\documentclass[reqno]{amsart}
\usepackage{amsmath, amsfonts, amsthm, amssymb, setspace, textcomp, bbm, multirow, hyperref, mathtools}
\usepackage{geometry}
\geometry{hmargin={1in},vmargin={1in}}

\pagestyle{plain}

\newtheorem{theorem}{Theorem}[section]

\newtheorem{corollary}[theorem]{Corollary}
\newtheorem{proposition}[theorem]{Proposition}

\theoremstyle{definition}

\theoremstyle{remark}

\numberwithin{equation}{section}

\newcommand{\Parans}[1]{\left(#1\right)}



\newcommand{\PieceTwo}[4]
{
	\left\{
   	\begin{array}{ll}
      	#1 & #3 \\
       	#2 & #4
     	\end{array}
	\right.
}
\newcommand{\aqprod}[3]{\Parans{#1;#2}_{#3}}

\newcommand{\Jac}[2]{\left(\frac{#1}{#2}\right)}

\newcommand{\TwoTwoMatrix}[4]
{
	\left(\begin{array}{cc}
		#1 & #2\\#3 & #4
	\end{array}\right)
}
\newcommand{\SmallMatrix}[4]{\begin{psmallmatrix}#1 &#2\\#3 &#4\end{psmallmatrix}}

\newcommand{\Floor}[1]{\left\lfloor #1 \right\rfloor}
\newcommand{\Fractional}[1]{\left\{#1\right\} }

\newcommand{\z}[2]{\zeta_{#1}^{#2}}
\newcommand{\tmu}{\widetilde{\mu}}
\newcommand{\SLTwo}{\mbox{SL}_2(\mathbb{Z})}

\author{CHRIS JENNINGS-SHAFFER}
\address{Department of Mathematics, Oregon State University\\
Corvallis, Oregon 97331, USA
\endgraf jennichr@math.oregonstate.edu}

\keywords{Number theory, partitions, ranks, rank differences, maass forms, modular forms}

\subjclass[2010]{Primary 11P81, 11P82}

\title{The generating function of the $M_2$-rank of partitions without repeated odd parts as a mock modular form}

\allowdisplaybreaks
\begin{document}

\allowdisplaybreaks

\begin{abstract}
By work of Bringmann, Ono, and Rhoades it is known that the generating function of the
$M_2$-rank of partitions without repeated odd parts 
is the so-called holomorphic part of a certain harmonic Maass form.
Here we improve the standing of this function as
a harmonic Maass form and show more can be done with this function.
In particular we show the related harmonic Maass form
transforms like the generating function for partitions without repeated odd
parts (which is a modular form).
We then use these improvements to determine formulas for the
rank differences modulo $7$. Additionally we give identities and formulas that
allow one to determine formulas for the rank differences modulo $c$, for any
$c>2$.
\end{abstract}

\maketitle

\section{Introduction}
\allowdisplaybreaks

To begin we recall that a partition of an integer is a non-increasing 
sequence of positive
integers that sum to $n$. For example the partitions of $5$ are
$5$, $4+1$, $3+2$, $3+1+1$, $2+2+1$, $2+1+1+1$, and $1+1+1+1+1$. It is standard
to let $p(n)$ denote the number of partitions of $n$. From our example we see
that $p(5)=7$. Partitions have a rich history in number theory
and combinatorics, going as far back as Euler. Today much emphasis is put on
Ramanujan's work with partitions and his related work with $q$-series. 

One point of interest are the congruences of Ramanujan,
\begin{align*}
	p(5n+4)&\equiv 0\pmod{5},\\
	p(7n+5)&\equiv 0\pmod{7},\\
	p(11n+6)&\equiv 0\pmod{11}.
\end{align*}
Dyson in \cite{Dyson} proposed the following combinatorial explanation of the
first two congruences. Given a partition of $n$, we define the rank of the
partition to be the largest part minus the number of parts. If we group the 
partitions of $5n+4$ according to the modulo $5$ value of their rank, then
it turns out we have five sets of equal size. If we group the 
partitions of $7n+5$ according to the modulo $7$ value of their rank, then
it turns out we have seven sets of equal size. These two statements were later proved by 
Atkin and Swinnerton-Dyer in \cite{AS}.

Another point of interest are the mock theta functions, which Ramanujan introduced
in his famous last letter to Hardy. Mock theta functions are the topic of 
Watson's final address as president of the London Mathematical Society
\cite{Watson}. We save the more technical discussion for the 
next section, but we give two of Ramanujan's examples:
\begin{align*}
	f(q)
	&=
		1+\frac{q}{(1+q)^2}+\frac{q^4}{(1+q)^2(1+q^2)^2}+\dots
	,
	\\
	\phi(q)
	&=
		1+\frac{q}{(1+q^2)}+\frac{q^4}{(1+q^2)(1+q^4)}+\dots
	.
\end{align*}
One of Ramanujan's mock theta conjectures (which was proved by Watson in 
\cite{Watson}) is that
$2\phi(-q)-f(q) = \vartheta_4(0,q)\prod_{n=1}^\infty (1+q^n)^{-1}$,
where $\vartheta_4(0,q)$ is a Jacobi theta function.

These two areas are actually strongly connected. In particular, we let
$N(m,n)$ denote the number of partitions of $n$ with rank $m$, 
and write the generating function as
\begin{align*}
	R(\zeta;q)
	&=
	\sum_{n=0}^\infty\sum_{m=-\infty}^\infty N(m,n) \zeta^mq^n
	.
\end{align*}
Of the seven functions commonly referred to as the third order mock theta 
functions, all seven can be expressed in terms of $R(\zeta;q)$ by 
replacing $q$ by a power of $q$, letting $\zeta$ be a power of $q$ times a
root of unity, and possibly adjusting by a constant and a power of $q$.
It is worth pointing out that it was Watson \cite{Watson} who observed the third order 
functions could be written in terms of the rank, and he did so nearly
a decade before the rank function was formally defined.
The fifth and seventh order mock theta functions can also be obtained in such
a manner, if one also allows adding on certain infinite products. 
Explicit versions of these statements can be found in Section 5 
of \cite{HickersonMortenson}.
For this reason $R(\zeta;q)$ is called a universal mock theta 
function. With this we see a strong understanding of $R(\zeta;q)$ leads to a better
understanding of the mock theta functions. 

Another universal mock theta function is $R2(\zeta;q)$, the generating function of 
the $M_2$-rank of partitions
without repeated odd parts, which appears among the tenth order mock
theta functions.
This also has an elegant combinatorial statement.
The $M_2$-rank of a partition without repeated odd parts is given by
taking the ceiling of the largest part divided by 2, and then subtracting the 
number of parts. 
This rank was introduced by Berkovich and Garvan in \cite{BerkovichGarvan}
and further studied by Lovejoy and Osburn in \cite{LovejoyOsburn2}.
We let $N_2(m,n)$ denote the number of partitions of $n$ without repeated odd parts
and $M_2$-rank $m$. We denote the generating function by
\begin{align*}
	R2(\zeta;q)
	&=
	\sum_{n=0}^\infty\sum_{m=-\infty}^\infty N_2(m,n) \zeta^m q^n.
\end{align*}

In this article we improve the understanding of the $M_2$-rank as the
holomorphic part of a harmonic Maass form. We determine a 
more precise formula for the transformation of the associated harmonic Maass 
form and do so on a 
larger group than previously known. 
To aid in determining dissection
formulas $R2(e^{\frac{2\pi i}{\ell}};q)=\sum_{r=0}^{\ell-1}q^rA_r(q^\ell)$, we 
find additional harmonic Maass forms with holomorphic parts corresponding
to the mock modular parts of the $A_r(q^\ell)$. Additionally we give lower
bounds on the orders of the cusps for the holomorphic parts of the harmonic
Maass forms.  

Partition ranks have been 
studied in terms of harmonic Maass forms in a number of recent works, 
perhaps the most influential being \cite{BringmannOno} and other
important works are \cite{AhlgrenTreneer, BringmannOnoRhoades, Dewar, Mao}
of which the second and third deal with the $M_2$-rank.
Recently Garvan \cite{Garvan3} has made remarkable improvements to the 
transformations and levels from \cite{BringmannOno}
for the harmonic Maass form related to $R(\zeta;q)$.
In Garvan's work, one sees that on a rather large subgroup, the harmonic
Maass form related to $R(\zeta;q)$ has the same multiplier as $\eta(\tau)^{-1}$,
which is the generating function for partitions. The same phenomenon
will occur here. That is to say the harmonic Maass form related to $R2(\zeta;q)$
will transform like $\frac{\eta(2\tau)}{\eta(4\tau)\eta(\tau)}$, which is
the generating function for partitions without repeated odd parts.
This also occurs with the Dyson rank of
overpartitions, $\overline{R}(\zeta;q)$, which the author studied in \cite{JenningsShaffer2}.
Although not stated explicitly in that article, one can easily verify that on a certain 
subgroup
the transformation formula for the Dyson rank of overpartitions agrees with
$\frac{\eta(2\tau)}{\eta(\tau)^2}$, the generating function for overpartitions.
The general explanation for this is that usually one can write the generating 
function of a rank function as the product of the generating function for the 
underlying partition function and a higher level Appell function. By the work of 
Zwegers in \cite{Zwegers2}, Appell functions have modular completions and these modular
completions have trivial multipliers on large subgroups of $\SLTwo$.
With an improved understanding of the $M_2$-rank, we give new identities for $R2(\zeta;q)$ when
$\zeta$ is set equal to certain roots of unity.
In the next section we formally state our definitions and results, and then
give an outline of the rest of the article. 

For the reader that wishes to compare the various appearances of the the three
rank functions mentioned above, we note the following. In the notation of
Gordon and McIntosh \cite{GordonMcintosh}, we have 
$R(\zeta;q)=h_3(\zeta,q)=(1-\zeta)(1+\zeta g_3(\zeta,q))$,
$\overline{R}(\zeta;q)=(1-\zeta)(1-\zeta^{-1})g_2(\zeta,q)$, and
$R2(\zeta;q)=h_2(\zeta,-q)$.
In the notation of Hickerson and Mortenson \cite{HickersonMortenson}, we have
$R(\zeta;q)=(1-\zeta)(1+\zeta g(\zeta,q))$,
$\overline{R}(\zeta;q)=(1-\zeta)(1-\zeta^{-1})h(\zeta,q)$, and
$R2(\zeta;q)=(1-\zeta)\zeta^{\frac12}k(\zeta^{\frac12},-q)$.

\section{Statement of Results}

As in the introduction we let $N_2(m,n)$ denote the number of partitions of $n$ without repeated odd parts 
with $M_2$-rank $m$. Furthermore we let $N_2(k,c,n)$ denote the number of 
partitions of $n$ without repeated odd parts with $M_2$-rank congruent to $k$ modulo $c$. 
To state out definitions and results
we use the standard product notation,
\begin{align*}
	\aqprod{\zeta}{q}{n} 
		&= \prod_{j=0}^{n-1} (1-\zeta q^j)
	,
	&\aqprod{\zeta}{q}{\infty} 
		&= \prod_{j=0}^\infty (1-\zeta q^j)
	,\\
	\aqprod{\zeta_1,\dots,\zeta_k}{q}{n} 
		&= \aqprod{\zeta_1}{q}{n}\dots\aqprod{\zeta_k}{q}{n}
	,
	&\aqprod{\zeta_1,\dots,\zeta_k}{q}{\infty} 
		&= \aqprod{\zeta_1}{q}{\infty}\dots\aqprod{\zeta_k}{q}{\infty}
	.	
\end{align*}
Also we let $q=\exp(2\pi i\tau)$ for $\tau\in \mathcal{H}$, that is $\mbox{Im}(\tau)>0$.
For $c$ a positive integer and $a$ an integer, we let
$\z{c}{a}=\exp(2\pi ia/c)$.

As noted in \cite{LovejoyOsburn2}, one may use Theorem 1.2 of \cite{Lovejoy1}
to find that the generating function for
$N_2(m,n)$ is given by
\begin{align*}
	R2(\zeta;q)
	&=
	\sum_{n=0}^\infty \sum_{m=-\infty}^\infty N_2(m,n)\zeta^mq^n
	=
	\sum_{n=0}^\infty
	\frac{q^{n^2}\aqprod{-q}{q^2}{n}}{\aqprod{\zeta q^2,\zeta^{-1}q^2}{q^2}{n}}
	\\
	&=
	\frac{\aqprod{-q}{q^2}{\infty}}{\aqprod{q^2}{q^2}{\infty}}
	\left(1+
		\sum_{n=1}^\infty \frac{ (1-\zeta)(1-\zeta^{-1})(-1)^nq^{2n^2+n}(1+q^{2n}) }
			{(1-\zeta q^{2n})(1-\zeta^{-1}q^{2n})}
	\right)	
	.
\end{align*}
Next for $a$ and $c$ integers, $c>0$, and $c\nmid 2a$, we define
\begin{align*}
	\mathcal{R}2(a,c;\tau)  
	&=
		q^{-\frac{1}{8}}R2(\z{c}{a};q)
	,\\
	\widetilde{\mathcal{R}2}(a,c;\tau)
		&=
		\mathcal{R}2(a,c;\tau)	
		-
		i(1-\z{c}{a})\z{2c}{-a}
		\left(
			e^{\frac{\pi i}{4}}	
			\int_{-\overline{\tau}}^{i\infty}			
			\frac{g_{\frac{3}{4}, \frac{1}{2}-\frac{2a}{c}} (4w) dw }{\sqrt{-i(w+\tau)}}					
			+
			e^{-\frac{\pi i}{4}}	
			\int_{-\overline{\tau}}^{i\infty}			
			\frac{g_{\frac{1}{4}, \frac{1}{2}-\frac{2a}{c}} (4w) dw }{\sqrt{-i(w+\tau)}}		
		\right)
,
\end{align*}
where $g_{a,b}(\tau)$ is a theta function of Zwegers \cite{Zwegers}, the definition
of which we give in the next section. The factor of $q^{-\frac{1}{8}}$ is necessary
for certain transformations to work correctly. We notice that it is immediately 
clear that
$\mathcal{R}2(a+c,c;\tau)=\mathcal{R}2(a,c;\tau)$;
it is also true that
$\widetilde{\mathcal{R}2}(a+c,c;\tau)=\widetilde{\mathcal{R}2}(a,c;\tau)$
and this follows immediately from properties of $g_{a,b}(\tau)$.

We will find $R2$, $\mathcal{R}2$, and $\widetilde{\mathcal{R}2}$ are related
to the following functions.
For $k$ and $c$ integers, $0\le k<c$, we define
\begin{align*}
S(k,c;\tau)
&=
	\frac{ q^{-c^2+4kc-2k^2+c-k} }
		{\aqprod{ (-1)^{c-1}q^{c^2},(-1)^{c-1}q^{3c^2},q^{4c^2}}{q^{4c^2}}{\infty}}	
	\sum_{n=-\infty}^\infty
		\frac{(-1)^{nc} q^{2c^2n^2+3c^2n} }
			{1-q^{4c^2n + (1+4k-c)c}}		
,		
\\
\mathcal{S}(k,c;\tau)
&=
	i^{c}q^{-\frac{1}{8}}S(k,c;\tau)	
,\\
\widetilde{\mathcal{S}}(k,c;\tau)
&=
		\mathcal{S}(k,c;\tau)
		-
		(-1)^k i^{1+c}e^{-\frac{\pi i}{4}}c
		\int_{-\overline{\tau}}^{i\infty}
		\frac{  g_{\frac{1+4k}{4c},\frac{c}{2}} (4c^2w)  }{\sqrt{-i(w+\tau)}}
		dw
.
\end{align*}
We note that $S(k,c;\tau)$ has a pole when $1+4k-c\equiv0\pmod{4c}$,
however this case does not occur in our usage of $S(k,c;\tau)$.
We will see in Section 3 that we can define the functions $\mathcal{S}(k,c;\tau)$
and $\widetilde{\mathcal{S}}(k,c;\tau)$ for all integer values of $k$, however
some care must be taken in doing so as the above definitions 
would not be the proper choice as it is not true that
$\mathcal{S}(k+c,c;\tau)=\mathcal{S}(k,c;\tau)$.

Our main theorem is the following. This theorem relates the $M_2$-rank generating
function to harmonic Maass forms and modular forms.

\begin{theorem}\label{TheoremModularity}
Suppose $a$ and $c$ are integers, $c>0$, and $c\nmid 2a$. Let $t=\frac{c}{\gcd(4,c)}$.
\begin{enumerate}
	\item $\widetilde{\mathcal{R}2}(a,c;8\tau)$ is a harmonic Maass form of weight
	$1/2$ on $\Gamma_0(256t^2)\cap\Gamma_1(8t)$ and $\mathcal{R}2(a,c;8\tau)$ is a mock modular 
	form.

	\item The function
	\begin{align*}
		\widetilde{\mathcal{R}2}(a,c;\tau) 
		- 
		i^{-c}(1-\z{c}{a})\z{c}{-a}\sum_{k=0}^{c-1} (-1)^k (\z{c}{-2ak} -\z{c}{2ak+a} )
		\widetilde{\mathcal{S}}(k,c;\tau)
	\end{align*}
	is holomorphic in $\tau$ and has at worst poles at the cusps. Furthermore,
	\begin{align*}		
		&
		\widetilde{\mathcal{R}2}(a,c;\tau) 
		- 
		i^{-c}(1-\z{c}{a})\z{c}{-a}\sum_{k=0}^{c-1} (-1)^k (\z{c}{-2ak} -\z{c}{2ak+a} )
		\widetilde{\mathcal{S}}(k,c;\tau)
		\\
		&=
		\mathcal{R}2(a,c;\tau) 
		- 
		i^{-c}(1-\z{c}{a})\z{c}{-a}\sum_{k=0}^{c-1} (-1)^k (\z{c}{-2ak} -\z{c}{2ak+a} )
		\mathcal{S}(k,c;\tau)
	.	
	\end{align*}

	\item The function
	\begin{align*}
		\frac{\eta(4\tau)\eta(\tau)}{\eta(2\tau)}\left(
			\mathcal{R}2(a,c;\tau) 
			- 
			i^{-c}(1-\z{c}{a})\z{c}{-a}\sum_{k=0}^{c-1} (-1)^k (\z{c}{-2ak} -\z{c}{2ak+a} )
			\mathcal{S}(k,c;\tau)
		\right)
	\end{align*}
	is a weakly holomorphic modular form of weight $1$ on $\Gamma_0(4c^2\cdot\gcd(c,2))\cap\Gamma_1(4c)$.

	\item If $c$ is odd, then the function
	\begin{align*}
		\frac{\eta(4c^2\tau)\eta(c^2\tau)}{\eta(2c^2\tau)}\left(
			\mathcal{R}2(a,c;\tau) 
			- 
			i^{-c}(1-\z{c}{a})\z{c}{-a}\sum_{k=0}^{c-1} (-1)^k (\z{c}{-2ak} -\z{c}{2ak+a} )
			\mathcal{S}(k,c;\tau)
		\right)
	\end{align*}
	is a weakly holomorphic modular form of weight $1$ on $\Gamma_0(4c^2)\cap\Gamma_1(4c)$.
\end{enumerate}
In the sums above, when $1+4k-c\equiv 0\pmod{4c}$ the summands
$(-1)^k(\z{c}{-2ak} -\z{c}{2ak+a})\mathcal{S}(k,c;\tau)$
and $(-1)^k(\z{c}{-2ak} -\z{c}{2ak+a})\widetilde{\mathcal{S}}(k,c;\tau)$
are taken to be zero.
\end{theorem}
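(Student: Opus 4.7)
The plan is to treat the four parts as a cascade built on Zwegers' theory \cite{Zwegers} of non-holomorphic completions of Appell--Lerch sums. Part (1) establishes the weight-$\frac12$ harmonic Maass form structure of $\widetilde{\mathcal{R}2}(a,c;8\tau)$, Part (2) identifies the non-holomorphic piece of $\widetilde{\mathcal{R}2}$ explicitly in terms of the auxiliary completions $\widetilde{\mathcal{S}}(k,c;\tau)$, and Parts (3) and (4) repackage the resulting holomorphic combination (which is already a weight-$\frac12$ weakly holomorphic modular form) as weight-$1$ modular forms by multiplication by an appropriate $\eta$-quotient. The key structural fact is that $R2(\zeta;q)$ factors as the modular form $\frac{\aqprod{-q}{q^2}{\infty}}{\aqprod{q^2}{q^2}{\infty}}$ times a level-$c$ Appell--Lerch sum.

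For Part (1), I would rewrite the bracketed sum in the hypergeometric expression for $R2(\zeta;q)$ as a specialization of Zwegers' $\mu$-function with arguments $u,v$ linear in $\tau$ and $a/c$. The completion $\widetilde{\mu}$ then contributes exactly the Eichler integrals of $g_{3/4,\,1/2-2a/c}(4w)$ and $g_{1/4,\,1/2-2a/c}(4w)$ that appear in the definition of $\widetilde{\mathcal{R}2}$, after matching parameters and combining the two complementary pieces of $\widetilde{\mu}$. Scaling $\tau\mapsto 8\tau$ absorbs the $q^{-1/8}$ factor and clears all half-integer exponents, and the harmonic Maass form transformation law then follows from the action of $\SLTwo$ on $\widetilde{\mu}$ combined with the multiplier of $\frac{\eta(2\tau)}{\eta(4\tau)\eta(\tau)}$. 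The precise level $\Gamma_0(256t^2)\cap\Gamma_1(8t)$ is obtained by tracking the congruence conditions on the root-of-unity arguments modulo $c$ together with the $\tau\mapsto 8\tau$ rescaling, and the mock modular form assertion for $\mathcal{R}2(a,c;8\tau)$ is immediate.

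For Part (2), the essential ingredient is a theta-decomposition identity writing the shifted $g$-functions appearing in the integrand of $\widetilde{\mathcal{R}2}$ as a $\mathbb{Z}/c$-sum over $k$ of the $g_{(1+4k)/(4c),\,c/2}(4c^2w)$ appearing in the $\widetilde{\mathcal{S}}(k,c;\tau)$, with coefficients $(-1)^k(\z{c}{-2ak}-\z{c}{2ak+a})$ and overall prefactor $i^{-c}(1-\z{c}{a})\z{c}{-a}$. I would prove this by splitting the summation index $n=cm+k$ with $0\le k<c$ in the defining series of $g_{a,b}$, evaluating the character at $\tfrac12-\tfrac{2a}{c}$ on each residue class, and recollecting the resulting subseries into the canonical form of $g_{(1+4k)/(4c),\,c/2}$; the $(-1)^k$ signs emerge from the sign changes in the reduction to Zwegers' canonical parameters. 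Once this identity is in hand, the Eichler integrals on both sides cancel and the subtraction $\widetilde{\mathcal{R}2}-i^{-c}(1-\z{c}{a})\z{c}{-a}\sum_k(\cdots)\widetilde{\mathcal{S}}$ reduces to its holomorphic parts, giving the second displayed identity. The degenerate case $1+4k-c\equiv0\pmod{4c}$ is handled by verifying that the coefficient $\z{c}{-2ak}-\z{c}{2ak+a}$ vanishes there, canceling the apparent pole of $\mathcal{S}(k,c;\tau)$ and justifying the stated convention.

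For Parts (3) and (4), after Part (2) the displayed combination is a weight-$\frac12$ weakly holomorphic modular form whose multiplier, by the introductory remarks and the analogy with Garvan's treatment of $R(\zeta;q)$ in \cite{Garvan3} and the author's treatment of $\overline{R}(\zeta;q)$ in \cite{JenningsShaffer2}, agrees with that of $\frac{\eta(2\tau)}{\eta(4\tau)\eta(\tau)}$ on a large congruence subgroup. Multiplying by $\frac{\eta(4\tau)\eta(\tau)}{\eta(2\tau)}$ kills the multiplier and produces a weight-$1$ weakly holomorphic modular form, and the level $\Gamma_0(4c^2\gcd(c,2))\cap\Gamma_1(4c)$ is extracted by tracking which matrices preserve the $q^{c^2}$-scaling of the Appell sum and fix the root-of-unity arguments modulo $c$. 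Cusp behavior follows from the hypergeometric expansion and the manifest form of each $\mathcal{S}(k,c;\tau)$. Part (4) refines this in the odd-$c$ case, where the alternative $\eta$-quotient $\frac{\eta(4c^2\tau)\eta(c^2\tau)}{\eta(2c^2\tau)}$ carries a compatible multiplier and sharpens the level to $\Gamma_0(4c^2)\cap\Gamma_1(4c)$. The main obstacle is the theta-decomposition identity in Part (2): assembling the correct signs, phases $e^{\pm\pi i/4}$, root-of-unity coefficients, and prefactor $i^{-c}(1-\z{c}{a})\z{c}{-a}$ is delicate bookkeeping, complicated by the parity of $c$, but once it is secured the remaining parts reduce to standard (if intricate) multiplier and level computations.
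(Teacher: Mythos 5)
Your proposal follows essentially the same route as the paper: express $R2(\zeta;q)$ as an eta-quotient times one of Zwegers' completed Appell/$\mu$-functions, compute the non-holomorphic part of $\widetilde{\mathcal{R}2}$ by splitting the summation index modulo $c$ so that it matches the non-holomorphic parts of the $\widetilde{\mathcal{S}}(k,c;\tau)$ (with the coefficient $\z{c}{-2ak}-\z{c}{2ak+a}$ vanishing exactly in the degenerate case), and then observe that both completions carry the multiplier of $\frac{\eta(2\tau)}{\eta(\tau)\eta(4\tau)}$ so that multiplying the holomorphic difference by the reciprocal eta-quotient yields a weight-$1$ form. The only cosmetic difference is that you favor $\widetilde{\mu}$ where the paper works with the level-two completed Appell function $\widehat{A}_2$ (a choice the paper itself notes is interchangeable), and your decomposition acts directly on the $g_{a,b}$-series rather than on the equivalent incomplete-Gamma expansion.
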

Part (1) of Theorem \ref{TheoremModularity} is not entirely new. In \cite{BringmannOnoRhoades}
Bringmann, Ono, and Rhoades studied various general harmonic Maass forms, one of
which has essentially $R2(\z{c}{a}, 8t^2 \tau)$ as the holomorphic part,
and the subgroup is $\Gamma_1(2^{10}t^4)$. However,
the purpose of that article was to establish, for the first time, that various families 
of functions are mock modular forms, rather than obtain a precise formula for the 
multipliers on large subgroups. 
Our construction of the 
harmonic Maass forms greatly differs from that of \cite{BringmannOnoRhoades}.
Additionally, Hickerson and Mortenson in \cite{HickersonMortenson2} considered 
the function
\begin{align*}
	D_2(a;M) &= \sum_{n=0}^\infty \left(N_2(a,M,n) - \tfrac{p_o(n)}{M}\right)q^n
,
\end{align*}
where $p_o(n)$ is the number of partitions without repeated odd parts. There they 
showed that $D_2(a;M)$ can be expressed as the sum of two Appell-Lerch sums, of a
form similar to $S(k,c;\tau)$, and a theta function.

In Section $5$ we give various formulas so that we may
determine the orders of these modular forms at cusps. With this we can then 
verify various identities for the $M_2$-rank function by passing to modular
forms and using the valence formula.
In particular one can use this to give an exact description of
the $c$-dissection of $R2(\z{c}{a};q)$ in terms of generalized Lambert 
series and modular forms. We give these identities for
$c=7$. Similar formulas were determined by Lovejoy
and Osburn for $c=3$ and $c=5$ in \cite{LovejoyOsburn2} and for
$c=6$ and $c=10$ by Mao in \cite{Mao2}. Rather than using harmonic Maass forms,
those formulas used the $q$-series techniques developed by Atkin and 
Swinnerton-Dyer \cite{AS} to determine rank difference formulas for the rank of 
partitions. Furthermore in \cite{HickersonMortenson2} Hickerson and Mortenson 
demonstrated how their identities for $D_2(a;M)$ can be used to prove the formulas
for $c=3$ and $c=5$. With this in mind, we see that there are at least three
different proof techniques for determining the $c$-dissection of
$R2(\z{c}{a};q)$. One might ask for the pros and cons of the approach
we use in this article. Our approach with harmonic Maass forms shares a
difficulty with the other two methods, which is that we must correctly guess the
identity before we can prove it. This approach with harmonic Maass forms helps
us to guess the identity in that we know the weight and level of the modular 
forms.

\begin{theorem}\label{TheoremDissection}
Let $\z{7}{}$ be a primitive seventh root of unity,
$J_a = \aqprod{q^a,q^{28-a}}{q^{28}}{\infty}$
for $1\le a\le 14$,
$J_0 = \aqprod{q^{28}}{q^{28}}{\infty}$, and
$A(m,n,r) = m\z{7}{}+n\z{7}{2}+r\z{7}{3}+r\z{7}{4}+n\z{7}{5}+m\z{7}{6}$.
Then
\begin{align*}
	R2(\z{7}{};q)
	&= 
		R2_0(q^7)+qR2_1(q^7)+q^2R2_2(q^7)+q^3R2_3(q^7)+q^4R2_4(q^7)+q^5R2_5(q^7)+q^6R2_6(q^7)	
,
\end{align*}
where each $R2_i(q)$ can be written as a sum of terms with $S(k,7;\frac{\tau}{7})$
and quotients of $J_a$. Due to the complexity of the expressions, we only state
the definition of $R2_0(q)$ here and give the definitions of the other $R2_i(q)$
in Section 6. We have
\begin{align*}
&R2_0(q)
=
	A(3,2,2)S(0,7,\tau/7)
	+A(3,2,2)S(3,7,\tau/7)
	+\tfrac{A(-51,-25,-35)q^{-1}J_{0}J_{6}^{2}J_{7}J_{8}^{2}}
		{J_{1}^{2}J_{2}^{2}J_{3}^{2}J_{4}^{2}J_{5}^{2}J_{10}^{2}J_{11}^{2}J_{12}J_{13}^{2}J_{14}^{2}}
	+\tfrac{A(69,34,44)q^{-1}J_{0}J_{6}^{2}J_{7}^{2}J_{8}}
		{J_{1}^{2}J_{2}^{2}J_{3}^{2}J_{4}^{2}J_{5}^{2}J_{9}^{2}J_{11}J_{12}^{2}J_{13}^{2}J_{14}^{2}}
	\\&\quad
	+\tfrac{A(27,19,20)q^{-1}J_{0}J_{6}^{2}J_{7}^{2}J_{8}}
		{J_{1}^{2}J_{2}^{2}J_{3}^{2}J_{4}^{2}J_{5}^{2}J_{9}J_{10}J_{11}^{2}J_{12}^{2}J_{13}^{2}J_{14}}
	+\tfrac{A(28,14,22)q^{-1}J_{0}J_{6}J_{7}J_{8}J_{10}}
		{J_{1}^{2}J_{2}^{2}J_{3}^{2}J_{4}^{2}J_{5}J_{9}J_{11}^{2}J_{12}^{2}J_{13}^{2}J_{14}^{2}}
	+\tfrac{A(-45,-29,-27)q^{-1}J_{0}J_{6}J_{7}^{2}J_{8}}
		{J_{1}^{2}J_{2}^{2}J_{3}^{2}J_{4}^{2}J_{5}J_{9}^{2}J_{10}J_{11}^{2}J_{13}^{2}J_{14}^{2}}
	\\&\quad
	+\tfrac{A(-39,-22,-24)q^{-1}J_{0}J_{6}^{2}J_{8}J_{9}}
		{J_{1}^{2}J_{2}^{2}J_{3}^{2}J_{4}^{2}J_{5}J_{7}J_{11}^{2}J_{12}^{2}J_{13}^{2}J_{14}^{2}}
	+\tfrac{A(48,21,33)q^{-1}J_{0}J_{6}^{2}J_{8}}
		{J_{1}^{2}J_{2}^{2}J_{3}^{2}J_{4}^{2}J_{5}J_{9}J_{11}^{2}J_{12}^{2}J_{13}J_{14}^{2}}
	+\tfrac{A(15,8,6)q^{-1}J_{0}J_{6}^{2}J_{8}}
		{J_{1}^{2}J_{2}^{2}J_{3}^{2}J_{4}^{2}J_{5}J_{10}^{2}J_{11}^{2}J_{13}^{2}J_{14}^{2}}
	+\tfrac{A(-25,-10,-20)q^{-1}J_{0}J_{6}^{2}J_{8}^{2}}
		{J_{1}^{2}J_{2}^{2}J_{3}^{2}J_{4}^{2}J_{5}J_{9}^{2}J_{10}^{2}J_{12}J_{13}^{2}J_{14}^{2}}
	\\&\quad
	+\tfrac{A(-90,-43,-61)q^{-1}J_{0}J_{6}^{2}J_{7}}
		{J_{1}^{2}J_{2}^{2}J_{3}^{2}J_{4}^{2}J_{5}J_{9}^{2}J_{11}J_{12}J_{13}^{2}J_{14}^{2}}
	+\tfrac{A(14,1,9)q^{-1}J_{0}J_{6}^{2}J_{7}}
		{J_{1}^{2}J_{2}^{2}J_{3}^{2}J_{4}^{2}J_{5}J_{9}J_{10}J_{11}^{2}J_{12}J_{13}^{2}J_{14}}
	+\tfrac{A(39,23,29)q^{-1}J_{0}J_{6}^{2}J_{7}J_{8}}
		{J_{1}^{2}J_{2}^{2}J_{3}^{2}J_{4}^{2}J_{5}J_{9}^{2}J_{10}^{2}J_{11}^{2}J_{13}J_{14}^{2}}
	\\&\quad
	+\tfrac{A(-52,-32,-33)q^{-1}J_{0}J_{6}^{2}J_{7}J_{8}}
		{J_{1}^{2}J_{2}^{2}J_{3}^{2}J_{4}^{2}J_{5}J_{9}^{2}J_{10}^{2}J_{11}J_{12}^{2}J_{13}^{2}}
	+\tfrac{A(3,4,0)J_{0}J_{6}^{2}J_{7}J_{8}^{2}}
		{J_{1}^{2}J_{2}^{2}J_{3}^{2}J_{4}^{2}J_{5}J_{9}J_{10}^{2}J_{11}^{2}J_{12}J_{13}^{2}J_{14}^{2}}
	+\tfrac{A(3,3,1)q^{-1}J_{0}J_{6}^{2}J_{7}^{2}}
		{J_{1}^{2}J_{2}^{2}J_{3}^{2}J_{4}^{2}J_{5}J_{8}J_{9}^{2}J_{11}^{2}J_{12}^{2}J_{13}^{2}}
	\\&\quad
	+\tfrac{A(-1,-3,1)J_{0}J_{6}^{2}J_{7}^{2}J_{8}}
		{J_{1}^{2}J_{2}^{2}J_{3}^{2}J_{4}^{2}J_{5}J_{9}^{2}J_{10}J_{11}^{2}J_{12}^{2}J_{13}^{2}J_{14}}
	+\tfrac{A(3,2,2)q^{-1}J_{0}J_{7}J_{8}^{2}}
		{J_{1}^{2}J_{2}^{2}J_{3}^{2}J_{4}^{2}J_{9}^{2}J_{10}J_{11}^{2}J_{12}J_{13}^{2}J_{14}}
	+\tfrac{A(27,14,20)q^{-1}J_{0}J_{6}J_{8}^{2}}
		{J_{1}^{2}J_{2}^{2}J_{3}^{2}J_{4}^{2}J_{7}J_{9}J_{10}J_{11}J_{12}J_{13}^{2}J_{14}^{2}}
	\\&\quad
	+\tfrac{A(-31,-13,-21)q^{-1}J_{0}J_{6}J_{8}^{2}}
		{J_{1}^{2}J_{2}^{2}J_{3}^{2}J_{4}^{2}J_{7}J_{10}^{2}J_{11}^{2}J_{12}J_{13}^{2}J_{14}}
	+\tfrac{A(36,24,20)q^{-1}J_{0}J_{6}J_{10}}
		{J_{1}^{2}J_{2}^{2}J_{3}^{2}J_{4}^{2}J_{9}J_{11}^{2}J_{12}J_{13}^{2}J_{14}^{2}}
	+\tfrac{A(-8,-3,-7)q^{-1}J_{0}J_{6}J_{8}}
		{J_{1}^{2}J_{2}^{2}J_{3}^{2}J_{4}^{2}J_{9}^{2}J_{11}J_{12}^{2}J_{13}^{2}J_{14}}
	+\tfrac{A(26,17,20)q^{-1}J_{0}J_{6}J_{8}}
		{J_{1}^{2}J_{2}^{2}J_{3}^{2}J_{4}^{2}J_{9}J_{10}J_{11}^{2}J_{12}^{2}J_{13}^{2}}
	\\&\quad
	+\tfrac{A(39,22,22)q^{-1}J_{0}J_{6}J_{8}^{2}}
		{J_{1}^{2}J_{2}^{2}J_{3}^{2}J_{4}^{2}J_{9}^{2}J_{10}^{2}J_{11}^{2}J_{12}J_{13}J_{14}}
	+\tfrac{A(0,-1,1)q^{-1}J_{0}J_{6}J_{7}J_{10}^{2}}
		{J_{1}^{2}J_{2}^{2}J_{3}^{2}J_{4}^{2}J_{8}J_{9}^{2}J_{11}^{2}J_{12}^{2}J_{13}^{2}J_{14}}
	+\tfrac{A(-3,0,-4)q^{-1}J_{0}J_{6}J_{7}J_{12}}
		{J_{1}^{2}J_{2}^{2}J_{3}^{2}J_{4}^{2}J_{9}^{2}J_{10}J_{11}^{2}J_{13}^{2}J_{14}^{2}}
	+\tfrac{A(0,-1,1)q^{-1}J_{0}J_{6}^{2}J_{9}}
		{J_{1}^{2}J_{2}^{2}J_{3}^{2}J_{4}^{2}J_{7}^{2}J_{11}^{2}J_{12}J_{13}^{2}J_{14}^{2}}
	\\&\quad
	+\tfrac{A(-24,-16,-16)q^{-1}J_{0}J_{6}^{2}J_{8}}
		{J_{1}^{2}J_{2}^{2}J_{3}^{2}J_{4}^{2}J_{7}^{2}J_{9}J_{12}^{2}J_{13}^{2}J_{14}^{2}}
	+\tfrac{A(24,16,16)q^{-1}J_{0}J_{6}^{2}J_{8}^{2}}
		{J_{1}^{2}J_{2}^{2}J_{3}^{2}J_{4}^{2}J_{7}^{2}J_{9}J_{10}^{2}J_{11}J_{12}J_{13}J_{14}^{2}}
	+\tfrac{A(0,1,-1)q^{-1}J_{0}J_{6}^{2}J_{10}^{2}}
		{J_{1}^{2}J_{2}^{2}J_{3}^{2}J_{4}^{2}J_{7}J_{8}J_{9}J_{11}J_{12}^{2}J_{13}^{2}J_{14}^{2}}
	\\&\quad
	+\tfrac{A(-1,1,0)q^{-1}J_{0}J_{6}^{2}J_{10}}
		{J_{1}^{2}J_{2}^{2}J_{3}^{2}J_{4}^{2}J_{7}J_{8}J_{11}^{2}J_{12}^{2}J_{13}^{2}J_{14}}
	+\tfrac{A(-3,-5,0)q^{-1}J_{0}J_{6}^{2}J_{8}}
		{J_{1}^{2}J_{2}^{2}J_{3}^{2}J_{4}^{2}J_{7}J_{9}^{2}J_{10}^{2}J_{13}^{2}J_{14}^{2}}
	+\tfrac{A(-3,-3,-1)q^{-1}J_{0}J_{6}^{2}J_{10}}
		{J_{1}^{2}J_{2}^{2}J_{3}^{2}J_{4}^{2}J_{8}J_{9}^{2}J_{11}^{2}J_{12}^{2}J_{13}J_{14}}
	+\tfrac{A(-3,-3,-2)qJ_{0}J_{6}^{2}J_{7}J_{8}^{2}}
		{J_{1}^{2}J_{2}^{2}J_{3}^{2}J_{4}^{2}J_{9}^{2}J_{10}^{2}J_{11}^{2}J_{12}J_{13}^{2}J_{14}^{2}}
	\\&\quad
	+\tfrac{A(-24,-15,-17)q^{-1}J_{0}J_{5}J_{8}^{2}}
		{J_{1}^{2}J_{2}^{2}J_{3}^{2}J_{4}^{2}J_{7}^{2}J_{11}^{2}J_{12}J_{13}^{2}J_{14}^{2}}
	+\tfrac{A(0,-1,1)q^{-1}J_{0}J_{5}J_{8}^{2}J_{12}}
		{J_{1}^{2}J_{2}^{2}J_{3}^{2}J_{4}^{2}J_{7}J_{9}J_{10}^{2}J_{11}^{2}J_{13}^{2}J_{14}^{2}}
	+\tfrac{A(0,-1,1)q^{-1}J_{0}J_{5}J_{8}}
		{J_{1}^{2}J_{2}^{2}J_{3}^{2}J_{4}^{2}J_{9}^{2}J_{10}J_{11}^{2}J_{13}^{2}J_{14}}
	+\tfrac{A(0,1,-1)q^{-1}J_{0}J_{5}J_{6}J_{10}^{2}}
		{J_{1}^{2}J_{2}^{2}J_{3}^{2}J_{4}^{2}J_{8}^{2}J_{9}^{2}J_{11}^{2}J_{12}J_{13}^{2}J_{14}}
	\\&\quad
	+\tfrac{A(-4,0,-3)q^{2}J_{0}J_{5}^{2}J_{6}^{2}J_{8}}
		{J_{1}^{2}J_{2}^{2}J_{3}^{2}J_{4}^{2}J_{7}J_{9}^{2}J_{11}^{2}J_{12}^{2}J_{13}^{2}J_{14}^{2}}
	+\tfrac{A(-54,-27,-37)q^{-1}J_{0}J_{7}J_{8}^{2}}
		{J_{1}^{2}J_{2}^{2}J_{3}^{2}J_{4}J_{5}^{2}J_{11}^{2}J_{12}^{2}J_{13}^{2}J_{14}^{2}}
	+\tfrac{A(38,18,26)q^{-1}J_{0}J_{7}^{2}J_{8}^{2}}
		{J_{1}^{2}J_{2}^{2}J_{3}^{2}J_{4}J_{5}^{2}J_{9}J_{10}^{2}J_{11}^{2}J_{13}^{2}J_{14}^{2}}
	\\&\quad
	+\tfrac{A(51,25,35)q^{-1}J_{0}J_{6}J_{8}^{2}J_{9}^{2}}
		{J_{1}^{2}J_{2}^{2}J_{3}^{2}J_{4}J_{5}^{2}J_{7}J_{10}J_{11}^{2}J_{12}^{2}J_{13}^{2}J_{14}^{2}}
	+\tfrac{A(-8,-4,-6)q^{-1}J_{0}J_{6}J_{8}^{2}}
		{J_{1}^{2}J_{2}^{2}J_{3}^{2}J_{4}J_{5}^{2}J_{10}J_{11}^{2}J_{12}^{2}J_{13}J_{14}^{2}}
	+\tfrac{A(-26,-13,-18)q^{-1}J_{0}J_{6}J_{7}^{2}J_{8}}
		{J_{1}^{2}J_{2}^{2}J_{3}^{2}J_{4}J_{5}^{2}J_{9}^{2}J_{10}^{2}J_{11}^{2}J_{12}J_{13}J_{14}}
	.
\end{align*}
\end{theorem}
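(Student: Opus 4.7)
The plan is to deduce Theorem \ref{TheoremDissection} from Theorem \ref{TheoremModularity}(4) by combining a 7-dissection with the valence formula. The $S(k,7;\tau/7)$ terms appearing on the right-hand side are forced because they carry the non-holomorphic shadow of $\mathcal{R}2(1,7;\tau)$ after one sieves by arithmetic progressions modulo $7$; once the shadows cancel, the remaining identity lives entirely in the space of weakly holomorphic modular forms of weight one on an explicit congruence subgroup, where it can be verified by comparing finitely many $q$-coefficients.

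Concretely, I would first apply Theorem \ref{TheoremModularity}(4) with $c=7$, $a=1$ to conclude that
\[
\frac{\eta(196\tau)\eta(49\tau)}{\eta(98\tau)}\left(\mathcal{R}2(1,7;\tau) - i^{-7}(1-\zeta_7)\zeta_7^{-1}\sum_{k=0}^{6}(-1)^k(\zeta_7^{-2k}-\zeta_7^{2k+1})\mathcal{S}(k,7;\tau)\right)
\]
is a weakly holomorphic modular form of weight one on $\Gamma_0(196)\cap\Gamma_1(28)$. For each $r\in\{0,1,\dots,6\}$ I would then isolate the $r$-th arithmetic-progression piece using the sieve $f(q)\mapsto \frac{1}{7}\sum_{j=0}^{6}\zeta_7^{-jr}f(\zeta_7^{j}q)$, equivalently averaging over the shifts $\tau\mapsto\tau+j/7$, which maps weakly holomorphic modular forms on $\Gamma_0(196)\cap\Gamma_1(28)$ into weakly holomorphic modular forms on a higher-level congruence subgroup. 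A direct examination of the $q$-series definition of $S(k,c;\tau)$ shows that the sieved image of each $\mathcal{S}(k,7;\tau)$ is a short $\mathbb{Q}(\zeta_7)$-linear combination of functions of the form $S(k',7;\tau/7)$, and one verifies that the weighted total matches exactly the $S$-part appearing in the formula defining $R2_r(q)$. The remaining piece is then itself a weakly holomorphic modular form, which I would express as a $\mathbb{Z}[\zeta_7]$-linear combination of the eta quotients built from $J_a=(q^a,q^{28-a};q^{28})_\infty$, using the order-at-cusps formulas developed in Section 5 to enumerate the admissible quotients.

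The main obstacle is the final numerical verification. Both guessing the correct $\mathbb{Z}[\zeta_7]$-linear combination of $J$-quotients and then proving the resulting identity via the valence formula require careful accounting of cusp orders at every cusp of the relevant congruence subgroup, and there are many of them. Multiplying through by a fixed eta quotient large enough to clear all poles turns the identity into one between holomorphic modular forms, after which the valence formula supplies a bound $B$ on the number of $q$-coefficients that need to be checked. The bookkeeping is extensive but finite; Galois symmetry over $\mathbb{Q}(\zeta_7)$ cuts the number of independent identities down, and with the Section 5 cusp-order formulas in hand the remaining work reduces to a routine if lengthy machine computation.
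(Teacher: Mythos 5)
Your proposal is correct and follows the same core strategy as the paper: use Theorem \ref{TheoremModularity} to cancel the non-holomorphic parts of $\mathcal{R}2(1,7;\tau)$ against those of the $\mathcal{S}(k,7;\tau)$, land in a space of weakly holomorphic weight-one modular forms, and then verify the claimed identity with the cusp-order bounds of Section 5 plus the valence formula and a finite $q$-expansion check. The differences are organizational. The paper multiplies by $\frac{\eta(4\tau)\eta(\tau)}{\eta(2\tau)}$ (part (3) rather than your part (4) -- both give weight one on $\Gamma_0(196)\cap\Gamma_1(28)$ since $c=7$ is odd), invokes Corollary \ref{CorollaryProductsModular} to see that the $J_a$-quotient side, written in terms of the $f_{196,7k}$, transforms with the \emph{same} multiplier on that group (a point your sketch leaves implicit but which must be checked), and then verifies the single combined identity directly: it divides $LHS-RHS$ by one eta quotient $g_1$ to get a modular function on $\Gamma_0(196)\cap\Gamma_1(28)$, bounds the cusp orders below by $-522$, and checks vanishing of coefficients through $q^{523}$. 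Your extra step of sieving into residue classes modulo $7$ is sound but unnecessary: the right-hand side is already the $7$-dissection, each $\mathcal{S}(k,7;\tau)$ is supported on a single class modulo $7$ so its sieve is itself or zero (not a genuine linear combination of several $S(k',7;\tau/7)$), and sieving pushes the verification onto a higher-level subgroup with many more cusps and a larger coefficient bound, which is exactly the bookkeeping the paper's direct, one-shot valence-formula argument avoids.
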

It is trivial to verify that the $q^{\alpha}S(k,7;\tau/7)$ appearing in the definitions of 
the $R2_i(q)$ are all series with integral  powers of $q$.
One advantage to writing the identity in this form is that we can also read off
formulas involving the rank differences,
\begin{align*}
	R2_{r,s,c}(d;q)
	&=
	\sum_{n=0}^\infty \left(
		N_2(r,c,cn+d) - N_2(s,c,cn+d)
	\right)q^n
	.
\end{align*}
To explain this, we let $\zeta_c=\zeta_c^1$ and use that
$1+\z{c}{}+\z{c}{2}+\dots+\z{c}{c-1}=0$
to find that
\begin{align*}
	R2(\z{c}{};q)
	&=
		\sum_{n=0}^\infty
		\sum_{r=0}^{c-1} N_2(r,c,n)\z{c}{r} q^n
	=
		\sum_{r=1}^{c-1}
		\z{c}{r}		
		\sum_{d=0}^{c-1}		
		q^dR2_{r,0,c}(d,q^c)
	.
\end{align*}
Since $N_2(m,n)=N_2(-m,n)$, we know $R2_{r,0,c}(d,q^c)=R2_{c-r,0,c}(d,q^c)$.
When $c$ is odd we have
\begin{align*}	
	R2(\z{c}{};q)
	&=
		\sum_{r=1}^\frac{c-1}{2}
		(\z{c}{r}+\z{c}{c-r})	
		\sum_{d=0}^{c-1}
		q^d
		R2_{r,0,c}(d;q^c)	
,
\end{align*}
whereas for even $c$ we have
\begin{align*}	
	R2(\z{c}{};q)
	&=
		\z{c}{\frac{c}{2}} \sum_{d=0}^{c-1} q^d R2_{\frac{c}{2},0,c}(d;q^c)	
		+
		\sum_{r=1}^\frac{c-2}{2}
		(\z{c}{r}+\z{c}{c-r})	
		\sum_{d=0}^{c-1}
		q^d
		R2_{r,0,c}(d;q^c)	
.
\end{align*}
In the case of $c$ being an odd prime, we have that
$\z{c}{}$, $\z{c}{2}$, $\dots$, $\z{c}{c-1}$ are linearly independent 
over $\mathbb{Q}$, so if
\begin{align*}
	R2(\z{c}{};q)
	&=
	\sum_{r=1}^{\frac{c-1}{2}}
	(\z{c}{r}+\z{c}{c-r})	
	\sum_{d=0}^{c-1}
		q^d S_r(d;q^{c})
,	
\end{align*}
and each $S_r(d;q)$ is a series in $q$ with rational coefficients, then
$S_r(d;q)=R2_{r,0,c}(d;q)$.
As an example, from the formula for $R2_0(q)$, we have that
\begin{align*}
	&R2_{1,0,7}(0;q)
	=
	3S(0,7,\tau/7)
	+3S(3,7,\tau/7)
	-\tfrac{51q^{-1}J_{0}J_{6}^{2}J_{7}J_{8}^{2}}
		{J_{1}^{2}J_{2}^{2}J_{3}^{2}J_{4}^{2}J_{5}^{2}J_{10}^{2}J_{11}^{2}J_{12}J_{13}^{2}J_{14}^{2}}
	+\tfrac{69q^{-1}J_{0}J_{6}^{2}J_{7}^{2}J_{8}}
		{J_{1}^{2}J_{2}^{2}J_{3}^{2}J_{4}^{2}J_{5}^{2}J_{9}^{2}J_{11}J_{12}^{2}J_{13}^{2}J_{14}^{2}}
	\\&\quad
	+\tfrac{27q^{-1}J_{0}J_{6}^{2}J_{7}^{2}J_{8}}
		{J_{1}^{2}J_{2}^{2}J_{3}^{2}J_{4}^{2}J_{5}^{2}J_{9}J_{10}J_{11}^{2}J_{12}^{2}J_{13}^{2}J_{14}}
	+\tfrac{28q^{-1}J_{0}J_{6}J_{7}J_{8}J_{10}}
		{J_{1}^{2}J_{2}^{2}J_{3}^{2}J_{4}^{2}J_{5}J_{9}J_{11}^{2}J_{12}^{2}J_{13}^{2}J_{14}^{2}}
	-\tfrac{45q^{-1}J_{0}J_{6}J_{7}^{2}J_{8}}
		{J_{1}^{2}J_{2}^{2}J_{3}^{2}J_{4}^{2}J_{5}J_{9}^{2}J_{10}J_{11}^{2}J_{13}^{2}J_{14}^{2}}
	+ \cdots .
\end{align*}
We note it is in fact more convenient to verify the identities for the 
$R2_{r,0,7}(d;q)$, as then we are working with integer coefficients rather
than coefficients from $\mathbb{Z}[\zeta_7]$.
When $c$ is composite, we can also deduce an identity based on the 
minimal polynomial for $\z{c}{}$. 

The rest of the article is organized as follows. In Section 3 
we recall the basics of modular and harmonic Maass forms and
introduce
the functions studied by Zwegers in \cite{Zwegers2, Zwegers}; these functions allow us to
relate our functions to harmonic Maass forms. In Section 4 we work out the
transformation formulas for our functions and prove Theorem
\ref{TheoremModularity}. In Section 5 we give formulas for
the orders at cusps. In Section 6 we use the results of Sections 4 and 5
to prove Theorem \ref{TheoremDissection}.

\section{Modular Forms, Harmonic Maass Forms, and Zwegers' Functions}

We first recall some basic terminology and results for modular and Maass forms.
For further details, one may consult \cite{BringmannOno, Ono1, Rankin, Zwegers}. 
We have that $\SLTwo$ is the multiplicative group of $2\times 2$ integer matrices with 
determinant $1$. The principal congruence subgroup of level $N$ is 
\begin{align*}
	\Gamma(N) &= 
	\left\{
		\TwoTwoMatrix{\alpha}{\beta}{\gamma}{\delta}\in \SLTwo
		:
		\alpha\equiv\delta\equiv 1\pmod{N}, 
		\beta\equiv\gamma\equiv 0\pmod{N}
	\right\}.
\end{align*}
A subgroup $\Gamma$ of $\SLTwo$ is called a congruence subgroup
if $\Gamma\supseteq\Gamma(N)$ for some $N$. Two congruence subgroups we
use are
\begin{align*}
	\Gamma_0(N) &= 
	\left\{
		\TwoTwoMatrix{\alpha}{\beta}{\gamma}{\delta}\in \SLTwo
		: 
		\gamma\equiv 0\pmod{N}
	\right\}
	,\\
	\Gamma_1(N) &= 
	\left\{
		\TwoTwoMatrix{\alpha}{\beta}{\gamma}{\delta}\in \SLTwo
		: 
		\alpha\equiv\delta\equiv 1\pmod{N}, 		
		\gamma\equiv 0\pmod{N}
	\right\}
.
\end{align*}
We recall $\SLTwo$ acts on $\mathcal{H}$ via Mobius transformations, that is
if $B=\SmallMatrix{\alpha}{\beta}{\gamma}{\delta}$ then
$B\tau=\frac{\alpha\tau+\beta}{\gamma\tau+\delta}$.
Additionally we let $j(B,\tau)=\gamma\tau+\delta$. One can verify that
$j(BC,\tau)=j(B,C\tau)\cdot j(C,\tau)$.

We recall a weakly holomorphic modular form of integral weight $k$
on a congruence subgroup $\Gamma$ of $\SLTwo$ is a holomorphic
function $f$ on $\mathcal{H}$ such that
\begin{enumerate}
\item if $B=\SmallMatrix{\alpha}{\beta}{\gamma}{\delta}\in\Gamma$, then
$f(B\tau)=j(B,\tau)^kf(\tau)$,
\item if $B\in\SLTwo$ then $j(B,\tau)^{-k}f(B\tau)$ has an expansion of the form
$\sum_{n=n_0}^\infty a_n\exp(2\pi in\tau/N)$, where $n_0\in\mathbb{Z}$ and $N$ is a positive integer.
\end{enumerate}
When $k$ is a half integer, we require $\Gamma\subset\Gamma_0(4)$ and replace 
$(1)$ with
$f(A\tau)=\Jac{\gamma}{\delta}^{2k}\epsilon(\delta)^{-2k}(\gamma\tau+\delta)^kf(\tau)$.
Here $\Jac{m}{n}$ is the Jacobi symbol extended to all integers $n$ by
\begin{align*}
	\Jac{0}{\pm 1} &= 1
	,&
	\Jac{m}{n}
	&=
		\PieceTwo{\Jac{m}{|n|}}
		{-\Jac{m}{|n|}}
		{\mbox{ if } m>0 \mbox{ or } n > 0,   }
		{\mbox{ if } m < 0 \mbox{ and } n < 0,}	
\end{align*}
and $\epsilon(\delta)$ is $1$ when $\delta\equiv 1\pmod{4}$ and is $i$ otherwise.
For a modular form with multiplier, we replace $(1)$ with
$f(B\tau)=\psi(B) j(B,\tau)^k f(\tau)$ where $|\psi(B)|=1$.

A harmonic weak Maass form satisfies the transformation law in $(1)$.
The condition of holomorphic is replaced with being smooth and 
annihilated by the weight $k$ hyperbolic Laplacian operator,
\begin{align*}
	\Delta_k
	&= 
	-y^2\left( \frac{\partial^2}{\partial x^2}+\frac{\partial^2}{\partial y^2}  \right)
	+
	iky\left( \frac{\partial}{\partial x}+i\frac{\partial}{\partial y}  \right)
,
\end{align*}
where here and throughout the article $\tau=x+iy$.
We note that with
$\frac{\partial}{\partial\tau} 
= \frac{1}{2}(\frac{\partial}{\partial x}-i\frac{\partial}{\partial y})$
and
$\frac{\partial}{\partial\overline{\tau}} 
= \frac{1}{2}(\frac{\partial}{\partial x}+i\frac{\partial}{\partial y})$,
we have
$\Delta_k = -4y^{2-k}\frac{\partial}{\partial\tau}y^k 
\frac{\partial}{\partial\overline{\tau}}$.
Condition $(2)$ is replaced with  $j(B,\tau)^{-k}f(B\tau)$ having at most linear
exponential growth as $y\rightarrow \infty$ in the form that there exists a polynomial
$P_B(\tau)\in\mathbb{C}[q^{-\frac{1}{N}}]$ such that
$j(B,\tau)^{-k}f(B\tau)-P_B(\tau)=O(e^{-\epsilon y})$, with $\epsilon>0$, as 
$y\rightarrow \infty$.

If $f$ is a harmonic weak Maass form of weight $2-k$, with $k>1$, on $\Gamma_1(N)$, 
then $f$
can be written as $f=f^{+} + f^-$, where $f^+$ and $f^-$ have expansions of the
form
\begin{align*}
	f^+(\tau)
	&=
	\sum_{n=n_0}^\infty a(n)q^n
	,&
	f^-(\tau)
	&=
	\sum_{n=1}^\infty b(n) \Gamma(k-1, 4\pi ny) q^{-n}
.
\end{align*}
Here $\Gamma$ is the incomplete Gamma function given by
$\Gamma(\alpha,\beta) = \int_\beta^\infty e^{-t} t^{\alpha-1}dt$.
We call $f^+$ the holomorphic part and $f^-$ the non-holomorphic part. The 
non-holomorphic part is often written instead as an integral of the form
\begin{align*}
	f^-(\tau)
	&=
	\int_{-\overline{\tau}}^\infty
	g(w) (-i(w+\tau))^{k-2} dw
	,
\end{align*}
where $g=\xi_{2-k}(f)$ and 
$\xi_k=2iy^k\overline{\frac{\partial}{\partial\overline{\tau}}}$.
When $f$ is a harmonic Maass form on some other congruence subgroup, similar 
expansions exist, but with $q$ replaced by a fractional power of $q$.

We now recall the various functions needed in working with harmonic Maass forms.
For $u,v,z\in \mathbb{C}$, $\tau\in\mathcal{H}$,
$u,v\not\in\mathbb{Z}+\tau\mathbb{Z}$, and $\ell$ a positive integer we let
\begin{align*}
	\vartheta(z;\tau)
	&=
		\sum_{n\in \mathbb{Z}+\frac{1}{2}}
		e^{ \pi in^2\tau + 2\pi in\left(z+\frac{1}{2}\right) }
	=
		-iq^{\frac18}e^{-\pi iz}\aqprod{e^{2\pi iz},e^{-2\pi iz}q,q}{q}{\infty}
	,\\
	\mu(u,v;\tau)
	&=
		\frac{e^{\pi iu}}{\vartheta(v;\tau)}
		\sum_{n=-\infty}^\infty
		\frac{ (-1)^n e^{\pi in(n+1)\tau + 2\pi inv} }
			{ 1 - e^{2\pi in\tau + 2\pi iu} }
	,\\
	A_{\ell}(u,v;\tau)
	&=
		e^{ \pi i\ell u }
		\sum_{n=-\infty}^\infty
		\frac{ (-1)^{\ell n} e^{ \pi i \ell n(n+1)\tau + 2\pi inv} }
			{ 1 - e^{ 2\pi in\tau + 2\pi iu } }
.
\end{align*}
With
$a=\mbox{Im}(z)/y$, we define
\begin{align*}
	E(z) 
	&=
		2\int_0^z e^{-\pi w^2}dw
	,\\
	R(z;\tau)
	&=
		\sum_{n\in \mathbb{Z}+\frac{1}{2}}
		\left( \mbox{sgn}(n) - E( (n+a)\sqrt{2y} )  \right)
		(-1)^{n-\frac{1}{2}}	
		e^{ -\pi in^2\tau - 2\pi inz }	
.
\end{align*}
We remark that $R(z;\tau)$ is not related to the rank generating function 
$R(\zeta;q)$ discussed in the introduction.
For $a,b\in\mathbb{R}$ we set
\begin{align*}
	g_{a,b}(\tau) 
	&=
	\sum_{n\in\mathbb{Z}+a}
	n e^{ \pi in^2\tau + 2\pi inb }
.
\end{align*}
Finally for $u,v\notin\mathbb{Z}+\tau\mathbb{Z}$ we set 
\begin{align*}
\widetilde{\mu}(u,v;\tau)
&=
	\mu(u,v;\tau) + \frac{i}{2}R(u-v;\tau)	
,\\
\widehat{A}_{\ell}(u,v;\tau)
&=
	A_{\ell}(u,v;\tau)
	+
	\frac{i}{2}\sum_{k=0}^{\ell-1}
	e^{2\pi iku}\vartheta\left( v+k\tau +\frac{(\ell-1)}{2}; \ell\tau \right)
	R\left( \ell u -v-k\tau -\frac{(\ell-1)}{2}; \ell\tau \right)
.
\end{align*}
Zwegers studied most of these functions in his
revolutionary PhD thesis \cite{Zwegers},
giving their transformation
formulas which are essential to our results. The functions 
$A_\ell(u,v;\tau)$ and $\widehat{A}_\ell(u,v;\tau)$ are 
the level $\ell$ Appell functions from \cite{Zwegers2}.

\begin{proposition}\label{PropRankToMu}
Suppose $\zeta=e^{2\pi iz}$, then
\begin{align*}
R2(\zeta;q)
&=
q^{\frac{1}{8}}(\zeta^{-1}-1)\frac{\eta(2\tau)}{\eta(\tau)\eta(4\tau)}
A_2\left(z,-\tau-\frac{1}{2};2\tau\right)
\end{align*}
\end{proposition}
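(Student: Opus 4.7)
The plan is to start from the right-hand side and unfold it directly into the explicit Lambert-type series for $R2(\zeta;q)$ given just above Proposition \ref{PropRankToMu}. The two tasks are: (a) identify the eta-quotient prefactor with the infinite product $(-q;q^2)_\infty/(q^2;q^2)_\infty$; and (b) show that $(\zeta^{-1}-1)A_2(z,-\tau-1/2;2\tau)$ equals the bracketed sum in the series representation of $R2(\zeta;q)$.

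For (a), I would use $\eta(\tau)=q^{1/24}(q;q)_\infty$ to write
\begin{align*}
	q^{1/8}\frac{\eta(2\tau)}{\eta(\tau)\eta(4\tau)}
	=
	\frac{(q^2;q^2)_\infty}{(q;q)_\infty(q^4;q^4)_\infty},
\end{align*}
since the rational exponent of $q$ collapses to $1/12-1/24-1/6+1/8=0$. Then I would invoke the standard identities $(q;q)_\infty=(q;q^2)_\infty(q^2;q^2)_\infty$, $(q^2;q^2)_\infty=(q^2;q^4)_\infty(q^4;q^4)_\infty$, and $(q;q^2)_\infty(-q;q^2)_\infty=(q^2;q^4)_\infty$ to reduce the above to $(-q;q^2)_\infty/(q^2;q^2)_\infty$, which is exactly the product appearing in front of the Lambert sum in the definition of $R2(\zeta;q)$.

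For (b), I would plug $\ell=2$, $u=z$, $v=-\tau-\tfrac12$ and modular variable $2\tau$ into the definition of $A_\ell$, obtaining
\begin{align*}
	A_2(z,-\tau-\tfrac12;2\tau)
	=
	\zeta\sum_{n=-\infty}^{\infty}\frac{(-1)^n q^{2n^2+n}}{1-\zeta q^{2n}}
\end{align*}
after simplifying $(-1)^{2n}=1$, $e^{\pi i\cdot 2n(n+1)\cdot 2\tau}=q^{2n^2+2n}$, and $e^{2\pi i n(-\tau-1/2)}=(-1)^n q^{-n}$. I would then isolate the $n=0$ term as $\zeta/(1-\zeta)$ and pair $n\geq 1$ with $-n$; after applying $1-\zeta q^{-2n}=-\zeta q^{-2n}(1-\zeta^{-1}q^{2n})$ and combining the two resulting fractions over a common denominator, the bracket reduces to
\begin{align*}
	\frac{\zeta}{1-\zeta q^{2n}}-\frac{1}{1-\zeta^{-1}q^{2n}}
	=
	\frac{-(1-\zeta)(1+q^{2n})}{(1-\zeta q^{2n})(1-\zeta^{-1}q^{2n})}.
\end{align*}
Multiplying through by $\zeta^{-1}-1$ uses the identity $(\zeta^{-1}-1)\zeta/(1-\zeta)=1$ to turn the isolated $n=0$ term into the constant $1$, while $(\zeta^{-1}-1)(1-\zeta)=-(1-\zeta)(1-\zeta^{-1})$ supplies exactly the factor needed in the Lambert sum, matching the expression for $R2(\zeta;q)$ term-by-term.

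There is no real obstacle here; the proof is a clean bookkeeping exercise, and the only mild traps are the sign conventions when pairing $n\leftrightarrow -n$ and keeping the factor of $(-1)^n$ correctly tracked through $e^{-\pi i n}$, plus checking the exponent of $q$ when clearing the $-\tau-\tfrac12$ argument. The structural point worth emphasizing is that specializing the second Appell argument to $-\tau-\tfrac12$ is precisely the shift which converts the bilateral Appell sum into the one-sided pole-pair form natural for the $M_2$-rank generating function, which is why $A_2$ (rather than $\mu$) appears.
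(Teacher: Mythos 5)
Your proposal is correct and is essentially the paper's proof run in reverse: the paper starts from the Lambert series for $R2(\zeta;q)$, splits the summand by partial fractions, and folds the resulting one-sided sums into the bilateral sum defining $A_2$, whereas you unfold $A_2$ and re-pair $n\leftrightarrow -n$ to recover the same partial-fraction decomposition, with an identical treatment of the eta-quotient prefactor. All the individual computations (the specialization $v=-\tau-\tfrac12$, the sign bookkeeping, and the product identities) check out.
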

\begin{proof}
We have that
\begin{align*}
R2(\zeta;q)
&=
	\frac{\aqprod{-q}{q^2}{\infty}}{\aqprod{q^2}{q^2}{\infty}}
	\left(1+
		\sum_{n=1}^\infty \frac{ (1-\zeta)(1-\zeta^{-1})(-1)^nq^{2n^2+n}(1+q^{2n}) }
			{(1-\zeta q^{2n})(1-\zeta^{-1}q^{2n})}
	\right)	
\\
&=
	\frac{\aqprod{-q}{q^2}{\infty}}{\aqprod{q^2}{q^2}{\infty}}
	\left(1+
		\sum_{n=1}^\infty (-1)^nq^{2n^2+n}
		\left( 
			\frac{(1-\zeta)}{(1-\zeta q^{2n})}
			+
			\frac{(1-\zeta^{-1})}{(1-\zeta^{-1}q^{2n})}
		\right)	
	\right)
\\
&=
	\frac{\aqprod{-q}{q^2}{\infty}}{\aqprod{q^2}{q^2}{\infty}}
	(1-\zeta)
	\sum_{n=-\infty}^\infty 
		\frac{ (-1)^n q^{2n^2+n}}
		{(1-\zeta q^{2n})}
\\
&=
	(\zeta^{-1}-1)
	\frac{\aqprod{-q}{q^2}{\infty}}{\aqprod{q^2}{q^2}{\infty}}
	A_2\left(z,-\tau-\frac{1}{2}; 2\tau \right)
.
\end{align*}
To finish the proof, we note that
\begin{align*}
\frac{\aqprod{-q}{q^2}{\infty}}{\aqprod{q^2}{q^2}{\infty}}
&=
\frac{\aqprod{q^2}{q^2}{\infty}}{\aqprod{q}{q}{\infty}\aqprod{q^4}{q^4}{\infty}}
=
q^{\frac{1}{8}}\frac{\eta(2\tau)}{\eta(\tau)\eta(4\tau)}
.
\end{align*}
\end{proof}
We note that we could instead only use $\mu$. In particular one can easily verify that
$R2(\zeta;q)=i(1-\zeta)(\zeta^{-1}\mu(2z,-\tau;4\tau)-\mu(2z,\tau;4\tau) )$ 
and more generally that
$\widehat{A}_2(u,v;\tau)=
\vartheta\left(v+\frac{1}{2};2\tau\right)\tmu\left(2u,v+\frac{1}{2};2\tau\right)
+e^{2\pi iu}\vartheta\left(v+\tau+\frac{1}{2};2\tau\right)\tmu\left(2u,v+\tau+\frac{1}{2};2\tau\right)$.
We choose to use $\widehat{A}_2$ as this form makes it immediately apparent that
we should expect $\widetilde{\mathcal{R}2}(a,c;\tau)$ to have the same multiplier
as $\frac{\eta(2\tau)}{\eta(\tau)\eta(4\tau)}$ on a certain subgroup.

\begin{proposition}\label{PropositionN2TildeToMuTilde}
For $a$ and $c$ integers, $c>0$, and $c\nmid 2a$, we have that
\begin{align*}
\mathcal{R}2(a,c;\tau)
&=
	(\z{c}{-a}-1)
	\frac{\eta(2\tau)}{\eta(\tau)\eta(4\tau)}
	A_2\left(\frac{a}{c}, -\tau-\frac{1}{2}; 2\tau \right)
,\\
\widetilde{\mathcal{R}2}(a,c;\tau)
&=
	(\z{c}{-a}-1)
	\frac{\eta(2\tau)}{\eta(\tau)\eta(4\tau)}
	\widehat{A}_2\left(\frac{a}{c}, -\tau-\frac{1}{2}; 2\tau \right)
.
\end{align*}
\end{proposition}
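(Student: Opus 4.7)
The first identity is essentially a relabeling of Proposition \ref{PropRankToMu}. The plan is to substitute $\zeta = \z{c}{a}$, so that $z = a/c$, into the formula $R2(\zeta;q) = q^{1/8}(\zeta^{-1}-1)\frac{\eta(2\tau)}{\eta(\tau)\eta(4\tau)}A_2(z, -\tau - 1/2; 2\tau)$ and multiply through by $q^{-1/8}$ to match the definition $\mathcal{R}2(a,c;\tau) = q^{-1/8}R2(\z{c}{a};q)$.

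For the second identity, the approach is to show that the two correction terms match. On the one hand, the definition of $\widetilde{\mathcal{R}2}$ expresses $\widetilde{\mathcal{R}2} - \mathcal{R}2$ as the two integrals involving $g_{1/4, 1/2 - 2a/c}(4w)$ and $g_{3/4, 1/2 - 2a/c}(4w)$. On the other hand, the definition of $\widehat{A}_2$ expresses $\widehat{A}_2 - A_2$ as a sum over $k = 0, 1$ of products of theta values and $R$ values. Plugging in $\ell = 2$, $u = a/c$, $v = -\tau - 1/2$, and modular variable $2\tau$, the two theta factors become $\vartheta(-\tau; 4\tau)$ and $\vartheta(\tau; 4\tau)$, which are negatives of each other by the oddness of $\vartheta$ in the first variable. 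The correction therefore collapses to $\tfrac{i}{2}\vartheta(-\tau; 4\tau)\left[R(2a/c + \tau; 4\tau) - \z{c}{a}R(2a/c - \tau; 4\tau)\right]$.

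The next step is to simplify the product of the eta quotient $\frac{\eta(2\tau)}{\eta(\tau)\eta(4\tau)}$ with the theta factor. Reducing $\vartheta(-\tau; 4\tau)$ to the theta sum $i\sum_n (-1)^n q^{2n^2+n}$ and invoking the Jacobi triple product yields the clean identity $\vartheta(-\tau; 4\tau) = iq^{-1/8}\eta(\tau)\eta(4\tau)/\eta(2\tau)$, so that $\vartheta(-\tau; 4\tau)\cdot\frac{\eta(2\tau)}{\eta(\tau)\eta(4\tau)}$ equals the scalar $iq^{-1/8}$, and the eta quotient disappears entirely from the correction term. What remains is to express each $R(2a/c \pm \tau; 4\tau)$ as an integral of $g$, which is a direct application of Zwegers' Mordell-type integral formula of the shape $R(\alpha\tau' - \beta; \tau') = -e^{\pi i\alpha^2\tau' - 2\pi i\alpha(\beta + 1/2)}\int_{-\overline{\tau'}}^{i\infty}\frac{g_{\alpha + 1/2, \beta + 1/2}(w)}{\sqrt{-i(w+\tau')}}dw$, applied at $\tau' = 4\tau$, $\beta = -2a/c$, and $\alpha = \pm 1/4$. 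The substitution $w = 4s$ then converts each integral into the form $\int_{-\overline{\tau}}^{i\infty}g_{3/4, 1/2 - 2a/c}(4s)/\sqrt{-i(s + \tau)}\,ds$ or its $g_{1/4}$ analogue appearing in the definition.

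The main obstacle is the careful bookkeeping of multiplicative factors. The $q^{1/8}$ powers from Zwegers' prefactor $e^{\pi i\alpha^2\tau'}$ at $\alpha = \pm 1/4$ must cancel the $q^{-1/8}$ from the theta identity, while the $e^{\mp\pi i/4}$ together with $\z{2c}{\pm a}$ factors from Zwegers' prefactor $e^{-2\pi i\alpha(\beta + 1/2)}$ must combine with $(\z{c}{-a} - 1) = \z{c}{-a}(1 - \z{c}{a})$ and the interior factor $\z{c}{a}$ between the two $R$ values so as to produce precisely the prefactor $-i(1 - \z{c}{a})\z{2c}{-a}$ and coefficients $e^{\pm\pi i/4}$ appearing in the definition of $\widetilde{\mathcal{R}2}$. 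Some additional care may be required if $2a/c$ does not lie in the principal domain where Zwegers' integral representation applies directly, in which case one uses the standard quasi-periodicity of $R$ in its first variable to reduce to that case.
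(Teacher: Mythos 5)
Your proposal is correct and follows essentially the same route as the paper: the first identity is read off from Proposition \ref{PropRankToMu}, and the second is proved by expanding the $k=0,1$ correction terms in the definition of $\widehat{A}_2$, cancelling $\vartheta(\mp\tau;4\tau)$ against the eta quotient via $\vartheta(-\tau;4\tau)=i\aqprod{q,q^3,q^4}{q^4}{\infty}=iq^{-1/8}\eta(\tau)\eta(4\tau)/\eta(2\tau)$, and converting $R(\tfrac{2a}{c}\pm\tau;4\tau)$ into the $g_{3/4}$ and $g_{1/4}$ integrals by Zwegers' Theorem 1.16 with the substitution $z=4w$. The prefactor bookkeeping you flag does close up exactly as you anticipate, so no further ideas are needed.
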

\begin{proof}
We see that the first identity is a trivial corollary of the previous proposition.
For the second identity, given
the definitions of
$\widetilde{\mathcal{R}2}$ and $\widehat{A}_2$,
we see we must show that
\begin{align*}
&
\frac{(\z{c}{-a}-1)\eta(2\tau)}{\eta(\tau)\eta(4\tau)}
\left(
	\frac{i}{2}\vartheta(-\tau;4\tau)R\left(\tfrac{2a}{c} + \tau;4\tau \right)
	+
	\frac{i}{2}\z{c}{a}\vartheta(\tau;4\tau)R\left(\tfrac{2a}{c}- \tau;4\tau \right)
\right)
\\
&=
	-
	i(1-\z{c}{a})\z{2c}{-a}
	\left(
		e^{\frac{\pi i}{4}}	
		\int_{-\overline{\tau}}^{i\infty}			
		\frac{g_{\frac{3}{4}, \frac{1}{2}-\frac{2a}{c}} (4w) dw }{\sqrt{-i(w+\tau)}}					
		+
		e^{-\frac{\pi i}{4}}	
		\int_{-\overline{\tau}}^{i\infty}			
		\frac{g_{\frac{1}{4}, \frac{1}{2}-\frac{2a}{c}} (4w) dw }{\sqrt{-i(w+\tau)}}		
	\right)
.
\end{align*}
Using that
\begin{align*}
\frac{\eta(\tau)\eta(4\tau)}{\eta(2\tau)} &= q^{\frac{1}{8}}\aqprod{q,q^3,q^4}{q^4}{\infty}
,&
\vartheta(\tau;4\tau) &= -i\aqprod{q,q^3,q^4}{q^4}{\infty}
,&
\vartheta(-\tau;4\tau) &= i\aqprod{q,q^3,q^4}{q^4}{\infty}
,
\end{align*}
we find that
\begin{align*}
&\frac{(\z{c}{-a}-1)\eta(2\tau)}{\eta(\tau)\eta(4\tau)}
\left(
	\frac{i}{2}\vartheta(-\tau;4\tau)R\left(\tfrac{2a}{c} + \tau;4\tau \right)
	+
	\frac{i}{2}\z{c}{a}\vartheta(\tau;4\tau)R\left(\tfrac{2a}{c}- \tau;4\tau \right)
\right)
\\&=
\frac{(\z{c}{-a}-1)q^{-\frac{1}{8}}}{2}
\left( 
	-R\left(\tfrac{2a}{c} + \tau;4\tau \right)
	+
	\z{c}{a}R\left(\tfrac{2a}{c}- \tau;4\tau \right)
\right).
\end{align*}
Now, by Theorem 1.16 of \cite{Zwegers}, we have that
\begin{align*}
	R( \tfrac{2a}{c}-\tau; 4\tau )
	&=
		-\exp\left(\tfrac{4\pi i\tau}{16} + \tfrac{2\pi i}{4}(\tfrac{-2a}{c}+\tfrac{1}{2}) \right)	
		\int_{-\overline{4\tau}}^{i\infty}	
		\frac{g_{\frac{1}{4}, \frac{1}{2} - \frac{2a}{c}}(z) dz }
			{\sqrt{-i(z+4\tau)}}
	=
		-2q^{\frac{1}{8}}e^{\frac{\pi i}{4}} \z{2c}{-a}  	
		\int_{-\overline{\tau}}^{i\infty}	
		\frac{g_{\frac{1}{4}, \frac{1}{2} - \frac{2a}{c}}(4w) dw }
			{\sqrt{-i(w+\tau)}}
	,
\end{align*}
and similarly
\begin{align*}
	R( \tfrac{2a}{c}+\tau; 4\tau )
	&=
		-\exp\left(\tfrac{4\pi i\tau}{16} - \tfrac{2\pi i}{4}(\tfrac{-2a}{c}+\tfrac{1}{2}) \right)	
		\int_{-\overline{4\tau}}^{i\infty}	
		\frac{g_{\frac{3}{4}, \frac{1}{2} - \frac{2a}{c}}(z) dz }
			{\sqrt{-i(z+4\tau)}}
	=
		-2q^{\frac{1}{8}}e^{\frac{-\pi i}{4}} \z{2c}{a}  	
		\int_{-\overline{\tau}}^{i\infty}	
		\frac{g_{\frac{3}{4}, \frac{1}{2} - \frac{2a}{c}}(4w) dw }
			{\sqrt{-i(w+\tau)}}
	.
\end{align*}
The proposition then follows.
\end{proof}

\begin{proposition}\label{PropositionSTildeToMuTilde}
Suppose $k$ and $c$ are integers, $0\le k<c$, and $1+4k-c\not\equiv0\pmod{4c}$. 
Then 
\begin{align*}
\mathcal{S}(k;c;\tau)
&=
	q^{-\frac{(1+4k-2c)^2}{8}}
	\mu\left( (1+4k-c)c\tau , c^2\tau + \frac{(c-1)}{2}  ; 4c^2\tau \right)
\\
&=
	\left\{\begin{array}{ll}
		\frac{i^{c} \eta(2c^2\tau)}{\eta(c^2\tau)\eta(4c^2\tau)}
		q^{-\frac{(3c-4k-1)(c-4k-1)}{8}}		
		A_1\left( (1+4k-c)c\tau, c^2\tau + \frac{(c-1)}{2}; 4c^2\tau  \right)
		& \mbox{ if c is odd,}
		\\[1.5ex]
		\frac{i^{c} \eta(c^2\tau)}{\eta(2c^2\tau)^2}
		q^{-\frac{(3c-4k-1)(c-4k-1)}{8}}		
		A_1\left( (1+4k-c)c\tau, c^2\tau + \frac{(c-1)}{2}; 4c^2\tau  \right)
		& \mbox{ if c is even,}
	\end{array}\right.
\\
\widetilde{\mathcal{S}}(k;c;\tau)
&=
	q^{-\frac{(1+4k-2c)^2}{8}}
	\tmu\left( (1+4k-c)c\tau , c^2\tau + \frac{(c-1)}{2}  ; 4c^2\tau \right)
\\
&=
	\left\{\begin{array}{ll}
		\frac{i^{c} \eta(2c^2\tau)}{\eta(c^2\tau)\eta(4c^2\tau)}
		q^{-\frac{(3c-4k-1)(c-4k-1)}{8}}		
		\widehat{A}_1\left( (1+4k-c)c\tau, c^2\tau + \frac{(c-1)}{2}; 4c^2\tau  \right)
		& \mbox{ if c is odd,}
		\\[1.5ex]
		\frac{i^{c} \eta(c^2\tau)}{\eta(2c^2\tau)^2}
		q^{-\frac{(3c-4k-1)(c-4k-1)}{8}}		
		\widehat{A}_1\left( (1+4k-c)c\tau, c^2\tau + \frac{(c-1)}{2}; 4c^2\tau  \right)
		& \mbox{ if c is even.}
	\end{array}\right.
\end{align*}
\end{proposition}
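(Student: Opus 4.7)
The plan is to verify all four identities by direct computation analogous in spirit to Proposition \ref{PropositionN2TildeToMuTilde}: expand Zwegers' functions at the given arguments, match the resulting series against the definition of $S(k,c;\tau)$, then invoke Zwegers' integral representation of $R$ to match the non-holomorphic corrections.

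First I would attack the identity $\mathcal{S}(k,c;\tau) = q^{-(1+4k-2c)^2/8}\mu((1+4k-c)c\tau,c^2\tau+(c-1)/2;4c^2\tau)$. With $u=(1+4k-c)c\tau$ and $v=c^2\tau+(c-1)/2$, I compute $e^{\pi i u}=q^{(1+4k-c)c/2}$, $e^{2\pi i nv}=(-1)^{n(c-1)}q^{nc^2}$, and $e^{\pi i n(n+1)(4c^2\tau)}=q^{2c^2n^2+2c^2n}$; the summand in $\mu$ becomes $(-1)^{nc}q^{2c^2n^2+3c^2n}/(1-q^{4c^2n+(1+4k-c)c})$, matching the sum defining $S(k,c;\tau)$. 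For the theta denominator, the product formula $\vartheta(z;\tau)=-iq^{1/8}e^{-\pi i z}\aqprod{e^{2\pi iz},e^{-2\pi iz}q,q}{q}{\infty}$ yields $\vartheta(v;4c^2\tau)=i^{-c}\aqprod{(-1)^{c-1}q^{c^2},(-1)^{c-1}q^{3c^2},q^{4c^2}}{q^{4c^2}}{\infty}$, which is exactly the product appearing in $S$. Assembling the prefactors reduces to the arithmetic identity $-(1+4k-2c)^2/8+(1+4k-c)c/2+(c^2-4kc+2k^2-c+k)=-1/8$, which follows from expansion.

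For the second form involving $A_1$ I would use $A_1(u,v;\tau)=\vartheta(v;\tau)\mu(u,v;\tau)$, obtained by inspection of the two defining series, and then convert $\vartheta(v;4c^2\tau)$ into an $\eta$-quotient. Setting $Q=q^{c^2}$, when $c$ is odd one has $\aqprod{Q,Q^3,Q^4}{Q^4}{\infty}=\aqprod{Q}{Q}{\infty}\aqprod{Q^4}{Q^4}{\infty}/\aqprod{Q^2}{Q^2}{\infty}$, while when $c$ is even one has $\aqprod{-Q,-Q^3,Q^4}{Q^4}{\infty}=\aqprod{Q^2}{Q^2}{\infty}^2/\aqprod{Q}{Q}{\infty}$; these follow from $\aqprod{\pm Q;Q^2}{}{\infty}\aqprod{\mp Q;Q^2}{}{\infty}=\aqprod{Q^2;Q^4}{}{\infty}$ and standard manipulations. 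Passing to $\eta$ via $\aqprod{q^n}{q^n}{\infty}=q^{-n/24}\eta(n\tau)$ contributes a residual factor $q^{-c^2/8}$, which shifts the exponent from $-(1+4k-2c)^2/8$ to $-(3c-4k-1)(c-4k-1)/8$ since $(1+4k-2c)^2-c^2=(1+4k-3c)(1+4k-c)$.

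For the $\tmu$ and $\widehat{A}_1$ identities, since $\tmu=\mu+\tfrac{i}{2}R$ and $\widehat{A}_1=\vartheta\cdot\tmu$ (from the $\ell=1$ case of the definition of $\widehat{A}_\ell$), it suffices to match the non-holomorphic correction. I would write $u-v=(1+4k-2c)c\tau-(c-1)/2=\alpha(4c^2\tau)+\beta$ with $\alpha=(1+4k-2c)/(4c)$ and $\beta=-(c-1)/2$, so that $\alpha+1/2=(1+4k)/(4c)$ and $1/2-\beta=c/2$, the precise indices appearing in $\widetilde{\mathcal{S}}(k,c;\tau)$. Zwegers' Theorem 1.16 then gives $R(u-v;4c^2\tau)$ as an integral of $g_{(1+4k)/(4c),c/2}$ over $[-\overline{4c^2\tau},i\infty)$, and the substitution $z=4c^2w$ (producing a factor $2c$ from $dz/\sqrt{-i(z+4c^2\tau)}$) combined with the scalar simplification $e^{-\pi i(1+4k-2c)/4}=e^{-\pi i/4}(-1)^ki^{c}$ yields exactly the integral term $-(-1)^ki^{1+c}e^{-\pi i/4}c\int\cdots$ that defines $\widetilde{\mathcal{S}}(k,c;\tau)-\mathcal{S}(k,c;\tau)$.

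The main obstacle is bookkeeping: tracking the many roots of unity ($i^c$, $(-1)^k$, $(-1)^{c-1}$, $e^{\pm\pi i/4}$) and fractional $q$-powers across the manipulations without sign errors, and correctly invoking the parity of $c$ when simplifying the theta product to an $\eta$-quotient. Once these are handled, the proof is mechanical.
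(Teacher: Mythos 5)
Your proposal is correct and follows essentially the same route as the paper's proof: the $\mu$ representation is read off from the definitions (series summand plus the Jacobi triple product for $\vartheta(c^2\tau+\tfrac{c-1}{2};4c^2\tau)$), the theta denominator is converted to an $\eta$-quotient by the parity of $c$ with the exponent shift $(1+4k-2c)^2-c^2=(3c-4k-1)(c-4k-1)$, and the non-holomorphic correction is matched via $\tmu=\mu+\tfrac{i}{2}R$, $\widehat{A}_1=\vartheta\cdot\tmu$, and Zwegers' Theorem~1.16 with the substitution $z=4c^2w$. The only detail worth adding is the explicit check that $-\tfrac12<\tfrac{1+4k-2c}{4c}<\tfrac12$ (guaranteed by $0\le k<c$), which is the hypothesis needed to apply Zwegers' integral representation of $R$.
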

\begin{proof}
We find that
\begin{align*}
\mathcal{S}(k;c;\tau)
&=
	q^{-\frac{(1+4k-2c)^2}{8}}
	\mu\left( (1+4k-c)c\tau , c^2\tau + \frac{(c-1)}{2}  ; 4c^2\tau \right)
\end{align*}
follows directly from definitions. Next, we note that when $c$ is odd
\begin{align*}
\frac{1}{\aqprod{(-1)^{c-1}q^{c^2},(-1)^{c-1}q^{3c^2},q^{4c^2}}{q^{4c^2}}{\infty}}
&=
\frac{1}{\aqprod{q^{c^2},q^{3c^2},q^{4c^2}}{q^{4c^2}}{\infty}}
=
\frac{\aqprod{q^{2c^2}}{q^{2c^2}}{\infty}}{\aqprod{q^{c^2}}{q^{c^2}}{\infty} \aqprod{q^{4c^2}}{q^{4c^2}}{\infty}}
\\
&=
\frac{q^{\frac{c^2}{8}} \eta(2c^2\tau)}{ \eta(c^2\tau) \eta(4c^2\tau) }
,
\end{align*}
and when $c$ is even
\begin{align*}
\frac{1}{\aqprod{(-1)^{c-1}q^{c^2},(-1)^{c-1}q^{3c^2},q^{4c^2}}{q^{4c^2}}{\infty}}
&=
\frac{1}{\aqprod{-q^{c^2},-q^{3c^2},q^{4c^2}}{q^{4c^2}}{\infty}}
=
\frac{\aqprod{q^{c^2},q^{3c^2}}{q^{4c^2}}{\infty} }
	{\aqprod{q^{4c^2}}{q^{4c^2}}{\infty} \aqprod{q^{2c^2},q^{6c^2}}{q^{8c^2}}{\infty}}
\\
&=
\frac{\aqprod{q^{c^2}}{q^{c^2}}{\infty}}{\aqprod{q^{2c^2}}{q^{2c^2}}{\infty}^2 }
=
\frac{q^{\frac{c^2}{8}} \eta(c^2\tau)}{ \eta(2c^2\tau)^2}
.
\end{align*}
The identities for $\mathcal{S}(k,c;\tau)$ then follow from the definition of $A_1(u,v;\tau)$.

To establish the identities for $\widetilde{\mathcal{S}}(k,c;\tau)$ we must verify that
for $0\le k<c$ we have
\begin{align*}
	\frac{i}{2}q^{-\frac{(1+4k-2c)^2}{8}} 
	R\left( (1+4k-2c)c\tau - \tfrac{c-1}{2} ; 4c^2\tau  \right)
	&=
	(-1)^{k+1} i^{1+c} e^{-\frac{\pi i}{4}} c
	\int_{-\overline{\tau}}^\infty
	\frac{  g_{ \frac{1+4k}{4c}, \frac{c}{2}     }(4c^2w)   }
	{\sqrt{-i(w+\tau)}}dw
.
\end{align*}
For this we use Theorem 1.6 of \cite{Zwegers} to determine that
\begin{align*}
	&\frac{i}{2}q^{-\frac{(1+4k-2c)^2}{8}} 
	R\left( (1+4k-2c)c\tau - \tfrac{c-1}{2}  ;4c^2\tau \right)
	\\
	&=
		-\frac{i}{2}q^{-\frac{(1+4k-2c)^2}{8}} 
		\exp\left(  \pi i \tau\tfrac{(1+4k-2c)^2}{4} -  \pi i\tfrac{(1+4k-2c)}{4}				
		\right)	
		\int_{-\overline{4c^2\tau}}^{i\infty}
		\frac{g_{ \frac{1+4k}{4c}, \frac{c}{2}  }(z)  }{\sqrt{-i(z+4c^2\tau)}}
			dz
	\\			
	&=
		(-1)^{k+1} i^{1+c} e^{-\frac{\pi i}{4}} c	
		\int_{-\overline{\tau}}^{i\infty}
		\frac{g_{ \frac{1+4k}{4c}, \frac{c}{2}  }(4c^2w)  }{\sqrt{-i(4c^2w+4c^2\tau)}}
			dw
.			
\end{align*}

\end{proof}

It is through these identities that we take the definitions of
$\mathcal{S}(k,c;\tau)$ and $\widetilde{\mathcal{S}}(k,c;\tau)$
when $k$ is an arbitrary integer. That is to say, they are defined in terms of 
$\mu$ and $\tmu$ rather than series and integrals.

Working with $\tmu(u,v;\tau)$ and $\widehat{A}_{\ell}(u,v;\tau)$ is advantageous in that  
the transformations under the action of the modular group
$\SLTwo$ is known and quite elegant.
With this we are able to determine a simple multiplier for
$\widetilde{\mathcal{R}2}(a,c;\tau)$
and $\widetilde{\mathcal{S}}(k,c;\tau)$.
For this reason we do not need to replace
$\tau$ by $8\tau$ to have a strong understanding of these functions,
even with the factor $q^{-\frac{1}{8}}$ in the definitions of
$\mathcal{R}2(a,c;\tau)$
and $\widetilde{\mathcal{R}2}(a,c;\tau)$.
Additionally the transformations of $\tmu$ allow us to
easily determine the behavior of $\widetilde{\mathcal{R}2}(a,c;\tau)$
and $\widetilde{\mathcal{S}}(k,c;\tau)$ at the cusps.

\section{Transformations Formulas}

For a matrix  $B=\SmallMatrix{\alpha}{\beta}{\gamma}{\delta}\in\SLTwo$
we have $\nu(B)$, the $\eta$-multiplier, defined by
\begin{align*}
	\eta(B\tau) &= \nu(B)\sqrt{\gamma\tau+\delta} \,\eta(\tau)
,
\end{align*}
where $\eta(\tau)$ is Dedekind's eta-function,
\begin{align*}
	\eta(\tau) &= q^{\frac{1}{24}}\aqprod{q}{q}{\infty}
.
\end{align*}
A convenient form for the $\eta$-multiplier, which can be found 
as Theorem 2 in Chapter 4 of \cite{Knopp}, is
\begin{align}
	\label{EqEtaMultipler}
	\nu(B)
	&=
	\left\{
	\begin{array}{ll}
		\big(\frac{\delta}{|\gamma|} \big)
		\exp\left(\frac{\pi i}{12}\left(
			(\alpha+\delta)\gamma - \beta\delta(\gamma^2-1) - 3\gamma		
		\right)\right)
		&
		\mbox{ if } \gamma \equiv 1 \pmod{2},
		\\				
		\Jac{\gamma}{\delta}
		\exp\left(\frac{\pi i}{12}\left(
			(\alpha+\delta)\gamma - \beta\delta(\gamma^2-1) + 3\delta - 3 - 3\gamma\delta		
		\right)\right)
		&
		\mbox{ if } \delta\equiv 1\pmod{2}.
	\end{array}
	\right.
\end{align}
For an integer $m$ we let
\begin{align*}
 	B_{m} &= \TwoTwoMatrix{\alpha}{m\beta}{\gamma/m}{\delta}
	.
\end{align*}
The utility of $B_m$ is in the fact that $mB\tau=B_m(m\tau)$.

Our transformation formulas for $\widetilde{\mathcal{R}2}(a,c;\tau)$ 
and $\widetilde{\mathcal{S}}(k,c;\tau)$
are easily deduced by the
transformations of $\tmu(u,v;\tau)$ and $\widehat{A}_\ell(u,v;\tau)$. The following 
essential properties are from
Theorem 1.11 of \cite{Zwegers} and Theorem 2.2 of \cite{Zwegers2}. If $k,l,m,n$ are integers then
\begin{align}
	\label{EqTildeMuElliptic}
	\tmu\left(u+k\tau+l, v+m\tau+n; \tau \right)
	&=
	(-1)^{k+l+m+n}
	e^{
		\pi i\tau(k-m)^2 + 2\pi i(k-m)(u-v)	
	}
	\tmu(u,v;\tau)
	,\\
	\label{EqEllipticTransformationAl}
	\widehat{A}_\ell(u+k\tau+l,v+m\tau+n;\tau)
	&=
	(-1)^{\ell(k+l)}
	e^{2\pi iu(\ell k -m)}e^{-2\pi ivk}q^{\frac{\ell k^2}{2}-km}\widehat{A}_\ell(u,v;\tau)
	,
\end{align}
and
if $B=\SmallMatrix{\alpha}{\beta}{\gamma}{\delta}\in\SLTwo$ then
\begin{align}
	\label{EqTildeMuModular}
	\tmu\left(
		\frac{u}{\gamma\tau+\delta},
		\frac{v}{\gamma\tau+\delta};
		B\tau		
	\right)
	&=
	\nu(B)^{-3}
	\exp\left(
		-\frac{\pi i\gamma(u-v)^2}{\gamma\tau+\delta}
	\right)	
	\sqrt{\gamma\tau+\delta}\,
	\tmu\left(u, v;\tau\right)	
	,\\
	\label{EqModularTransformationAl}
	\widehat{A}_\ell\left(\frac{u}{\gamma\tau+\delta}, \frac{v}{\gamma\tau+\delta};B\tau\right)
	&=
	(\gamma\tau+\delta)e^{\frac{\pi i\gamma}{(\gamma\tau+\delta)}\left(-\ell u^2+2uv\right)}
	\widehat{A}_\ell(u,v;\tau)
	.
\end{align}

The propositions and formulas of this section are organized as follows.
We begin by investigating the transformation formula for 
$\widetilde{\mathcal{R}2}(a,c;\tau)$ that follows
from \eqref{EqModularTransformationAl} and refine this result by restricting to
a smaller congruence subgroup of $\SLTwo$ that
yields a simple multiplier. We then verify
$\widetilde{\mathcal{R}2}(a,c;\tau)$ is annihilated by $\Delta_{\frac{1}{2}}$
and that $\widetilde{\mathcal{R}2}(a,c;\tau)$ behaves correctly at the cusps.
With this we will have established part (1) of Theorem \ref{TheoremModularity}.
We proceed by determining a formula for 
the non-holomorphic part of
$\widetilde{\mathcal{R}2}(a,c;\tau)$ that allows us to determine the 
non-holomorpic parts of the $c$-dissections of 
$\widetilde{\mathcal{R}2}(a,c;\tau)$. We then recognize $\widetilde{\mathcal{S}}(k,c;\tau)$ as the 
functions whose non-holomorphic parts agree with those of the $c$-dissections
of $\widetilde{\mathcal{R}2}(a,c;\tau)$
and whose holomorphic parts are $q$-series with exponents taking on a single
value modulo $c$.
After this we give $\widetilde{\mathcal{S}}(k,c;\tau)$
the same treatment as $\widetilde{\mathcal{R}2}(a,c;\tau)$.
Lastly we quickly deduce the necessary transformation formulas for various
generalized eta quotients using the work of Biagioli in \cite{Biagioli}.

\begin{proposition}\label{PropositionTransformationR2Tilde}
Suppose $a$ and $c$ are integers, $c>0$, and $c\nmid 2a$.
If $B=\begin{psmallmatrix}\alpha&\beta\\\gamma&\delta\end{psmallmatrix}
\in\Gamma_0(4)\cap\Gamma_0(2c)\cap\Gamma_1(c)$, then
\begin{align*}
\widetilde{\mathcal{R}2}(a,c;B\tau)
&=	
\frac{\nu(B_2)}{\nu(B)\nu(B_4)}
e^{\frac{\pi ia}{c}\left( \frac{2a\gamma}{c}-\frac{a\gamma\delta}{c}-\frac{\gamma}{2}+\alpha-1  \right)}
\sqrt{\gamma\tau+\delta}
\widetilde{\mathcal{R}2}(a,c;\tau).
\end{align*}
In particular, if $B=\begin{psmallmatrix}\alpha&\beta\\\gamma&\delta\end{psmallmatrix}
\in\Gamma_0(4)\cap\Gamma_0(2c^2)\cap\Gamma_1(2c)$, then
\begin{align*}
\widetilde{\mathcal{R}2}(a,c;B\tau)
&=	
\frac{\nu(B_2)}{\nu(B)\nu(B_4)}
\sqrt{\gamma\tau+\delta}
\widetilde{\mathcal{R}2}(a,c;\tau).
\end{align*}
\end{proposition}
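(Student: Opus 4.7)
The plan is to combine the representation
\begin{equation*}
\widetilde{\mathcal{R}2}(a,c;\tau) = (\z{c}{-a}-1)\frac{\eta(2\tau)}{\eta(\tau)\eta(4\tau)}\widehat{A}_2\!\left(\tfrac{a}{c},-\tau-\tfrac{1}{2};2\tau\right)
\end{equation*}
from Proposition \ref{PropositionN2TildeToMuTilde} with the modular and elliptic transformations \eqref{EqModularTransformationAl} and \eqref{EqEllipticTransformationAl} for $\widehat{A}_2$. The matrix identities $2B\tau = B_2(2\tau)$ and $4B\tau = B_4(4\tau)$, which hold as soon as $4\mid\gamma$, let me move factors of $\eta(k\tau)$ and the period-$2\tau$ Appell function through the action of $B$. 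The eta-quotient transforms immediately: applying the eta-multiplier at each of $B$, $B_2$, and $B_4$ and noting that the three $\sqrt{\gamma\tau+\delta}$ factors combine to a single one in the denominator yields
\begin{equation*}
\frac{\eta(2B\tau)}{\eta(B\tau)\eta(4B\tau)} = \frac{\nu(B_2)}{\nu(B)\nu(B_4)}\cdot\frac{1}{\sqrt{\gamma\tau+\delta}}\cdot\frac{\eta(2\tau)}{\eta(\tau)\eta(4\tau)}.
\end{equation*}

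Next I would transform $\widehat{A}_2$ by applying \eqref{EqModularTransformationAl} to $B_2$ acting on $2\tau$. Writing $a/c = u/(\gamma\tau+\delta)$ and $-B\tau - 1/2 = v/(\gamma\tau+\delta)$ with $u = a(\gamma\tau+\delta)/c$ and $v = -\alpha\tau - \beta - (\gamma\tau+\delta)/2$ produces
\begin{equation*}
\widehat{A}_2\!\left(\tfrac{a}{c},-B\tau-\tfrac{1}{2};2B\tau\right) = (\gamma\tau+\delta)\exp\!\left(\tfrac{\pi i(\gamma/2)}{\gamma\tau+\delta}(-2u^2+2uv)\right)\widehat{A}_2(u,v;2\tau).
\end{equation*}
The hypotheses $\gamma\equiv 0\pmod{2c}$, $\delta\equiv 1\pmod c$, and $\gamma\equiv 0\pmod 4$ (and so $\alpha,\delta$ odd since $\det B = 1$ forces $\alpha\delta$ odd) guarantee that $u$ and $v$ differ from $a/c$ and $-\tau-1/2$ by integer translates with respect to the period $2\tau$: explicitly, $u - a/c = (a\gamma/(2c))(2\tau) + a(\delta-1)/c$ and $v - (-\tau-1/2) = ((1-\alpha-\gamma/2)/2)(2\tau) + (-\beta - (\delta-1)/2)$, with all four coefficients integers. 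Applying \eqref{EqEllipticTransformationAl} then reduces $\widehat{A}_2(u,v;2\tau)$ back to $\widehat{A}_2(a/c,-\tau-1/2;2\tau)$, at the cost of further phase and power-of-$q^2$ factors.

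The main obstacle will be the algebraic simplification of the resulting multiplier. After multiplying together the eta-quotient factor, the modular exponential in $-2u^2+2uv$, and the elliptic contribution $e^{2\pi i(a/c)(2k'-k)}e^{-2\pi i(-\tau-1/2)k'}q^{2(k'^2-k'k)}$ (with $k'=a\gamma/(2c)$ and $k=(1-\alpha-\gamma/2)/2$), all $\tau$-dependent terms must cancel, as they must for the transformation law to be well-defined. Tracking the cancellations carefully, what survives is exactly the scalar
\begin{equation*}
\frac{\nu(B_2)}{\nu(B)\nu(B_4)}\exp\!\left(\tfrac{\pi ia}{c}\bigl(\tfrac{2a\gamma}{c}-\tfrac{a\gamma\delta}{c}-\tfrac{\gamma}{2}+\alpha-1\bigr)\right)\sqrt{\gamma\tau+\delta},
\end{equation*}
which is the claim.

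The second assertion is then a direct corollary. If additionally $\gamma\equiv 0\pmod{2c^2}$ and $\alpha\equiv 1\pmod{2c}$, then writing $\gamma = 2c^2\gamma_1$ one checks that each of the four summands $\tfrac{\pi ia}{c}\cdot\tfrac{2a\gamma}{c}$, $\tfrac{\pi ia}{c}\cdot\tfrac{a\gamma\delta}{c}$, $\tfrac{\pi ia}{c}\cdot\tfrac{\gamma}{2}$, $\tfrac{\pi ia}{c}\cdot(\alpha-1)$ is an even multiple of $\pi i$ (the third summand is $\pi iac\gamma_1$, which is even because $c$ is even or because, when $c$ is odd, the combined $\Gamma_0(4)$ constraint forces $4c^2\mid\gamma$), so the exponential collapses to $1$ and the stated multiplier reduces to the pure eta-multiplier ratio.
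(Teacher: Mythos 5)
Your proposal is correct and follows essentially the same route as the paper: represent $\widetilde{\mathcal{R}2}$ via the $\widehat{A}_2$ identity of Proposition \ref{PropositionN2TildeToMuTilde}, apply \eqref{EqModularTransformationAl} through $B_2$ acting on $2\tau$, then undo the shifts with \eqref{EqEllipticTransformationAl} (your integer translates and the congruences justifying them match the paper's exactly), and absorb the eta-quotient into $\nu(B_2)/(\nu(B)\nu(B_4))$. The only part you assert rather than carry out is the final collection of phases into $e^{\frac{\pi ia}{c}(\frac{2a\gamma}{c}-\frac{a\gamma\delta}{c}-\frac{\gamma}{2}+\alpha-1)}$, which is the same routine simplification the paper performs, and your term-by-term check that this exponential is trivial on $\Gamma_0(4)\cap\Gamma_0(2c^2)\cap\Gamma_1(2c)$ is sound.
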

\begin{proof}
To begin we let
\begin{align*}
\widehat{F}(a,c;\tau)&=\widehat{A}_2\left(\frac{a}{c},-\tau-\frac{1}{2};2\tau\right),
\end{align*}
so that
\begin{align*}
\widetilde{\mathcal{R}2}(a,c;\tau)
&=
(\z{c}{-a}-1)\frac{\eta(2\tau)}{\eta(\tau)\eta(4\tau)}\widehat{F}(a,c;\tau)
.
\end{align*}
We note that $B_2=\begin{psmallmatrix}\alpha&2\beta\\\gamma/2&\delta\end{psmallmatrix}\in\SLTwo$, 
so that we may apply \eqref{EqModularTransformationAl}
to find that
\begin{align*}
\widehat{F}(a,c;B\tau)
&=
	\widehat{A}_2\left( \frac{a}{c}, -B\tau -\frac{1}{2}; B_2(2\tau) \right)
\\
&=
	(\gamma\tau+\delta)
	\exp\left( \frac{\pi i\gamma}{2(\gamma\tau+\delta)}\left(
		-\frac{2a^2(\gamma\tau+\delta)^2}{c^2}
		-\frac{2a(\gamma\tau+\delta)}{c}\left( (\alpha\tau+\beta) + \frac{(\gamma\tau+\delta)}{2}  \right)
	\right)\right)
	\\&\quad\cdot
	\widehat{A}_2\left(
		\frac{a(\gamma\tau+\delta)}{c},
		-(\alpha\tau+\beta) - \frac{(\gamma\tau+\delta)}{2}
		; 2\tau
	\right)
\\
&=
	(\gamma\tau+\delta)
	e^{-\frac{\pi ia\gamma}{2c}\left( \frac{2a\delta}{c}+2\beta+\delta  \right)}
	q^{-\frac{a\gamma}{2c}\left( \frac{a\gamma}{c}+\alpha+\frac{\gamma}{2}  \right)}
	\widehat{A}_2\left( 
		\frac{a\gamma}{c}\tau+\frac{a\delta}{c},
		-\frac{(2\alpha+\gamma)}{2}\tau - \beta-\frac{\delta}{2};
		2\tau  
	\right)
\\
&=
	(\gamma\tau+\delta)
	e^{-\frac{\pi ia\gamma}{2c}\left( \frac{2a\delta}{c}+1  \right)}
	q^{-\frac{a\gamma}{2c}\left( \frac{a\gamma}{c}+\alpha+\frac{\gamma}{2}  \right)}
	\widehat{A}_2\left( 
		\frac{a\gamma}{c}\tau+\frac{a\delta}{c},
		-\frac{(2\alpha+\gamma)}{2}\tau - \beta-\frac{\delta}{2};
		2\tau  
	\right)
.
\end{align*}
Since
$\gamma\equiv 0 \pmod{2c}$,
$\delta\equiv 1\pmod{c}$,
$\delta\equiv 1\pmod{2}$, and
$\alpha-1+\frac{\gamma}{2}\equiv 0\pmod{2}$
we can apply \eqref{EqEllipticTransformationAl} to get that
\begin{align*}
&\widehat{A}_2\left( 
	\frac{a\gamma}{c}\tau+\frac{a\delta}{c},
	-\frac{2a+\gamma}{2}\tau - \beta-\frac{\delta}{2};
	2\tau  
\right)
\\&=
	\widehat{A}_2\left( 
		\frac{a}{c} + \frac{a\gamma}{2c}2\tau+\frac{a(\delta-1)}{c},
		-\tau-\frac{1}{2}
		-\left( \frac{\alpha-1}{2} + \frac{\gamma}{4}\right)2\tau - \beta-\frac{(\delta-1)}{2};
		2\tau  
	\right)
\\
&=
	e^{\frac{2\pi ia}{c}\left( \frac{a\gamma}{c}+\frac{(\alpha-1)}{2}+\frac{\gamma}{4}  \right)}
	e^{2\pi i\left(\tau+\frac{1}{2}\right)\frac{a\gamma}{2c} }
	q^{2\left( \frac{a^2\gamma^2}{4c^2} + \frac{a\gamma}{2c}\left(\frac{(\alpha-1)}{2}+\frac{\gamma}{4}\right)  \right)}
	\widehat{A}_2\left(\frac{a}{c},-\tau-\frac{1}{2};2\tau\right)
\\
&=
	e^{\frac{2\pi ia}{c}\left( \frac{a\gamma}{c}+\frac{(\alpha-1)}{2}+\frac{\gamma}{2}  \right)}
	q^{\frac{a\gamma}{2c}\left( \frac{a\gamma}{c} + \alpha + \frac{\gamma}{2}  \right)}
	\widehat{F}\left(a,c;\tau\right)
\\
&=
	e^{\frac{2\pi ia}{c}\left( \frac{a\gamma}{c}+\frac{(\alpha-1)}{2}  \right)}
	q^{\frac{a\gamma}{2c}\left( \frac{a\gamma}{c} + \alpha + \frac{\gamma}{2}  \right)}
	\widehat{F}\left(a,c;\tau\right)
.
\end{align*}
For $B\in\Gamma_0(4)\cap\Gamma_0(2c)\cap\Gamma_1(c)$ this yields that
\begin{align*}
\widetilde{\mathcal{R}2}(a,c;B\tau)
&=	
\sqrt{\gamma\tau+\delta}
\frac{\nu(B_2)}{\nu(B)\nu(B_4)}
e^{\frac{\pi ia}{c}\left( \frac{2a\gamma}{c}-\frac{a\gamma\delta}{c}-\frac{\gamma}{2}+\alpha-1  \right)}
\widetilde{\mathcal{R}2}(a,c;\tau).
\end{align*}
To complete the proof we simply note that
\begin{align*}
e^{\frac{\pi ia}{c}\left( \frac{2a\gamma}{c}-\frac{a\gamma\delta}{c}-\frac{\gamma}{2}+\alpha-1  \right)}
&=1
\end{align*}
for $B\in\Gamma_0(4)\cap\Gamma_0(2c^2)\cap\Gamma_1(2c)$.
\end{proof}
The rest of the transformation formulas have similar proofs. We apply 
\eqref{EqTildeMuModular} or \eqref{EqModularTransformationAl}
followed by \eqref{EqTildeMuElliptic} or \eqref{EqEllipticTransformationAl} and 
reduce the various powers of $q$ and roots of unity. For this reason we will 
omit the majority of these calculations as they are routine and straightforward, 
but rather long.

\begin{proposition}\label{PropR2HarmonicMaassForm}
Suppose $a$ and $c$ integers, $c>0$, and $c\nmid 2a$.
Then $\widetilde{\mathcal{R}2}(a,c;\tau)$ is a harmonic Maass form of weight
$\frac{1}{2}$ on $\Gamma_0(4)\cap\Gamma_0(2c)\cap\Gamma_1(c)$ with multiplier
$\frac{\nu(B_2)}{\nu(B)\nu(B_4)} \cdot 
e^{\frac{\pi ia}{c}\left(\frac{2a\gamma}{c}-\frac{a\gamma\delta}{c}-\frac{\gamma}{2}+\alpha-1\right)}$
for $B=\begin{psmallmatrix}\alpha&\beta\\\gamma&\delta\end{psmallmatrix}$.
Furthermore, $\widetilde{\mathcal{R}2}(a,c;8\tau)$ is a harmonic Maass form of 
weight $\frac{1}{2}$ on $\Gamma_0(256)\cap\Gamma_0(16c^2)\cap\Gamma_1(8)\cap\Gamma_1(2c)$.
\end{proposition}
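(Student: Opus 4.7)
The plan is to verify the three defining conditions of a harmonic Maass form of weight $1/2$: (i) the modular transformation law with the stated multiplier on the congruence subgroup $\Gamma_0(4)\cap\Gamma_0(2c)\cap\Gamma_1(c)$, (ii) annihilation by $\Delta_{1/2}$, and (iii) at-most linear exponential growth at every cusp. Condition (i) is already in hand: Proposition~\ref{PropositionTransformationR2Tilde} gives exactly the required formula, and the stated multiplier is a ratio of $\eta$-multipliers times a root of unity, so has absolute value one.

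For (ii), I would split $\widetilde{\mathcal{R}2}(a,c;\tau)=\mathcal{R}2(a,c;\tau)+f^-(\tau)$, where $f^-$ denotes the two Eichler integrals in the definition. The summand $\mathcal{R}2(a,c;\tau)$ is holomorphic on $\mathcal{H}$ and is therefore killed by $\xi_{1/2}=2iy^{1/2}\overline{\partial/\partial\overline{\tau}}$. Applying $\xi_{1/2}$ and differentiating under the integral sign, together with the standard identity for the antiholomorphic derivative of $(-i(w+\tau))^{-1/2}$, yields that $\xi_{1/2}(f^-)$ is, up to a nonzero constant, a linear combination of the holomorphic weight-$3/2$ theta series $g_{\frac{3}{4},\frac{1}{2}-\frac{2a}{c}}(4\tau)$ and $g_{\frac{1}{4},\frac{1}{2}-\frac{2a}{c}}(4\tau)$. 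Since $\Delta_{1/2}=-\xi_{3/2}\circ\xi_{1/2}$ and $\xi_{3/2}$ annihilates any holomorphic function, condition (ii) follows. This is the standard Bringmann--Ono--Rhoades template.

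For (iii), I would use Proposition~\ref{PropositionTransformationR2Tilde} to transport every cusp to $i\infty$: for any $B\in\SLTwo$, the transform $j(B,\tau)^{-1/2}\widetilde{\mathcal{R}2}(a,c;B\tau)$ is again of the same shape. Its holomorphic part comes from an Appell sum which, after unwinding via the elliptic transformation~\eqref{EqEllipticTransformationAl} and Proposition~\ref{PropositionN2TildeToMuTilde}, has a Fourier expansion in a rational power of $q$ with only finitely many polar terms. The two Eichler integrals decay exponentially in $y$ because $g_{a,b}(4w)$ is dominated by $e^{-4\pi a^{2}\mathrm{Im}(w)}$ on the vertical path from $-\overline{\tau}$ to $i\infty$. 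This produces precisely the required linear-exponential bound at every cusp.

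The second statement is a short conjugation. If $\tilde B=\SmallMatrix{\alpha}{\beta}{\gamma}{\delta}\in\Gamma_0(256)\cap\Gamma_0(16c^2)\cap\Gamma_1(8)\cap\Gamma_1(2c)$, the divisibility $256\mid\gamma$ and $16c^{2}\mid\gamma$ together with $2c\mid\alpha-1,\delta-1$ force $B:=\tilde B_{8}=\SmallMatrix{\alpha}{8\beta}{\gamma/8}{\delta}$ to lie in $\Gamma_0(4)\cap\Gamma_0(2c^{2})\cap\Gamma_1(2c)$, the smaller subgroup appearing in the second half of Proposition~\ref{PropositionTransformationR2Tilde}. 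Since $8\tilde B\tau=B(8\tau)$, the transformation of $\widetilde{\mathcal{R}2}(a,c;\cdot)$ under $\tilde B$ is controlled by the simplified formula with a pure $\eta$-multiplier, and the harmonicity plus cusp condition transfer mechanically under the substitution $\tau\mapsto 8\tau$. The main obstacle I anticipate is step (iii): extracting the finite principal part uniformly in $B$ requires a careful application of~\eqref{EqEllipticTransformationAl} to the Appell sum, and one must verify case-by-case that no representative produces an unbounded negative $q$-exponent.
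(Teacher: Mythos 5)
Your steps (i) and (ii) are sound and match the paper: the transformation law is exactly Proposition \ref{PropositionTransformationR2Tilde}, and for harmonicity the paper argues through the $R$-function form of the non-holomorphic part together with Lemma 1.8 of \cite{Zwegers} rather than by applying $\xi_{1/2}$ to the Eichler integrals, but the two arguments are equivalent in content.

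The genuine gap is in step (iii). Proposition \ref{PropositionTransformationR2Tilde} only describes $\widetilde{\mathcal{R}2}(a,c;B\tau)$ for $B$ in the congruence subgroup, so it cannot be used to ``transport every cusp to $i\infty$''; for a general cusp $\alpha/\gamma$ you must take an arbitrary $B\in\SLTwo$, and then the modular law \eqref{EqModularTransformationAl} for $\widehat{A}_2(\cdot\,;2\tau)$ does not apply directly because $B_2$ need not be integral when $\gamma$ is odd, while \eqref{EqEllipticTransformationAl} alone never moves a cusp to infinity. You flag this yourself as ``the main obstacle,'' but it is precisely the technical content of the cusp condition, so the proposal as written does not close it. The paper's device is to pass to the $\tmu(\cdot\,;4\tau)$ representation, factor $\SmallMatrix{4\alpha}{4\beta}{\gamma}{\delta}=LC$ with $L\in\SLTwo$ and $C$ upper-triangular (so $4B\tau=LC\tau$), apply \eqref{EqTildeMuModular} to $L$, and then read off at most linear exponential growth directly from the definition of $\tmu$, using that $e^{2\pi iC\tau}$ is a fixed root of unity times $q^{g^2/4}$ with $g=\gcd(4,\gamma)$; note this bounds the holomorphic and non-holomorphic pieces simultaneously, so no separate analysis of the Eichler integrals at general cusps is needed. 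A second, smaller omission concerns $\widetilde{\mathcal{R}2}(a,c;8\tau)$: the assertion is that it is a weight-$\tfrac12$ harmonic Maass form in the standard theta-multiplier sense, so after reducing to the multiplier $\nu(B_{16})/(\nu(B_8)\nu(B_{32}))$ one must still compute via \eqref{EqEtaMultipler} that this equals $\Jac{\gamma}{\delta}\exp\left(\tfrac{\pi i}{4}\left(\tfrac{\gamma\delta}{32}-\delta+1\right)\right)$ on $\Gamma_0(32)$ and that the exponential factor is trivial on $\Gamma_0(256)\cap\Gamma_1(8)$; your proposal stops at ``a pure $\eta$-multiplier'' without this identification.
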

\begin{proof}
The transformation formula for $\widetilde{\mathcal{R}2}(a,c;\tau)$ 
is Proposition \ref{PropositionTransformationR2Tilde}. Since
$\mathcal{R}2(a,c;\tau)$ is holomorphic in $\tau$, to show
that $\Delta_{\frac{1}{2}}$ annihilates $\widetilde{\mathcal{R}2}(a,c;\tau)$,
it is sufficient to verify that 
$\sqrt{y}\frac{\partial}{\partial\overline{\tau}}q^{-\frac{1}{8}}R(\frac{2a}{c}\pm\tau;4\tau)$
is holomorphic in $\overline{\tau}$. However this follows by Lemma 1.8 of \cite{Zwegers}.

While the growth condition at the cusps follows from the fact that 
$\frac{\eta(2\tau)}{\eta(\tau)\eta(4\tau)}$ is a modular form,
the transformation formula
for $\widehat{A}_2$, and properties of $R$, we will later require a
more precise bound for the holomorphic part. To establish this bound, it is
more convenient to work with $\tmu$. In particular,
we suppose $\alpha$ and $\gamma$ are integers with $\gcd(\alpha,\gamma)=1$
and take 
$B=\begin{psmallmatrix}\alpha&\beta\\\gamma&\delta\end{psmallmatrix}\in\SLTwo$.
We set $g=\gcd(\gamma,4)$, $m=\frac{4\alpha}{g}$, and $r=\frac{\gamma}{g}$.
Since $\gcd(m,r)=1$, we take
$L=\begin{psmallmatrix}m&n\\r&s\end{psmallmatrix}\in\SLTwo$ and set
\begin{align*}
C &= L^{-1}\begin{pmatrix}4\alpha&4\beta\\\gamma&\delta\end{pmatrix}
=\begin{pmatrix}g&4\beta s -\delta n\\0& 4/g \end{pmatrix}.
\end{align*}
This gives that $4B\tau=LC\tau$ and 
$(\gamma\tau+\delta)=j(LC,\tau)=j(L,C\tau)\cdot j(C,\tau)=j(L,C\tau)\cdot\frac{4}{g}$.
By \eqref{EqTildeMuModular} we find that
\begin{align*}
&(\gamma\tau+\delta)^{-\frac{1}{2}}\widetilde{\mathcal{R}2}(a,c;B\tau)
\\
&=
(\gamma\tau+\delta)^{-\frac{1}{2}}i(1-\z{c}{a})\exp\left(-\frac{\pi iLC\tau}{16}\right)
\left(
\z{c}{a}\tmu\left(\frac{2a}{c},-\frac{LC\tau}{4};LC\tau\right)
-\tmu\left(\frac{2a}{c},\frac{LC\tau}{4};LC\tau\right)
\right)
\\
&=
\epsilon_1\exp\left(2\pi iC\tau\left( -\frac{m^2}{32}-\frac{2a^2r^2}{c^2}-\frac{amr}{2c}  \right)\right)
\tmu\left(\frac{2arC\tau+2as}{c},\frac{-mC\tau-s}{4};C\tau \right)
\\&\quad+
\epsilon_2\exp\left(2\pi iC\tau\left( -\frac{m^2}{32}-\frac{2a^2r^2}{c^2}+\frac{amr}{2c}  \right)\right)
\tmu\left(\frac{2arC\tau+2as}{c},\frac{mC\tau+s}{4};C\tau \right)
,
\end{align*}
where $\epsilon_1$ and $\epsilon_2$ are non-zero constants.
Upon noting $e^{2\pi iC\tau}=\epsilon_3 q^{\frac{g^2}{4}}$,
where $\epsilon_3$ is a fixed root of unity, we find that 
$(\gamma\tau+\delta)^{-\frac{1}{2}}\widetilde{\mathcal{R}2}(a,c;B\tau)$
has at most linear exponential growth as $y\rightarrow \infty$.

To determine the transformation formula for $\widetilde{\mathcal{R}2}(a,c;8\tau)$,
we first suppose $B=\begin{psmallmatrix}\alpha&\beta\\\gamma&\delta\end{psmallmatrix}
\in \Gamma_0(256)\cap\Gamma_0(16c^2)\cap\Gamma_1(8)\cap\Gamma_1(2c)$.
Since $B_{8}\in \Gamma_0(4)\cap\Gamma_0(2c^2)\cap\Gamma_1(2c)$,
we have by Proposition \ref{PropositionTransformationR2Tilde}
that
$\widetilde{\mathcal{R}2}(a,c;8\tau)$ has multiplier
$\frac{\nu(B_{16})}{\nu(B_8)\nu(B_{32})}$. A direct calculation using
\eqref{EqEtaMultipler} reveals that on $\Gamma_0(32)$
\begin{align*}
\frac{\nu(B_{16})}{\nu(B_8)\nu(B_{32})}
&=
\Jac{\gamma}{\delta}\exp\left(\frac{\pi i}{4}\left(\frac{\gamma\delta}{32}-\delta+1\right)\right).
\end{align*}
However, on $\Gamma_0(256)\cap\Gamma_1(8)$ we have
$
\exp\left(\frac{\pi i}{4}\left(\frac{\gamma\delta}{32}-\delta+1\right)\right)
=1$,
so that $\widetilde{\mathcal{R}2}(a,c;8\tau)$ transforms as claimed.
\end{proof}
Upon noting 
that $\Gamma_0(256)\cap\Gamma_0(16c^2)\cap\cap\Gamma_1(8)\cap\Gamma_1(2c)$
can be abbreviated to 
$\Gamma_0(256t^2)\cap\Gamma_1(8t)$ where
$t=\frac{c}{gcd(4,c)}$,
we have now proved part $(1)$ of Theorem \ref{TheoremModularity}.

\begin{proposition}
Suppose $a$ and $c$ are integers, $c>0$, and $c\nmid 2a$.
The non-holomorphic part of $\widetilde{\mathcal{R}2}(a,c;\tau)$
is
\begin{align*}
	&\frac{(1-\z{c}{a})\z{c}{-a}}{2\sqrt{\pi}}
	\sum_{n=-\infty}^\infty
		(-1)^n {\normalfont\mbox{sgn}}(n+\tfrac{1}{4})
		(\z{c}{-2an} - \z{c}{2an+a})   
		q^{-2n^2-n-\frac{1}{8}} 
		\Gamma(\tfrac{1}{2}, 8\pi(n+\tfrac{1}{4})^2 y  )	
	\\
	&=
	\frac{i^{1-c} (1-\z{c}{a})\z{c}{-a}}{2}
	\sum_{k=0}^{c-1}
		(-1)^k (\z{c}{-2ak} - \z{c}{2ak+a})
		q^{-\frac{(1+4k-2c)^2}{8} }  
		R( c(1+4k-2c)\tau + \tfrac{1-c}{2}; 4c^2\tau)
	.
\end{align*}
\end{proposition}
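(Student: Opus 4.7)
The plan is to begin from the explicit expression for the non-holomorphic part of $\widetilde{\mathcal R2}(a,c;\tau)$ that was already isolated in the proof of Proposition~\ref{PropositionN2TildeToMuTilde}, namely
$$
\frac{(\z{c}{-a}-1)\,q^{-1/8}}{2}\bigl(\z{c}{a}R(\tfrac{2a}{c}-\tau;4\tau)-R(\tfrac{2a}{c}+\tau;4\tau)\bigr),
$$
and observe that $(\z{c}{-a}-1)=(1-\z{c}{a})\z{c}{-a}$. It then remains to reshape this expression into each of the two claimed forms.

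For the first equality, I would substitute the defining series of $R(z;\tau_0)$ with $\tau_0=4\tau$ and $z=\tfrac{2a}{c}\pm\tau$, noting that $a_R=\pm 1/4$ in the two cases. Parameterizing $n=m+\tfrac12$ and expanding the quadratic exponents produces, for the $+\tau$ case, factors $q^{-2m^2-3m-1}\z{c}{-(2m+1)a}$ and, for the $-\tau$ case, factors $q^{-2m^2-m}\z{c}{-(2m+1)a}$. To combine the two series I would apply the substitution $m\mapsto -m-1$ in the $+\tau$ sum, using that $E$ is odd to send $\mathrm{sgn}(m+\tfrac12)-E((m+\tfrac34)\sqrt{8y})$ to $-(\mathrm{sgn}(m+\tfrac12)-E((m+\tfrac14)\sqrt{8y}))$ and to convert $\z{c}{-(2m+1)a}$ into $\z{c}{2ma+a}$. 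The resulting single sum over $m\in\mathbb Z$ carries the coefficient $\z{c}{-2ma}-\z{c}{2ma+a}$. Applying the standard identity $\mathrm{sgn}(x)-E(2x\sqrt{2y})=\tfrac{\mathrm{sgn}(x)}{\sqrt\pi}\,\Gamma(\tfrac12,8\pi x^2 y)$ with $x=m+\tfrac14$ (and using $\mathrm{sgn}(m+\tfrac12)=\mathrm{sgn}(m+\tfrac14)$ for $m\in\mathbb Z$) converts the $\mathrm{sgn}-E$ factor into an incomplete gamma, producing the first form with the required constant $1/(2\sqrt\pi)$.

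For the second equality, I would $c$-dissect the sum just obtained by writing $m=c\ell+k$ with $0\le k<c$ and $\ell\in\mathbb Z$. Since $\z{c}{-2ma}-\z{c}{2ma+a}$ depends on $m$ only modulo $c$, the $k$-factor pulls out, yielding the outer sum $\sum_{k=0}^{c-1}$ with the correct $(\z{c}{-2ak}-\z{c}{2ak+a})$. For each fixed $k$ I would recognize the remaining sum over $\ell$, after factoring out $q^{-(1+4k-2c)^2/8}$, as precisely the defining series of $R\bigl(c(1+4k-2c)\tau+\tfrac{1-c}{2};4c^2\tau\bigr)$. Concretely, parameterizing the $R$-series index as $n=\ell+\tfrac12$ one computes $a_R=(1+4k-2c)/(4c)$, so that $(n+a_R)\sqrt{2\cdot 4c^2 y}=2(c\ell+k+\tfrac14)\sqrt{2y}$ and the quadratic $\tau$-exponent combines with the prefactor $q^{-(1+4k-2c)^2/8}$ to reproduce $q^{-2(c\ell+k)^2-(c\ell+k)-1/8}$, matching the first form index-by-index.

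The main obstacle is the careful bookkeeping of signs, half-integer shifts, and roots of unity. In particular the $m\mapsto -m-1$ reindexing requires correctly tracking the flip of $\mathrm{sgn}$, the parity of $(-1)^m$, and the $\zeta$-exponents; and in the $c$-dissection one must verify that the phase $(-1)^{n-1/2}e^{-\pi in(1-c)}=(-1)^\ell(-1)^{\ell(c-1)}i^{c-1}$ from the $R$-series, combined with the prefactor $i^{1-c}$ and the target's $(-1)^k$, produces exactly the sign $(-1)^{c\ell+k}=(-1)^m$ in both the parities of $c$.
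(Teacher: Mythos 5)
Your proposal is correct, and it reaches both claimed forms by a route that differs in its technical machinery from the paper's. The paper starts from the integral definition of the non-holomorphic part, integrates the theta series $g_{\frac14,\frac12-\frac{2a}{c}}$ and $g_{\frac34,\frac12-\frac{2a}{c}}$ termwise (via the substitution $w=-\tfrac{t}{4\pi i(n+3/4)^2}-\tau$) to produce the incomplete Gamma series, and then, after the $c$-dissection, converts the Gammas back into integrals of $g_{\frac{1+4k}{4c},\frac{c}{2}}(4c^2w)$ and invokes Zwegers' Theorem 1.16 to land on the $R$-functions. You instead start from the equivalent $R$-combination $\tfrac{(\z{c}{-a}-1)q^{-1/8}}{2}\bigl(\z{c}{a}R(\tfrac{2a}{c}-\tau;4\tau)-R(\tfrac{2a}{c}+\tau;4\tau)\bigr)$ already established in the proof of Proposition \ref{PropositionN2TildeToMuTilde}, expand the defining series of $R$ directly, and use the elementary identity $\mbox{sgn}(x)-E(2x\sqrt{2y})=\tfrac{\mbox{sgn}(x)}{\sqrt{\pi}}\Gamma(\tfrac12,8\pi x^2y)$ in place of any integration; for the second form you expand the target $R(c(1+4k-2c)\tau+\tfrac{1-c}{2};4c^2\tau)$ as a series and match it index-by-index rather than passing back through integrals and Theorem 1.16. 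Your bookkeeping is right at every point I checked: $a_R=\pm\tfrac14$ for the two shifts, the exponents $q^{-2m^2-3m-1}$ and $q^{-2m^2-m}$, the sign flips under $m\mapsto -m-1$, the observation $\mbox{sgn}(m+\tfrac12)=\mbox{sgn}(m+\tfrac14)$, the computation $a_R=\tfrac{1+4k-2c}{4c}$ in the dissected $R$, the collapse of the exponents to $-2(c\ell+k+\tfrac14)^2$, and the phase $(-1)^{\ell c}i^{c-1}$ cancelling against $i^{1-c}$ to leave $(-1)^{c\ell+k}=(-1)^m$. What your route buys is self-containedness: no termwise integration and no appeal to Zwegers' integral representation of $R$ beyond what Proposition \ref{PropositionN2TildeToMuTilde} already used; what it gives up is that the paper's version exhibits the non-holomorphic part explicitly as a period integral of the shifted theta functions $g_{\frac{1+4k}{4c},\frac{c}{2}}$, which is the form one wants when identifying the shadow.
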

\begin{proof}
The non-holomorphic part of $\widetilde{\mathcal{R}2}(a,c;\tau)$ is
\begin{align*}
	-i(1-\z{c}{a})\z{2c}{-a}\left(
		\exp\left(\tfrac{\pi i}{4}\right)
		\int_{-\overline{\tau}}^{i\infty}
		\frac{ g_{\frac{3}{4},\frac{1}{2}-\frac{2a}{c}}(4w)}
			{\sqrt{-i(w+\tau)}}dw
		+
		\exp\left(\tfrac{-\pi i}{4}\right)
		\int_{-\overline{\tau}}^{i\infty}
		\frac{ g_{\frac{1}{4},\frac{1}{2}-\frac{2a}{c}}(4w)}
			{\sqrt{-i(w+\tau)}}dw
	\right)	
	.
\end{align*}
We find that
\begin{align*}
	\int_{-\overline{\tau}}^{i\infty}
	\frac{ g_{\frac{3}{4},\frac{1}{2}-\frac{2a}{c}}(4w)}
		{\sqrt{-i(w+\tau)}}dw
	&=
	\sum_{n=-\infty}^\infty
		(n+\tfrac{3}{4})
	\int_{-\overline{\tau}}^{i\infty}
		\frac{e^{\pi i(n+\frac{3}{4})^2 4w + 2\pi i(n+\frac{3}{4})(\frac{1}{2}-\frac{2a}{c})  } }
		{\sqrt{-i(w+\tau)}}dw	
	\\
	&=
	\sum_{n=-\infty}^\infty
		\frac{(-1)^{n+1} 
			\z{c}{-2an}
			\z{2c}{-3a}
			e^{ \frac{3\pi i}{4} }
		}{4\pi i(n+\frac{3}{4})}
	\int_{8\pi(n+\frac{3}{4})^2 y }^{\infty}
		\frac{e^{-t} e^{-4\pi i(n+\frac{3}{4})^2\tau } }
		{\sqrt{\frac{t}{4\pi (n+\frac{3}{4})^2} }}dt	
	\\
	&=
	\frac{i e^{\frac{3\pi i}{4}} \z{2c}{-3a} }
		{2\sqrt{\pi}}
	\sum_{n=-\infty}^\infty
		(-1)^n \z{c}{-2an} q^{-2n^2-3n - \frac{9}{8}} 
		\mbox{sgn}(n+\tfrac{3}{4})
		\Gamma(\tfrac{1}{2}, 8\pi(n+\tfrac{3}{4})^2 y  )	
,
\end{align*}
where we have used the substitution 
$w=-\frac{t}{4\pi i(n+\frac{3}{4})^2}-\tau$.
Similarly we calculate that
\begin{align*}
	\int_{-\overline{\tau}}^{i\infty}
	\frac{ g_{\frac{1}{4},\frac{1}{2}-\frac{2a}{c}}(4w)}
		{\sqrt{-i(w+\tau)}}dw
	&=
	\frac{i e^{\frac{\pi i}{4}} \z{2c}{-a} }
		{2\sqrt{\pi}}
	\sum_{n=-\infty}^\infty
		(-1)^n \z{c}{-2an} q^{-2n^2-n - \frac{1}{8}} 
		\mbox{sgn}(n+\tfrac{1}{4})
		\Gamma(\tfrac{1}{2}, 8\pi(n+\tfrac{1}{4})^2 y  ).	
\end{align*}
With $n\mapsto -n-1$ we find that
\begin{align*}
	&
	\sum_{n=-\infty}^\infty
		(-1)^n \z{c}{-2an} q^{-2n^2-3n - \frac{9}{8}} 
		\mbox{sgn}(n+\tfrac{3}{4})
		\Gamma(\tfrac{1}{2}, 8\pi(n+\tfrac{3}{4})^2 y  )	
	\\
	&=
	\sum_{n=-\infty}^\infty
		(-1)^{n} \z{c}{2an+2a} q^{-2n^2-n - \frac{1}{8}} 
		\mbox{sgn}(n+\tfrac{1}{4})
		\Gamma(\tfrac{1}{2}, 8\pi(n+\tfrac{1}{4})^2 y  )	
.
\end{align*}
Thus the non-holomorphic part of $\widetilde{\mathcal{R}2}(a,c;\tau)$ is
\begin{align*}
	&\frac{(1-\z{c}{a})\z{c}{-a}}{2\sqrt{\pi}}
	\sum_{n=-\infty}^\infty
		(-1)^n {\normalfont\mbox{sgn}}(n+\tfrac{1}{4})
		(\z{c}{-2an} - \z{c}{2an+a})   
		q^{-2n^2-n-\frac{1}{8}} 
		\Gamma(\tfrac{1}{2}, 8\pi(n+\tfrac{1}{4})^2 y  )	
	\\
	&=		
	\frac{(1-\z{c}{a})\z{c}{-a}}{2\sqrt{\pi}}
	\sum_{k=0}^{c-1}
		(-1)^k (\z{c}{-2ak} - \z{c}{2ak+a})
	\sum_{n=-\infty}^\infty
		(-1)^{cn}
		{\normalfont\mbox{sgn}}(cn+k+\tfrac{1}{4})
		q^{-2(cn+k+\frac{1}{4})^2 } 
		\Gamma(\tfrac{1}{2}, 8\pi(cn+k+\tfrac{1}{4})^2 y  )	
	\\
	&=		
	\frac{(1-\z{c}{a})\z{c}{-a}}{2\sqrt{\pi}}
	\sum_{k=0}^{c-1}
		(-1)^k (\z{c}{-2ak} - \z{c}{2ak+a})
	\sum_{n=-\infty}^\infty
		(-1)^{cn}
		{\normalfont\mbox{sgn}}(n+\tfrac{1+4k}{4c})
		q^{-2c^2(n+\frac{1+4k}{4c})^2 } 
	\int_{8\pi c^2(n+\frac{1+4k}{4c})^2 y}^\infty		
		\frac{e^{-t}}{\sqrt{t}} dt		
	\\
	&=		
	-ic(1-\z{c}{a})\z{c}{-a}
	\sum_{k=0}^{c-1}
		(-1)^k (\z{c}{-2ak} - \z{c}{2ak+a})
	\sum_{n=-\infty}^\infty
		(-1)^{cn}  (n+\tfrac{1+4k}{4c})
	\int_{-\overline{\tau}}^{i\infty}		
		\frac{e^{4\pi iwc^2(n+\frac{1+4k}{4c})^2 } }  
		{\sqrt{  -i(w+\tau) }} dw		
	\\
	&=		
	-ic e^{-\frac{\pi i}{4}}(1-\z{c}{a})\z{c}{-a}
	\sum_{k=0}^{c-1}
		(\z{c}{-2ak} - \z{c}{2ak+a})
	\int_{-\overline{\tau}}^{i\infty}
	\frac{ g_{\frac{1+4k}{4c}, \frac{c}{2}}(4c^2w) }
	{\sqrt{  -i(w+\tau) }} dw	
	\\
	&=		
	-\frac{i e^{-\frac{\pi i}{4}}(1-\z{c}{a})\z{c}{-a}}{2}
	\sum_{k=0}^{c-1}
		(\z{c}{-2ak} - \z{c}{2ak+a})
	\int_{-\overline{4c^2\tau}}^{i\infty}
	\frac{ g_{\frac{1+4k-2c}{4c}+\frac{1}{2}, \frac{c-1}{2}+\frac{1}{2}}(z) }
	{\sqrt{  -i(z+4c^2\tau) }} dz	
	\\	
	&=		
	\frac{i^{1-c} (1-\z{c}{a})\z{c}{-a}}{2}
	\sum_{k=0}^{c-1}
		(-1)^k (\z{c}{-2ak} - \z{c}{2ak+a})
		q^{-\frac{(1+4k-2c)^2}{8} }  
		R( c(1+4k-2c)\tau + \tfrac{1-c}{2}; 4c^2\tau)
.
\end{align*}
We note that in the second to last equality we have used
Theorem 1.16 of \cite{Zwegers}, which is valid since
$ -\frac{1}{2} < \frac{1+4k-2c}{4c} < \frac{1}{2}$
for $0\le k \le c-1$.
\end{proof}

We now see the definitions of $\mathcal{S}(k,c;\tau)$ and 
$\widetilde{\mathcal{S}}(k,c;\tau)$ are
well motivated. We recall that Proposition \ref{PropositionSTildeToMuTilde}
states
\begin{align*}
\widetilde{\mathcal{}S}(k;c;\tau)
&=
	q^{-\frac{(1+4k-2c)^2}{8}}
	\tmu\left( (1+4k-c)c\tau , c^2\tau + \frac{(c-1)}{2}  ; 4c^2\tau \right)
.
\end{align*}
From this it is apparent that we have chosen $\widetilde{\mathcal{S}}(k,c;\tau)$
so that the non-holomorphic part is 
$\frac{i}{2}q^{-\frac{(1+4k-2c)^2}{8}}R( c(1+4k-2c)\tau + \tfrac{1-c}{2}; 4c^2\tau)$,
which matches with the terms of the non-holomorphic part 
of $\widetilde{\mathcal{R}2}(a,c;\tau)$.
Furthermore, $\z{c}{-2ak}-\z{c}{2ak+a}=0$ when $1+4k-c\equiv 0\pmod{4c}$.

\begin{proposition}\label{PropTransformationForSTilde}
Suppose $k$ and $c$ are integers, $c>0$, and $1+4k-c\not\equiv0\pmod{4c}$.
If $B\in\Gamma_0(\gcd(c,2)4c^2)\cap\Gamma_1(4c)$,
then
\begin{align*}
\widetilde{\mathcal{S}}(k,c;B\tau)
&=
\frac{\nu(B_2)}{\nu(B)\nu(B_4)}
\sqrt{\gamma\tau+\delta}\widetilde{\mathcal{S}}(k,c;\tau)
.
\end{align*}
\end{proposition}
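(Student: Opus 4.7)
The plan is to mirror the proof of Proposition \ref{PropositionTransformationR2Tilde} using the $\widehat{A}_1$ presentation of $\widetilde{\mathcal{S}}(k,c;\tau)$ supplied by Proposition \ref{PropositionSTildeToMuTilde}. When $c$ is odd we have
\begin{align*}
\widetilde{\mathcal{S}}(k,c;\tau)
&= \frac{i^{c}\,\eta(2c^2\tau)}{\eta(c^2\tau)\eta(4c^2\tau)}\,
q^{-\frac{(3c-4k-1)(c-4k-1)}{8}}\,
\widehat{A}_1\!\left((1+4k-c)c\tau,\, c^2\tau+\tfrac{c-1}{2};\, 4c^2\tau\right),
\end{align*}
and an analogous expression with $\frac{\eta(c^2\tau)}{\eta(2c^2\tau)^2}$ when $c$ is even. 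So the transformation will decouple into an eta-quotient piece (giving a $\nu$-multiplier and a $(\gamma\tau+\delta)^{-1/2}$ factor), a $\widehat{A}_1$ piece (giving a $(\gamma\tau+\delta)$ factor together with an exponential), and the explicit $q$-power prefactor.

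First I would write $M=B_{4c^2}=\SmallMatrix{\alpha}{4c^2\beta}{\gamma/(4c^2)}{\delta}$, which lies in $\SLTwo$ precisely because the hypothesis forces $\gcd(c,2)\cdot 4c^2\mid\gamma$, and observe $4c^2 B\tau=M(4c^2\tau)$ with $j(M,4c^2\tau)=\gamma\tau+\delta$. I would then apply \eqref{EqModularTransformationAl} to $\widehat{A}_1$ with the matrix $M$, after rewriting the arguments of $\widehat{A}_1$ at $\tau\mapsto B\tau$ in the form $\frac{u}{\gamma\tau+\delta}$, $\frac{v}{\gamma\tau+\delta}$ modulo a shift by an element of $\mathbb{Z}+(4c^2\tau)\mathbb{Z}$; the shift is then absorbed via the elliptic transformation \eqref{EqEllipticTransformationAl}. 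The congruences $\alpha\equiv\delta\equiv 1\pmod{4c}$ and $\gamma\equiv 0\pmod{\gcd(c,2)\cdot 4c^2}$ are exactly what makes these shifts have integer coefficients, and in particular make $\frac{\alpha-1}{4c}$, $\frac{(1+4k-c)c\beta}{\cdot}$, and similar terms integers so that the sign $(-1)^{k+l+m+n}$ in \eqref{EqTildeMuElliptic} / the analogous sign for $\widehat{A}_1$ becomes trivial.

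Next I would collect the exponential factors coming from \eqref{EqModularTransformationAl} (of shape $e^{\pi i\gamma(\ldots)/(\gamma\tau+\delta)}$) together with the exponentials from the elliptic shifts and the transformation of the explicit prefactor $q^{-(3c-4k-1)(c-4k-1)/8}$ under $\tau\mapsto B\tau$. The arithmetic of the congruence conditions ensures these factors cancel exactly, leaving only the eta-multiplier combination $\frac{\nu(B_{2c^2})}{\nu(B_{c^2})\nu(B_{4c^2})}$ (in the odd case) or $\frac{\nu(B_{c^2})}{\nu(B_{2c^2})^2}$ (in the even case) together with the claimed $(\gamma\tau+\delta)^{1/2}$ factor. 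The main obstacle is then a purely eta-multiplier computation: one must verify that on $\Gamma_0(\gcd(c,2)\cdot 4c^2)\cap\Gamma_1(4c)$ these two eta-multiplier ratios both collapse to $\frac{\nu(B_2)}{\nu(B)\nu(B_4)}$.

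This last identification is carried out directly from \eqref{EqEtaMultipler}, splitting by the parity of $\delta$ and of $\gamma/(4c^2)$. The quadratic Jacobi symbols agree because $\delta$ is fixed (modulo $4c$, in particular modulo $8$) and the lower entries differ by perfect squares $c^2$ or $(2c)^2$; the exponential parts are polynomial expressions in $\alpha,\beta,\gamma,\delta$ and $c$ which reduce modulo the congruence conditions on $B$ to the same value in both ratios. As in the remark following Proposition \ref{PropositionTransformationR2Tilde}, the individual calculations are long but entirely routine, so I would only write out the structural steps above and leave the explicit eta-multiplier bookkeeping to the reader.
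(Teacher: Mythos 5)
Your outline follows the paper's route at every structural step: present $\widetilde{\mathcal{S}}(k,c;\tau)$ via Proposition \ref{PropositionSTildeToMuTilde} as an eta-quotient times $\widehat{G}(k,c;\tau):=q^{-\frac{(3c-4k-1)(c-4k-1)}{8}}\widehat{A}_1\left((1+4k-c)c\tau,\,c^2\tau+\tfrac{c-1}{2};\,4c^2\tau\right)$, transform the Appell factor by \eqref{EqModularTransformationAl} through $B_{4c^2}$, restore the arguments with \eqref{EqEllipticTransformationAl}, and finish with an eta-multiplier comparison. The odd-$c$ half of your plan is sound: there the residual phase is $\exp\left(-\tfrac{\pi i\alpha\beta}{4}(3c-4k-1)(c-4k-1)\right)=1$ because $8\mid(3c-4k-1)(c-4k-1)$, and the identity $\tfrac{\nu(B_{2c^2})}{\nu(B_{c^2})\nu(B_{4c^2})}=\tfrac{\nu(B_2)}{\nu(B)\nu(B_4)}$ really does hold on $\Gamma_0(4c^2)$ (the paper derives it from the fact that $\tfrac{\eta(2\tau)\eta(c^2\tau)\eta(4c^2\tau)}{\eta(\tau)\eta(4\tau)\eta(2c^2\tau)}$ is a modular function there, but a direct check with \eqref{EqEtaMultipler} as you propose would also do).

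The even-$c$ case, however, does not work as you assert, and it is exactly the bookkeeping you deferred to the reader. For $c$ even the Appell/prefactor piece leaves a nontrivial phase, $\widehat{G}(k,c;B\tau)=\exp\left(-\tfrac{\pi i\beta}{4}(3c^2+1)\right)(\gamma\tau+\delta)\widehat{G}(k,c;\tau)$, so the exponential factors do not ``cancel exactly''; and the eta-multiplier identity you propose to verify is false in this case: for $B$ in the group of the proposition one has $\tfrac{\nu(B)\nu(B_4)\nu(B_{c^2})}{\nu(B_2)\nu(B_{2c^2})^2}=\exp\left(\tfrac{\pi i\beta(1-c^2)}{4}\right)$, which is not identically $1$ (take $\beta$ odd). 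The proposition holds only because these two nontrivial phases cancel against each other, $\exp\left(\tfrac{\pi i\beta(1-c^2)}{4}\right)\exp\left(-\tfrac{\pi i\beta(3c^2+1)}{4}\right)=\exp(-\pi i\beta c^2)=1$ for even $c$, which is precisely how the paper closes the argument. So if you executed your plan literally you would stall trying to prove a false intermediate identity; the fix is to carry the residual phase from the Appell factor along and match it with the discrepancy between $\tfrac{\nu(B_{c^2})}{\nu(B_{2c^2})^2}$ and $\tfrac{\nu(B_2)}{\nu(B)\nu(B_4)}$. A small additional point: $B_{4c^2}\in\SLTwo$ requires only $4c^2\mid\gamma$; the extra factor $\gcd(c,2)$ in the level is needed for this even-case multiplier comparison, not for integrality of the conjugated matrix.
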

\begin{proof}
To begin we let 
$$
\widehat{G}(k,c;\tau)
=
q^{-\frac{(3c-4k-1)(c-4k-1)}{8}}
\widehat{A}_1\left( (1+4k-c)c\tau, c^2\tau + \frac{(c-1)}{2};4c^2\tau  \right),
$$
so that
\begin{align*}
\widetilde{\mathcal{S}}(k,c;B\tau)
&=
\left\{ \begin{array}{ll}
	\frac{\eta(2c^2\tau)}{\eta(c^2\tau)\eta(4c^2\tau)}\widehat{G}(k,c;\tau)
	& \mbox{ if $c$ is odd,}
	\\[1ex]
	\frac{\eta(c^2\tau)}{\eta(2c^2\tau)^2}\widehat{G}(k,c;\tau)
	& \mbox{ if $c$ is even.}
\end{array}\right.
\end{align*}
Using \eqref{EqEllipticTransformationAl}, on $\Gamma_0(4c^2)$ we have that
\begin{align*}
&\widehat{G}(k,c;B\tau)
\\
&=
	\exp\left( -\frac{\pi iB\tau}{4} (3c-4k-1)(c-4k-1) \right)
	\\&\quad\cdot
	\widehat{A}_1\left(
		(1+4k-c)c\frac{(\alpha\tau+\beta)}{(\gamma\tau+\delta)},
		\frac{c^2(\alpha\tau+\beta)}{(\gamma\tau+\delta)}+\frac{(c-1)}{2}; 
		B_{4c^2}(4c^2\tau)	
	\right)
\\
&=
	(\gamma\tau+\delta)
	\exp\left( -\frac{\pi iB\tau}{4} (3c-4k-1)(c-4k-1) \right)
	\\&\quad\cdot
	\exp\left( 
		\frac{\pi i\gamma
			\left(
				-(1+4k-c)^2c^2(\alpha\tau+\beta)^2
				+2(1+4k-c)c(\alpha\tau+\beta)\left( c^2(\alpha\tau+\beta)+\frac{(c-1)(\gamma\tau+\delta)}{2}  \right)
			\right)
		}
		{4c^2(\gamma\tau+\delta)}
	\right)
	\\&\quad\cdot
	\widehat{A}_1\left(
		(1+4k-c)c(\alpha\tau+\beta),
		c^2(\alpha\tau+\beta)+\frac{(c-1)(\gamma\tau+\delta)}{2}; 
		4c^2\tau	
	\right)
\\
&=
	\exp\left( -\frac{\pi i\alpha\beta}{4}(3c-4k-1)(c-4k-1)  \right)
	q^{-\frac{\alpha^2}{8}(3c-4k-1)(c-4k-1) + \frac{\alpha\gamma}{8c}(1+4k-c)(c-1)}
	\\&\quad\cdot
	(\gamma\tau+\delta)
	\widehat{A}_1\left(
		(1+4k-c)c(\alpha\tau+\beta),
		c^2(\alpha\tau+\beta)+\frac{(c-1)(\gamma\tau+\delta)}{2}; 
		4c^2\tau	
	\right)
.	
\end{align*}
Now by \eqref{EqEllipticTransformationAl} we have that
\begin{align*}
&\widehat{A}_1\left(
	(1+4k-c)c(\alpha\tau+\beta),
	c^2(\alpha\tau+\beta)+\frac{(c-1)(\gamma\tau+\delta)}{2}; 
	4c^2\tau	
\right)
\\
&=
	\widehat{A}_1\left(
		(1+4k-c)c\tau +\tfrac{(1+4k-c)(\alpha-1)}{4c}4c^2\tau +  (1+4k-c)c\beta	,
		c^2\tau + \tfrac{(c-1)}{2}  + \left(\tfrac{(\alpha-1)}{4}+\tfrac{(c-1)\gamma}{8c^2} \right)4c^2\tau   
		+c^2\beta 
		\right.\\&\quad\left.	
		+ \tfrac{(c-1)(\delta-1)}{2}; 
		4c^2\tau	
	\right)
\\
&=
	(-1)^{\frac{(1+4k-c)(\alpha-1)}{4c}}
	\exp\left( 2\pi i(1+4k-c)c\tau\left( 
		\frac{(1+4k-c)(\alpha-1)}{4c}  	-\frac{(\alpha-1)}{4}	-\frac{(c-1)\gamma}{8c^2}
	\right)\right)
	\\&\quad\cdot
	\exp\left( -2\pi i\left(c^2\tau+\frac{(c-1)}{2}\right)\frac{(1+4k-c)(\alpha-1)}{4c} \right)
	\\&\quad\cdot
	\exp\left( 8\pi ic^2\tau\left(
		\frac{(1+4k-c)^2(\alpha-1)^2}{32c^2} 
		-
		\frac{(1+4k-c)(\alpha-1)}{4c}\left( \frac{(\alpha-1)}{4}+\frac{(c-1)\gamma}{8c^2}  \right)
	\right)\right)
	\\&\quad\cdot
	\widehat{A}_1\left( (1+4k-c)c\tau, c^2\tau + \tfrac{(c-1)}{2}; 4c^2\tau	\right)
\\
&=
	(-1)^{\frac{(1+4k-c)(\alpha-1)}{4}}
	q^{ -\frac{(1-\alpha^2)}{8}(3c-4k-1)(c-4k-1) - \frac{\alpha\gamma}{8c}(1+4k-c)(c-1)   }
	\widehat{A}_1\left( (1+4k-c)c\tau, c^2\tau + \tfrac{(c-1)}{2}; 4c^2\tau	\right)
,
\end{align*}
and so
\begin{align*}
\widehat{G}(k,c;B\tau)
&=
(-1)^{\frac{(1+4k-c)(\alpha-1)}{4}}
\exp\left( -\frac{\pi i\alpha\beta}{4}(3c-4k-1)(c-4k-1)   \right)
(\gamma\tau+\delta)
\widehat{G}(k,c;\tau)
\\
&=
\exp\left( -\frac{\pi i\alpha\beta}{4}(3c-4k-1)(c-4k-1)   \right)
(\gamma\tau+\delta)
\widehat{G}(k,c;\tau)
.
\end{align*}

We now must separately consider the cases when $c$ is odd or even. When $c$
is odd, $(3c-4k-1)(c-4k-1)$ is divisible by $8$. As such,
\begin{align*}
\widehat{G}(k,c;B\tau)
&=
(\gamma\tau+\delta)
\widehat{G}(k,c;\tau)
.
\end{align*}
By Theorem 1.64 of \cite{Ono1},
$\frac{\eta(2\tau)\eta(c^2\tau)\eta(4c^2\tau)}{\eta(\tau)\eta(4\tau)\eta(2c^2\tau)}$
is a modular function on $\Gamma_0(4c^2)$ with trivial multiplier. Thus
\begin{align*}
\frac{\nu(B_2)}{\nu(B)\nu(B_4)}
&=
\frac{\nu(B_{2c^2})}{\nu(B_{c^2})\nu(B_{4c^2})},
\end{align*}
and the case when $c$ is odd follows.

When $c$ is even, we instead have that
\begin{align*}
\widehat{G}(k,c;B\tau)
&=
\exp\left( -\frac{\pi i\beta}{4}(3c^2+1)   \right)
(\gamma\tau+\delta)
\widehat{G}(k,c;\tau)
.
\end{align*}
A direct, but lengthy, calculation with \eqref{EqEtaMultipler} reveals that
on $\Gamma_0(8c^2)\cap \Gamma(4c)$ we have
\begin{align*}
\frac{\nu(B)\nu(B_4)\nu(B_{c^2})}{\nu(B_2)\nu(B_{2c^2})^2}
&=
\exp\left( \frac{\pi i\beta(1-c^2)}{4}  \right)
.
\end{align*}
With this the case when $c$ is even follows.
\end{proof}

\begin{proposition}\label{PropositionSTildeHarmonicMaassForm}
Suppose $k$ and $c$ are integers, $c>0$, and $1+4k-c\not\equiv0\pmod{4c}$. 
Then $\widetilde{\mathcal{S}}(k,c;\tau)$ 
is a harmonic Maass form on $\Gamma_0(\gcd(c,2)4c^2)\cap\Gamma_1(4c)$ with multiplier
$\frac{\nu(B_2)}{\nu(B)\nu(B_4)}$.
\end{proposition}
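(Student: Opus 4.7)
The plan is to verify the three defining properties of a harmonic Maass form in turn. The transformation law with the claimed multiplier $\frac{\nu(B_2)}{\nu(B)\nu(B_4)}$ is precisely the content of Proposition \ref{PropTransformationForSTilde}, so that step is already complete.

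Next, I would show that $\Delta_{1/2}$ annihilates $\widetilde{\mathcal{S}}(k,c;\tau)$. Because $\mathcal{S}(k,c;\tau)$ is holomorphic in $\tau$ (under the hypothesis $1+4k-c\not\equiv 0\pmod{4c}$, which excludes the poles of $\mu$ at lattice points) it is automatically killed by $\Delta_{1/2}$, since $\Delta_k=-4y^{2-k}\partial_{\tau}y^k\partial_{\overline{\tau}}$ contains $\partial_{\overline{\tau}}$. It therefore suffices to check the non-holomorphic correction, which by Proposition \ref{PropositionSTildeToMuTilde} is a nonzero constant times $q^{-(1+4k-2c)^2/8}R(c(1+4k-2c)\tau+\tfrac{1-c}{2};4c^2\tau)$. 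By Lemma 1.8 of \cite{Zwegers}, already invoked in the proof of Proposition \ref{PropR2HarmonicMaassForm}, the function $\sqrt{y}\,\partial_{\overline{\tau}}R(z;\tau)$ is holomorphic in $\overline{\tau}$; together with the chain rule this yields $\Delta_{1/2}\widetilde{\mathcal{S}}(k,c;\tau)=0$.

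Finally, and this is the main obstacle, I must verify the growth condition at the cusps: for every $B\in\SLTwo$ the function $j(B,\tau)^{-1/2}\widetilde{\mathcal{S}}(k,c;B\tau)$ must agree with a Laurent polynomial in a fractional power of $q$ up to an $O(e^{-\epsilon y})$ error as $y\to\infty$. Following the strategy of the proof of Proposition \ref{PropR2HarmonicMaassForm}, I would start from the representation
\begin{align*}
\widetilde{\mathcal{S}}(k,c;\tau)=q^{-(1+4k-2c)^2/8}\,\tmu\left((1+4k-c)c\tau,\,c^2\tau+\tfrac{c-1}{2};\,4c^2\tau\right)
\end{align*}
supplied by Proposition \ref{PropositionSTildeToMuTilde}. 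For $B=\SmallMatrix{\alpha}{\beta}{\gamma}{\delta}\in\SLTwo$ with $\gamma\neq 0$ (the case $\gamma=0$ being trivial), set $g=\gcd(\gamma,4c^2)$, $m=4c^2\alpha/g$, $r=\gamma/g$; since $\gcd(\alpha,\gamma)=1$ one checks $\gcd(m,r)=1$, so there exist $n,s$ with $L=\SmallMatrix{m}{n}{r}{s}\in\SLTwo$. Then $C=L^{-1}\SmallMatrix{4c^2\alpha}{4c^2\beta}{\gamma}{\delta}$ is upper triangular with determinant $4c^2$ and bottom-right entry $4c^2/g$, while $4c^2B\tau=LC\tau$ and $\gamma\tau+\delta=(4c^2/g)\,j(L,C\tau)$. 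Applying \eqref{EqTildeMuModular} with the matrix $L$ to pass from $\tmu(\cdot,\cdot;LC\tau)$ down to $\tmu(\cdot,\cdot;C\tau)$, and then \eqref{EqTildeMuElliptic} to reduce the first two arguments modulo $\mathbb{Z}+C\tau\mathbb{Z}$, produces an expression of the shape $\sqrt{j(L,C\tau)}\,e^{2\pi i\,\lambda_B C\tau}\,\tmu(u_B,v_B;C\tau)$ for fixed $u_B,v_B\notin\mathbb{Z}+C\tau\mathbb{Z}$ and rational $\lambda_B$. Since $e^{2\pi iC\tau}$ equals a fixed root of unity times $q^{g^2/(4c^2)}$, and since $\tmu(u,v;C\tau)=\mu(u,v;C\tau)+\tfrac{i}{2}R(u-v;C\tau)$ has a convergent $q$-expansion with non-holomorphic part decaying like $e^{-\epsilon\,\mathrm{Im}(C\tau)}$ as $\mathrm{Im}(C\tau)\to\infty$, we obtain the required bound on $j(B,\tau)^{-1/2}\widetilde{\mathcal{S}}(k,c;B\tau)$. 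This completes the three checks and hence the proof.
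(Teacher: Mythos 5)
Your proposal is correct and follows essentially the same route as the paper: the transformation law is delegated to Proposition \ref{PropTransformationForSTilde}, the annihilation by $\Delta_{\frac{1}{2}}$ is reduced via Proposition \ref{PropositionSTildeToMuTilde} to Lemma 1.8 of \cite{Zwegers} applied to the $R$-term, and the cusp condition is checked with exactly the same matrix decomposition $4c^2B\tau=LC\tau$ (with $g=\gcd(4c^2,\gamma)$, the auxiliary matrix $L$, and the upper-triangular $C$) followed by the modular and elliptic transformations of $\tmu$ and the observation that $e^{2\pi iC\tau}$ is a fixed root of unity times $q^{g^2/(4c^2)}$. The only cosmetic difference is that you carry out the elliptic reduction of the arguments explicitly, whereas the paper stops at $\tmu$ evaluated at arguments linear in $C\tau$ and appeals to the definition of $\tmu$ (and later to Proposition \ref{PropTMuOrders}) for the growth bound; both are sound.
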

\begin{proof}
We first verify that $\widetilde{\mathcal{S}}(k,c;\tau)$ is annihilated by $\Delta_{\frac{1}{2}}$.
For this we need only verify that
\begin{align*}
	\sqrt{y}\frac{\partial}{\partial\overline{\tau}}
	q^{ -\frac{(1+4k-2c)^2}{8}}
	R( (1+4k-c)c\tau + \tfrac{(1-c)}{2} ; 4c^2\tau  )
\end{align*}
is holomorphic in $\overline{\tau}$.
However this follows from Lemma 1.8 of \cite{Zwegers}.

We next verify that $\widetilde{\mathcal{S}}(k,c;\tau)$ 
has at worst linear exponential growth at the
cusps. For this, suppose $\alpha/\gamma$ is in reduced form. We then
take $B=\SmallMatrix{\alpha}{\beta}{\gamma}{\delta}\in\SLTwo$.
We set $g=\gcd(4c^2,\gamma)$, $m=4c^2\alpha/g$, and $r=\gamma/g$.
Since $\gcd(m,r)=1$ we can take
$L=\SmallMatrix{m}{n}{r}{s}\in\SLTwo$, and set
\begin{align*}
 	C
 	&=
 		L^{-1}\TwoTwoMatrix{4c^2\alpha}{4c^2\beta}{\gamma}{\delta}
	=
		\TwoTwoMatrix{g}{4c^2\beta s - \delta n}{0}{4c^2/g}
.		
\end{align*}
We note that $j(L,C\tau) = \frac{g}{4c^2}(\gamma\tau+\delta)$.
Using \eqref{EqTildeMuModular} we compute that
\begin{align*}
&(\gamma\tau+\delta)^{-\frac{1}{2}}\widetilde{S}(k,c;B\tau)
\\
&=
(\gamma\tau+\delta)^{-\frac{1}{2}}
\exp\left( -\frac{\pi iLC\tau(1+4k-2c)^2}{16c^2} \right)
\tmu\left( \frac{(1+4k-2c)LC\tau}{4c}, \frac{LC\tau}{4}+\frac{(c-1)}{2}; LC\tau  \right)
\\
&=
\epsilon_1\cdot
\exp\left( \pi iC\tau\left(
	-\frac{m^2(1+4k-2c)^2}{16c^2}
	+\frac{mr(1+4k-2c)(c-1)}{4c}
	-\frac{r^2(c-1)^2}{4}
\right)\right)
\\&\quad\cdot
\tmu\left(
	\frac{(1+4k-c)(mC\tau +n)}{4c}, \frac{(mC\tau+n)}{4} + \frac{(c-1)(rC\tau+s)}{2}; C\tau
\right)
,
\end{align*}
where $\epsilon_1$ is a non-zero constant.
Since $e^{2\pi iC\tau} = \epsilon_2 q^{\frac{g^2}{4c^2}}$, where $\epsilon_2$
is a fixed root of unity,
we see that
$(\gamma\tau+\delta)^{-\frac{1}{2}}\widetilde{\mathcal{S}}(k,c;B\tau)$ 
has at worst linear
exponential growth as $y\rightarrow \infty$ by the definition of $\tmu$.
\end{proof}

Since $\widetilde{\mathcal{R}2}(a,c;\tau)$ and 
$\widetilde{\mathcal{S}}(k,c;\tau)$ have the same weight and multiplier
on $\Gamma_0(\gcd(c,2))4c^2)\cap\Gamma_1(4c)$,
taking the correct difference
of these terms will cancel the non-holomorphic parts and result in a modular form.
This establishes parts (2), (3), and (4) of Theorem \ref{TheoremModularity}.

To accommodate the generalized eta-quotients in the dissection of
$R2(\z{7}{};q)$ in Theorem \ref{TheoremDissection}, 
we use the following.
For $N$ a positive integer and $\rho$ an integer with $N\nmid\rho$,
we let
$$
	f_{N,\rho}(\tau)
	=
		q^{\frac{(N-2\rho)^2}{8N}}
		\aqprod{q^\rho,q^{N-\rho},q^N}{q^N}{\infty}.
$$ 
We note that $f_{N,\rho}(\tau)=f_{N,-\rho}(\tau)=f_{N,\rho+N}(\tau)$,
and so Lemma 2.1 of \cite{Biagioli} can be stated as 
\begin{align}\label{EqBiagioliTransform}
	f_{N,\rho}(B\tau)
	&=
		(-1)^{\rho\beta + \Floor{\frac{\rho\alpha}{N}} + \Floor{\frac{\rho}{N}} }
		\exp\left(\frac{\pi i\alpha\beta\rho^2}{N}\right)
		\nu( B_N )^{3}
		\sqrt{\gamma\tau+\delta}\,		
		f_{N,\rho}(\tau)
,
\end{align}
for $B=\SmallMatrix{\alpha}{\beta}{\gamma}{\delta}\in\Gamma_1(N)$.

\begin{proposition}\label{PropositionGeneralTranformationForGetaQuotient}
Suppose $c>0$ is an integer and
\begin{align*}
	f(\tau)
	&=
		\eta(4c^2\tau)^{r_0}
		\prod_{k=1}^{2c} f_{4c^2, ck}(\tau)^{r_k}
.
\end{align*}
If $B=\SmallMatrix{\alpha}{\beta}{\gamma}{\delta}\in\Gamma_0(4c^2)\cap\Gamma_1(4c)$,
then
\begin{align*}
	f(B\tau)
	&=
	(-1)^{ (c\beta + \frac{(\alpha-1)}{4c} + \frac{\beta(\alpha-1)}{4})S + \beta T }
	\exp\left( \tfrac{\pi i\beta S}{4}  \right)
	\nu( B_{4c^2} ) ^{ r_0 + 3R}
	(\gamma\tau+\delta)^{ \frac{r_0+R}{2}}
	f(\tau)
	,
\end{align*}
where
\begin{align*}
	R 
	&=
		\sum_{i=1}^{2c} r_i
	,\qquad\qquad
	S 
	=
		\sum_{\substack{i=1,\\ i\equiv 1\pmod{2} }}^{2c} r_i
	,\qquad\qquad
	T
	=
		\sum_{\substack{i=1,\\ i\equiv 2\pmod{4} }}^{2c} r_i		
.
\end{align*}
\end{proposition}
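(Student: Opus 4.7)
The plan is to combine the transformation formula \eqref{EqBiagioliTransform} applied to each $f_{4c^2,ck}(B\tau)^{r_k}$ with the standard $\eta$-transformation applied to $\eta(4c^2 B\tau)^{r_0} = \eta(B_{4c^2}(4c^2\tau))^{r_0}$, and then to collapse the accumulated roots of unity into the stated form by a case analysis on $k\bmod 4$.

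First I would extract the easy pieces. The factor $\sqrt{\gamma\tau+\delta}$ appears with exponent $r_k$ from each Biagioli factor and with exponent $r_0/2$ from the $\eta$ factor, giving $(\gamma\tau+\delta)^{(r_0+R)/2}$. Similarly the $\eta$-multiplier $\nu(B_{4c^2})$ appears with exponents $3r_k$ and $r_0$ respectively, giving $\nu(B_{4c^2})^{r_0+3R}$. The only remaining task is then to simplify the product over $k$ of $(-1)^{r_k(ck\beta + \Floor{ck\alpha/(4c^2)} + \Floor{ck/(4c^2)})}\exp(\pi i r_k\alpha\beta k^2/4)$ into the sign and phase appearing in the statement.

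The key simplifications are: since $1\le k\le 2c$, one has $\Floor{ck/(4c^2)}=\Floor{k/(4c)}=0$; and since $\alpha\equiv 1\pmod{4c}$, writing $\alpha=1+4c\ell$ with $\ell=(\alpha-1)/(4c)$ gives $\Floor{ck\alpha/(4c^2)}=\Floor{k\alpha/(4c)}=k\ell$. The resulting exponents can then be sorted according to $k\bmod 4$. For $k$ odd, $k^{2}\equiv 1\pmod 8$ reduces the phase to $\exp(\pi i\alpha\beta/4)=\exp(\pi i\beta/4)\cdot(-1)^{\beta(\alpha-1)/4}$, while the sign factor becomes $(-1)^{c\beta+\ell}$; summing over odd $k$ with weights $r_k$ produces $\exp(\pi i\beta S/4)\cdot(-1)^{S(c\beta+(\alpha-1)/(4c)+\beta(\alpha-1)/4)}$. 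For $k\equiv 2\pmod 4$, $k^{2}/4$ is odd so the phase collapses to $(-1)^{\alpha\beta}=(-1)^{\beta}$ (using that $\alpha$ is odd) while the sign factor is trivial, giving $(-1)^{\beta T}$. For $k\equiv 0\pmod 4$ every factor is trivial. Assembling the three cases yields the stated multiplier.

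The main obstacle is the bookkeeping in this last step: three sources of $\pm 1$ (the $ck\beta$ sign, the floor-function sign, and the fractional part of $\alpha\beta k^{2}/4$) must be tracked simultaneously, each converted via the congruence $\alpha\equiv 1\pmod{4c}$ into an integer combination involving $(\alpha-1)/(4c)$, and then recombined according to the residue of $k$ modulo $4$. Once this reduction is carried out, the stated formula drops out without further effort.
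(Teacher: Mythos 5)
Your proposal is correct and follows essentially the same route as the paper: apply Biagioli's transformation to each $f_{4c^2,ck}$, use $\alpha\equiv 1\pmod{4c}$ to evaluate the floor terms, and reduce the phases $\exp(\pi i\alpha\beta k^2/4)$ according to $k\bmod 4$ (the paper phrases this as $\sum_k r_k k^2\equiv S+4T\pmod 8$ rather than an explicit case split, but the computation is identical).
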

\begin{proof}
By (\ref{EqBiagioliTransform}) we have
\begin{align*}
	f_{4c^2,ck}(B\tau)
	&=
		(-1)^{kc\beta + \Floor{\frac{kc\alpha}{4c^2}} + \Floor{\frac{kc}{4c^2}} }
		\exp\left( \tfrac{ \pi i\alpha\beta(kc)^2 }{4c^2}  \right)
		\nu( B_{4c^2} )^{3}
		\sqrt{\gamma\tau+\delta}
		f_{4c^2,ck}(\tau)
	.
\end{align*}
Since $\alpha\equiv 1\pmod{4c}$ and $1\le k\le 2c$, we can write
$\alpha = 1 + \frac{(\alpha-1)}{4c}4c$ to deduce that
\begin{align*}
	\Floor{\frac{kc\alpha}{4c^2}}
	&=
	\frac{(\alpha-1)k}{4c}
	.
\end{align*}
Thus
\begin{align*}
	(-1)^{kc\beta + \Floor{\frac{kc\alpha}{4c^2}} + \Floor{\frac{kc}{4c^2}} }
	&=
	(-1)^{ k (c\beta + \frac{(\alpha-1)}{4c} ) }	
	.
\end{align*}
Next we have that
\begin{align*}
	\exp\left( \tfrac{\pi i\alpha\beta(kc)^2}{4c^2} \right)
	&=
	(-1)^{k \tfrac{\beta(\alpha-1)}{4}}
	\exp\left( \tfrac{\pi i\beta k^2}{4} \right)
.
\end{align*}
So in fact
\begin{align*}
	f_{4c^2,ck}(B\tau)
	&=
	(-1)^{k( c\beta + \frac{(\alpha-1)}{4c} + \frac{\beta(\alpha-1)}{4} )}
	\exp\left( \tfrac{\pi i \beta k^2}{4} \right)
	\nu( B_{4c^2} )^3
	\sqrt{\gamma\tau+\delta}
	f_{4c^2,ck}(\tau)
	.
\end{align*}
Thus
\begin{align*}
	f(B\tau)
	&=
		\left(
			\prod_{k=1}^{2c}
				(-1)^{r_k k( c\beta + \frac{(\alpha-1)}{4c} + \frac{\beta(\alpha-1)}{4} ) }	
				\exp\left(  \tfrac{\pi i\beta k^2 r_k}{4}  \right)
		\right)	
		\nu( B_{4c^2} )^{r_0 + 3R}
		(\gamma\tau+\delta)^{\frac{r_0+R}{2}}
		f(\tau)
	\\
	&=
		(-1)^{ (c\beta + \frac{(\alpha-1)}{4c} + \frac{\beta(\alpha-1)}{4} )S }	
		(-1)^{\beta T}
		\exp\left(  \tfrac{\pi i\beta S}{4}  \right)
		\nu( B_{4c^2} )^{r_0 + 3R}
		(\gamma\tau+\delta)^{\frac{r_0+R}{2}}
		f(\tau)
	.		
\end{align*}
\end{proof}

\begin{corollary}\label{CorollaryProductsModular}
Suppose $c>0$ is an odd integer,
\begin{align*}
	f(\tau)
	&=
		\eta(4c^2\tau)^{r_0}
		\prod_{k=1}^{2c} f_{4c^2, ck}(\tau)^{r_k}
	,
\end{align*}
and
\begin{align*}
	R 
	&=
		\sum_{i=1}^{2c} r_i
	,\qquad\qquad
	S 
	=
		\sum_{\substack{i=1,\\ i\equiv 1\pmod{2} }}^{2c} r_i
	,\qquad\qquad
	T
	=
		\sum_{\substack{i=1,\\ i\equiv 2\pmod{4} }}^{2c} r_i		
.
\end{align*}
Suppose that $r_0+R=1$, $R\equiv -2 \pmod{12}$, $S\equiv -1 \pmod{8}$,
and $T\equiv 0\pmod{2}$.
If $B=\SmallMatrix{\alpha}{\beta}{\gamma}{\delta}\in\Gamma_0(4c^2)\cap\Gamma_1(4c)$,
then
\begin{align*}
	f(B\tau)
	&=
	(-1)^{ \beta + \frac{(\alpha-1)}{4} + \frac{\beta(\alpha-1)}{4}}
	\exp\left( -\tfrac{\pi i\beta}{4}  \right)
	\nu( B_{4} )^{ -3}
	\sqrt{\gamma\tau+\delta}
	f(\tau)
	.
\end{align*}
Furthermore if
\begin{align*}
	g(\tau)
	&=
		\frac{\eta(4\tau)\eta(\tau)}{\eta(2\tau)}f(\tau)
	,&
	h(\tau)
	&=
		\frac{\eta(4c^2\tau)\eta(c^2\tau)}{\eta(2c^2\tau)}f(\tau)
	,
\end{align*}
then $g(\tau)$ and $h(\tau)$ are weight $1$ weakly holomorphic modular forms on $\Gamma_0(4c^2)\cap\Gamma_1(4c)$.
\end{corollary}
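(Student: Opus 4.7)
The plan is to specialize Proposition \ref{PropositionGeneralTranformationForGetaQuotient} to the $f$ at hand, simplify the resulting transformation factor by peeling off the four congruence hypotheses in turn, and finally cancel the residual multiplier by multiplying through by the stated $\eta$-quotient. Direct application of Proposition \ref{PropositionGeneralTranformationForGetaQuotient} expresses $f(B\tau)$ as the product of a sign $(-1)^{(c\beta+\frac{\alpha-1}{4c}+\frac{\beta(\alpha-1)}{4})S+\beta T}$, an exponential $\exp(\pi i\beta S/4)$, the multiplier $\nu(B_{4c^2})^{r_0+3R}$, and $(\gamma\tau+\delta)^{(r_0+R)/2}f(\tau)$. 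The hypothesis $r_0+R=1$ immediately gives the weight $1/2$ factor $(\gamma\tau+\delta)^{1/2}$ and rewrites $r_0+3R=1+2R$; combining with $R\equiv -2\pmod{12}$ forces $1+2R\equiv -3\pmod{24}$, so $\nu(B_{4c^2})^{r_0+3R}=\nu(B_{4c^2})^{-3}$ since $\nu$ takes values in the $24$-th roots of unity.

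Next, $S\equiv -1\pmod 8$ collapses $\exp(\pi i\beta S/4)$ to $\exp(-\pi i\beta/4)$ and reduces $(-1)^{A\cdot S}$ to $(-1)^A$ for any integer $A$, while $T\equiv 0\pmod 2$ annihilates $(-1)^{\beta T}$. Using that $c$ is odd and $\alpha\equiv 1\pmod{4c}$, we have $\tfrac{\alpha-1}{4}=c\cdot\tfrac{\alpha-1}{4c}$, so $(-1)^{c\beta+\frac{\alpha-1}{4c}}=(-1)^{\beta+\frac{\alpha-1}{4}}$, matching the sign in the claimed formula. To finish the first part, I would convert $\nu(B_{4c^2})^{-3}$ into $\nu(B_4)^{-3}$ via the identity invoked in the proof of Proposition \ref{PropTransformationForSTilde}: for odd $c$ the quotient $\eta(2\tau)\eta(c^2\tau)\eta(4c^2\tau)/(\eta(\tau)\eta(4\tau)\eta(2c^2\tau))$ is a modular function on $\Gamma_0(4c^2)$ with trivial multiplier, and a short direct expansion using \eqref{EqEtaMultipler} on $\Gamma_0(4c^2)\cap\Gamma_1(4c)$ then reduces the cube-discrepancy between $\nu(B_{4c^2})$ and $\nu(B_4)$ to $1$.

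For the modular-form assertions, $\eta(4\tau)\eta(\tau)/\eta(2\tau)$ has weight $1/2$ and multiplier $\nu(B)\nu(B_4)/\nu(B_2)$, so $g$ has weight $1$ and overall multiplier $\nu(B)\nu(B_4)^{-2}\nu(B_2)^{-1}(-1)^{\beta+\frac{\alpha-1}{4}+\frac{\beta(\alpha-1)}{4}}\exp(-\pi i\beta/4)$; a routine expansion of \eqref{EqEtaMultipler} on $\Gamma_0(4c^2)\cap\Gamma_1(4c)$, patterned after the endgame of Proposition \ref{PropR2HarmonicMaassForm}, shows this reduces to $1$. For $h$ the argument is identical once $\nu(B)\nu(B_4)/\nu(B_2)$ is replaced by $\nu(B_{c^2})\nu(B_{4c^2})/\nu(B_{2c^2})$, which equals the former by the trivial-multiplier identity from the previous paragraph. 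Holomorphy on $\mathcal{H}$ and the cusp-growth requirement are inherited from $f$ together with the additional $\eta$-factors. The main obstacle throughout is bookkeeping the $24$-th roots of unity in the $\eta$-multiplier; the congruences must be peeled off in the above order so that each step leaves behind only a factor trivial on $\Gamma_0(4c^2)\cap\Gamma_1(4c)$.
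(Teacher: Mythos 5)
Your proposal is correct and follows essentially the same route as the paper: specialize Proposition \ref{PropositionGeneralTranformationForGetaQuotient}, use the congruences on $R$, $S$, $T$ to reduce the sign and exponential factors, identify $\nu(B_{4c^2})^{r_0+3R}=\nu(B_{4c^2})^{-3}=\nu(B_4)^{-3}$ via $1+2R\equiv-3\pmod{24}$, and cancel multipliers against the $\eta$-quotients for $g$ and $h$. The only cosmetic difference is in justification: the paper proves $\nu(B_{4c^2})^{3}=\nu(B_4)^{3}$ by noting $\eta(4c^2\tau)^3/\eta(4\tau)^3$ is a modular function on $\Gamma_0(4c^2)$ (rather than the six-$\eta$ identity you cite, which by itself does not give the cube relation), and it reads off the multiplier of $\eta(\tau)\eta(4\tau)/\eta(2\tau)$ from \eqref{EqBiagioliTransform} applied to $f_{4,1}$ instead of a direct expansion of \eqref{EqEtaMultipler}.
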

\begin{proof}
The only simplification for the transformation formula of $f(\tau)$
that does not follow immediately from
Proposition \ref{PropositionGeneralTranformationForGetaQuotient}
and the conditions on $R$, $S$, and $T$ is that
$\nu( B_{4c^2} )^{r_0+3R} = \nu( B_{4} )^{ -3}$.
However this follows from the fact that since
$c$ is odd, we have that $\frac{\eta(4c^2\tau)^3}{\eta(4\tau)^3}$
is a modular function on $\Gamma_0(4c^2)$
by Theorem 1.64 of \cite{Ono1}.
From this we have that $\nu( B_{4c^2} )^{3} = 	\nu( B_{4} )^{ 3}$.
But we also know $\nu( B_{4c^2} )^{3}$ is a $24^{th}$ root of unity and
$r_0+3R = 1+2R\equiv -3 \pmod{24}$, so that
$\nu( B_{4c^2} )^{r_0+3R} = \nu( B_{4c^2} )^{-3} = 	\nu( B_{4} )^{ -3}$.

Nothing that $\frac{\eta(2\tau)}{\eta(\tau)\eta(4\tau)}=f_{4,1}(\tau)$, we use
\eqref{EqBiagioliTransform} to deduce that, on $\Gamma_1(4)$,
\begin{align*}
\frac{\nu(B_2)}{\nu(B)\nu(B_4)}
&=
(-1)^{ \beta + \frac{(\alpha-1)}{4} + \frac{\beta(\alpha-1)}{4}}
\exp\left( \tfrac{\pi i\beta}{4}  \right)
\nu( B_{4} )^{ 3}.
\end{align*}
As such, $g(\tau)$ has trivial multiplier on $\Gamma_0(4c^2)\cap\Gamma_1(4c)$.
As noted in the proof of Proposition \ref{PropTransformationForSTilde},
on $\Gamma_0(4c^2)$ we have that
\begin{align*}
\frac{\nu(B_2)}{\nu(B)\nu(B_4)}
&=
\frac{\nu(B_{2c^2})}{\nu(B_{c^2})\nu(B_{4c^2})}
,
\end{align*}
and so $h(\tau)$ also has trivial multiplier.
\end{proof}

We could easily deduce an analogous result for even $c$, however we omit this
as we would not make use of it.

\section{Orders at Cusps}


We recall for a modular form $f$ on some congruence subgroup $\Gamma$, 
the invariant order at
$i\infty$ is the least power of $q$ appearing in the $q$-expansion at 
$i\infty$. That is, if
\begin{align*}
	f(\tau) &= \sum_{m=m_0}^\infty a(m)\exp(2\pi i \tau m/N )
,
\end{align*}
and $a(m_0)\not=0$, then the invariant order is $m_0/N$. For a modular form,
this is always a finite number.
For a harmonic weak Maass form, we cannot take such an expansion,
however we can do so for the holomorphic part. 
As such we define the invariant order at $i\infty$ of a harmonic weak Maass form to be the
least exponent of $q$ appearing in the holomorphic part.
If $f$ is a modular form of weight $k$,
$\gcd(\alpha,\gamma)=1$, and 
$B=\SmallMatrix{\alpha}{\beta}{\gamma}{\delta}\in\SLTwo$,
then the invariant order of $f$ at the cusp $\frac{\alpha}{\gamma}$ is the invariant
order at $i\infty$ of $j(B,\tau)^{-k}f(B\tau)$. In the same fashion, 
if $f$ is a harmonic weak Maass form, then we define the invariant order of $f$ 
at the cusp $\frac{\alpha}{\gamma}$ to be the invariant
order at $i\infty$ of $j(B,\tau)^{-k}f(B\tau)$. This value 
is independent of the choice of $B$.

To compute bounds on the orders of the functions handled in this article,
we use Corollary 6.2 from \cite{JenningsShaffer2}.
For a real number $w$, we let $\Floor{w}$ denote the greatest 
integer less than or equal to $w$ and $\Fractional{w}$ the fractional part of 
$w$. That is, 
$w=\Floor{w}+\Fractional{w}$, $\Floor{w}\in\mathbb{Z}$, and 
$0\le\Fractional{w}<1$.

\begin{proposition}\label{PropTMuOrders}
If $f(\tau)=q^{\alpha}\tmu(u_1\tau+u_2,w_1\tau+w_2;\tau)$ is a harmonic weak Maass form,
with $u_i,w_i\in\mathbb{R}$, then the lowest power of 
$q$ appearing in the expansion of the holomorphic part of
$f(\tau)$ is at least $\alpha+\tilde{\nu}(u_1,w_1)$, where
\begin{align*}
	\widetilde{\nu}(u,w)
	&=
		\frac{1}{2}\left( \Floor{u}-\Floor{w} \right)^2
		+
		\left(\Floor{u}-\Floor{w}\right)\left(\Fractional{u}-\Fractional{w}\right)		
		+
		k( u , w )
	,
	\\
	k(u, w)
	&=
		\left\{
		\begin{array}{cc}
			\vspace{5pt}
			\nu(\Fractional{u},\Fractional{w}) 
				& \mbox{ if } \Fractional{u}-\Fractional{w}\not=\pm\frac{1}{2} ,
			\\
			\min\left( \frac{1}{8},\nu(\Fractional{u},\Fractional{w}) \right)  
				&\mbox{ if } \Fractional{u}-\Fractional{w}=\pm\frac{1}{2} ,	
		\end{array}\right.
	\\
	\nu(u,w)
	&=
		\left\{
		\begin{array}{cc}
			\vspace{5pt}			
			\frac{u+w}{2}-\frac{1}{8} & \mbox{ if } u+w \le 1 ,
			\\
			\frac{7}{8}-\frac{u+w}{2} & \mbox{ if } u+w > 1.
		\end{array}\right.
\end{align*}
\end{proposition}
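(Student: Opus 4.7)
\smallskip

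\noindent\textbf{Proof proposal.} Since $\tmu = \mu + \frac{i}{2}R$ and the $R$-term contributes only to the non-holomorphic part, the lowest exponent of $q$ in the holomorphic part of $f(\tau)$ equals $\alpha$ plus the lowest exponent of $q$ in the $q$-expansion of $\mu(u_1\tau + u_2, w_1\tau + w_2; \tau)$. The plan is to reduce to the case where the ``lattice-direction'' coefficients $u_1, w_1$ lie in $[0,1)$ by invoking the elliptic transformation \eqref{EqTildeMuElliptic}, and then to cite Corollary 6.2 of \cite{JenningsShaffer2} to finish.

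First I would write $u_1 = \Floor{u_1}+\Fractional{u_1}$ and $w_1 = \Floor{w_1}+\Fractional{w_1}$. Setting $u_0 = \Fractional{u_1}\tau + u_2$, $v_0 = \Fractional{w_1}\tau + w_2$, $k=\Floor{u_1}$, $m=\Floor{w_1}$, and $\ell=n=0$ in \eqref{EqTildeMuElliptic}, I obtain
\begin{align*}
\tmu(u_1\tau+u_2, w_1\tau+w_2; \tau)
&= (-1)^{k+m} e^{\pi i\tau(k-m)^2 + 2\pi i(k-m)(u_0-v_0)} \,\tmu(u_0,v_0;\tau).
\end{align*}
The exponential factor contributes to the $q$-exponent exactly
\begin{align*}
\tfrac{1}{2}(\Floor{u_1}-\Floor{w_1})^2 + (\Floor{u_1}-\Floor{w_1})(\Fractional{u_1}-\Fractional{w_1}),
\end{align*}
which matches the first two summands in the definition of $\widetilde{\nu}(u_1,w_1)$. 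At this point it remains to bound the lowest exponent of $q$ in the holomorphic part of $\tmu(u_0, v_0; \tau)$, i.e., of $\mu(\Fractional{u_1}\tau + u_2, \Fractional{w_1}\tau + w_2; \tau)$, where now $\Fractional{u_1},\Fractional{w_1}\in[0,1)$.

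The second step would be to cite Corollary 6.2 of \cite{JenningsShaffer2}, which handles exactly this reduced situation and yields the lower bound $k(\Fractional{u_1},\Fractional{w_1})$ on the lowest exponent of $q$ in the holomorphic part. Adding the three contributions ($\alpha$, the exponential shift, and $k(\Fractional{u_1},\Fractional{w_1})$) gives $\alpha + \widetilde{\nu}(u_1,w_1)$ as claimed.

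The main technical point is not the reduction itself, which is bookkeeping with \eqref{EqTildeMuElliptic}, but rather the special case $\Fractional{u_1}-\Fractional{w_1} = \pm\tfrac{1}{2}$ hidden inside $k(u,w)$. This case must be handled with care because the series defining $\mu(u,v;\tau)$ has to be expanded either by splitting the $n=0$ pole of $1/(1-e^{2\pi i u})$ or by using the $\vartheta(v;\tau)$ in the denominator, and the natural lower bound $\nu(\Fractional{u_1},\Fractional{w_1})$ coming from the theta-quotient can be worse than the bound $\tfrac{1}{8}$ coming from the Appell series; one takes the minimum of the two. Since this special case is already absorbed into the statement and proof of Corollary 6.2 of \cite{JenningsShaffer2}, no further work is required here.
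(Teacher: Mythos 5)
Your proposal is consistent with what the paper actually does: the paper offers no proof of this proposition at all, but simply imports it as Corollary 6.2 of \cite{JenningsShaffer2} (``To compute bounds on the orders \dots we use Corollary 6.2 from \cite{JenningsShaffer2}''). Your reduction step is correct bookkeeping: applying \eqref{EqTildeMuElliptic} with $k=\Floor{u_1}$, $m=\Floor{w_1}$, $l=n=0$ does produce the factor $q^{\frac12(k-m)^2+(k-m)(\Fractional{u_1}-\Fractional{w_1})}$, matching the first two summands of $\widetilde{\nu}$, and the remaining bound is then exactly what the cited corollary supplies. Since the proposition as stated already carries the floor/fractional decomposition, it is most likely a verbatim restatement of that corollary, in which case your reduction is redundant but harmless; if the corollary only treats $u_1,w_1\in[0,1)$, your reduction is precisely the missing glue. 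Either way the logical structure is sound.

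One piece of your surrounding discussion is wrong, however, and worth correcting because it concerns exactly the delicate case. The claim that ``the $R$-term contributes only to the non-holomorphic part'' fails when $\left|\Fractional{u_1}-\Fractional{w_1}\right|\ge\frac12$: writing $z=u-v$ with $a=\mbox{Im}(z)/y=\Fractional{u_1}-\Fractional{w_1}$, the terms of $R(z;\tau)$ with $\mbox{sgn}(n)\ne\mbox{sgn}(n+a)$ contribute genuinely holomorphic summands $(\mbox{sgn}(n)-\mbox{sgn}(n+a))(-1)^{n-\frac12}e^{-\pi in^2\tau-2\pi inz}$. At $a=\pm\frac12$ the term $n=\mp\frac12$ gives a holomorphic contribution with $q$-exponent $-\frac18+\frac{|a|}{2}=\frac18$, and this is precisely the origin of the $\min\left(\frac18,\nu\right)$ in the definition of $k(u,w)$ --- not, as you suggest, a pole of the Appell series or the theta quotient. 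Since you ultimately defer the reduced case to the cited corollary, this misattribution does not break your argument, but it would mislead anyone trying to reconstruct the proof of the corollary itself, where the interaction between the holomorphic projection of $R$ and the regrouped geometric series in $\mu$ is the entire content.
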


In the proofs of Propositions \ref{PropR2HarmonicMaassForm} and 
\ref{PropositionSTildeHarmonicMaassForm} we computed 
$\widetilde{\mathcal{R}2}(a,c;B\tau)$ and 
$\widetilde{S}(k,c;B\tau)$, for $B\in\SLTwo$, in terms of
$\tmu$. With these formulas, along with Proposition \ref{PropTMuOrders},
we can immediately read off bounds on the orders at cusps.

\begin{proposition}\label{PropositionOrderForR2Tilde}
Suppose $a$ and $c$ are integers, $c>0$, and $c\nmid 2a$.
If $\alpha$ and $\gamma$ are integers with $\gcd(\alpha,\gamma)=1$,
then the invariant order of 
$\widetilde{\mathcal{R}2}(a,c;\tau)$ at the cusp $\frac{\alpha}{\gamma}$ is at least
\begin{align*}
	\frac{g^2}{4}
	\left(  
		-\frac{m^2}{32} 
		- \frac{2a^2 r^2}{c^2}  
		+
		\min\left(
			-\frac{amr}{2c} + \widetilde{\nu}\left( \tfrac{2ar}{c}, -\tfrac{m}{4}  \right),
			\frac{amr}{2c} + \widetilde{\nu}\left( \tfrac{2ar}{c}, \tfrac{m}{4}  \right)
		\right)
	\right)
,
\end{align*}
where $g=\gcd(4,\gamma)$, $m=4\alpha/g$, and $r=\gamma/g$.
\end{proposition}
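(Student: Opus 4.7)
The plan is to read the claimed lower bound directly off the explicit cusp computation already carried out inside the proof of Proposition \ref{PropR2HarmonicMaassForm}, combined with a single application of Proposition \ref{PropTMuOrders} to each of the two Zwegers $\tmu$-terms that arise. No new analytic input is required, as harmonicity and the linear exponential growth at the cusps have already been verified.

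Concretely, with $B=\SmallMatrix{\alpha}{\beta}{\gamma}{\delta}\in\SLTwo$, $g=\gcd(4,\gamma)$, $m=4\alpha/g$, $r=\gamma/g$, and $C$ the upper-triangular matrix constructed inside the proof of Proposition \ref{PropR2HarmonicMaassForm} (with diagonal entries $g$ and $4/g$), that proof produced an identity of the form
\begin{align*}
(\gamma\tau+\delta)^{-\frac{1}{2}}\widetilde{\mathcal{R}2}(a,c;B\tau)
&=
\epsilon_1\, e^{2\pi i C\tau\left(-\frac{m^2}{32}-\frac{2a^2r^2}{c^2}-\frac{amr}{2c}\right)}
\tmu\!\left(\tfrac{2ar}{c}C\tau+\tfrac{2as}{c},-\tfrac{m}{4}C\tau-\tfrac{s}{4};C\tau\right) \\
&\quad + \epsilon_2\, e^{2\pi i C\tau\left(-\frac{m^2}{32}-\frac{2a^2r^2}{c^2}+\frac{amr}{2c}\right)}
\tmu\!\left(\tfrac{2ar}{c}C\tau+\tfrac{2as}{c},\tfrac{m}{4}C\tau+\tfrac{s}{4};C\tau\right),
\end{align*}
for nonzero constants $\epsilon_1,\epsilon_2$, together with the key identity $e^{2\pi iC\tau}=\epsilon_3\, q^{g^2/4}$ for a fixed root of unity $\epsilon_3$. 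The first step is to substitute $\tau''=C\tau$, so that powers of $q''=e^{2\pi i\tau''}$ translate to powers of $q^{g^2/4}$ up to roots of unity.

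With $\tau''$ in place, each of the two summands has the precise shape $q''^{\,\alpha'}\,\tmu(u_1\tau''+u_2,w_1\tau''+w_2;\tau'')$ demanded by Proposition \ref{PropTMuOrders}. For the first summand $(u_1,w_1)=(\tfrac{2ar}{c},-\tfrac{m}{4})$ and $\alpha'=-\tfrac{m^2}{32}-\tfrac{2a^2r^2}{c^2}-\tfrac{amr}{2c}$, so the minimal $q''$-exponent in its holomorphic part is at least $\alpha'+\widetilde{\nu}(\tfrac{2ar}{c},-\tfrac{m}{4})$; the second summand yields the analogous expression with the opposite sign on $\tfrac{amr}{2c}$ and with $w_1=\tfrac{m}{4}$. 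Multiplying the resulting $q''$-exponents by $g^2/4$ to convert back to $q$-exponents, and using that the holomorphic part of a sum has invariant order at least the minimum of the invariant orders of its summands, delivers exactly the stated bound.

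The only real bookkeeping is that the roots of unity $\epsilon_1,\epsilon_2,\epsilon_3$ contribute nothing to the order and that the rescaling $q=q''^{\,4/g^2}$ (equivalently, multiplication of $q''$-exponents by $g^2/4$) is applied consistently to both the explicit exponential prefactors and to the $\widetilde{\nu}$-contribution produced by Proposition \ref{PropTMuOrders}. There is no genuine obstacle; the proposition is essentially a clean repackaging of the cusp data already encoded in the earlier transformation formula.
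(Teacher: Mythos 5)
Your proposal is correct and follows essentially the same route as the paper: the paper's Section 5 explicitly states that the bound is read off by combining the $\tmu$-expression for $(\gamma\tau+\delta)^{-1/2}\widetilde{\mathcal{R}2}(a,c;B\tau)$ derived in the proof of Proposition \ref{PropR2HarmonicMaassForm} with Proposition \ref{PropTMuOrders}, which is precisely what you do. The bookkeeping with $e^{2\pi iC\tau}=\epsilon_3 q^{g^2/4}$, the two parameter pairs $(\tfrac{2ar}{c},\mp\tfrac{m}{4})$, and taking the minimum over the two summands all match the intended argument.
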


\begin{proposition}\label{PropositionOrderForSTilde}
Suppose $k$ and $c$ are integers, $c>0$, and $1+4k-c\not\equiv0\pmod{4c}$.
If $\alpha$ and $\gamma$ are integers with $\gcd(\alpha,\gamma)=1$,
then the invariant order of 
$\widetilde{\mathcal{S}}(k,c;\tau)$ at the cusp $\frac{\alpha}{\gamma}$ is at least
\begin{align*}
\frac{g^2}{4c^2}\left(
	-\frac{m^2(1+4k-2c)^2}{32c^2}
	+\frac{mr(1+4k-2c)(c-1)}{8c}	
	-\frac{r^2(c-1)^2}{8}
	+\widetilde{\nu}\left( \frac{(1+4k-c)m}{4c}, \frac{m}{4}+\frac{(c-1)r}{2} \right)	
\right)
,
\end{align*}
where $g=\gcd(4c^2,\gamma)$, $m=4c^2\alpha/g$, and $r=\gamma/g$.
\end{proposition}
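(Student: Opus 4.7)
The plan is to mimic the strategy used to prove Proposition \ref{PropositionOrderForR2Tilde}: combine the explicit formula for $(\gamma\tau+\delta)^{-1/2}\widetilde{\mathcal{S}}(k,c;B\tau)$ already worked out inside the proof of Proposition \ref{PropositionSTildeHarmonicMaassForm} with the $\tmu$-exponent estimate of Proposition \ref{PropTMuOrders}. To begin, extend the coprime pair $(\alpha,\gamma)$ to a matrix $B=\SmallMatrix{\alpha}{\beta}{\gamma}{\delta}\in\SLTwo$ and recall that in the proof of Proposition \ref{PropositionSTildeHarmonicMaassForm} the factorization $4c^2 B = L C$ (with $g=\gcd(4c^2,\gamma)$, $m=4c^2\alpha/g$, $r=\gamma/g$, $L=\SmallMatrix{m}{n}{r}{s}\in\SLTwo$, and $C=\SmallMatrix{g}{4c^2\beta s-\delta n}{0}{4c^2/g}$) was used together with \eqref{EqTildeMuModular} to produce the identity
\begin{align*}
(\gamma\tau+\delta)^{-1/2}\widetilde{\mathcal{S}}(k,c;B\tau) &= \epsilon_1 \, \exp\!\bigl(\pi i \, C\tau \cdot X\bigr) \, \tmu\!\left( u_1 C\tau + u_2,\, w_1 C\tau + w_2;\, C\tau \right),
\end{align*}
where $\epsilon_1$ is a nonzero constant and $e^{2\pi i C\tau} = \epsilon_2 \, q^{g^2/(4c^2)}$ for some fixed root of unity $\epsilon_2$. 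Reading the exponents off of that computation,
\begin{align*}
X &= -\tfrac{m^2(1+4k-2c)^2}{16c^2} + \tfrac{mr(1+4k-2c)(c-1)}{4c} - \tfrac{r^2(c-1)^2}{4}, \\
u_1 &= \tfrac{(1+4k-c)m}{4c}, \qquad w_1 = \tfrac{m}{4} + \tfrac{(c-1)r}{2}.
\end{align*}

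The next step is to rewrite the right-hand side in terms of $q':=e^{2\pi i C\tau}$ as a root of unity times $(q')^{X/2}\,\tmu(u_1 C\tau + u_2, w_1 C\tau + w_2; C\tau)$, and then apply Proposition \ref{PropTMuOrders} with $\tau$ replaced by $C\tau$ (so the relevant series variable is $q'$ rather than $q$). This gives that the least exponent of $q'$ appearing in the holomorphic part is at least $\tfrac{X}{2} + \widetilde{\nu}(u_1,w_1)$. Finally, converting through $q' = \epsilon_2 \, q^{g^2/(4c^2)}$ multiplies every exponent by the scaling factor $g^2/(4c^2)$, which turns the bound into the expression stated in the proposition. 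Since by definition the invariant order of $\widetilde{\mathcal{S}}(k,c;\tau)$ at $\alpha/\gamma$ is the least exponent of $q$ in the holomorphic part of $(\gamma\tau+\delta)^{-1/2}\widetilde{\mathcal{S}}(k,c;B\tau)$, the desired inequality follows.

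The only nontrivial point, and thus the main obstacle, is justifying the application of Proposition \ref{PropTMuOrders} with the variable change $\tau \mapsto C\tau$ and the subsequent passage from $q'$-exponents to $q$-exponents. Because $C$ is upper triangular with positive diagonal entries $g$ and $4c^2/g$, the map $\tau\mapsto C\tau$ sends $\Im(\tau)\to\infty$ to $\Im(C\tau)\to\infty$, so the holomorphic part of the $\tmu$-expression expands as a power series in $q'$ near $q'=0$ in precisely the form required by Proposition \ref{PropTMuOrders}, and the monomial substitution $q' \mapsto \epsilon_2 \, q^{g^2/(4c^2)}$ transfers the exponent inequality faithfully. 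With this observed, all remaining work is the routine bookkeeping of constants already carried out in the proof of Proposition \ref{PropositionSTildeHarmonicMaassForm}, and the bound follows.
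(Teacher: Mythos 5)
Your proposal is correct and is precisely the argument the paper intends: it combines the $\tmu$-expression for $(\gamma\tau+\delta)^{-1/2}\widetilde{\mathcal{S}}(k,c;B\tau)$ derived in the proof of Proposition \ref{PropositionSTildeHarmonicMaassForm} with Proposition \ref{PropTMuOrders}, and your constants $X$, $u_1$, $w_1$ and the rescaling by $g^2/(4c^2)$ all check out. The paper leaves this as an immediate read-off, so your write-up simply makes that sketch explicit.
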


The following is Lemma 3.2 of \cite{Biagioli}.
\begin{proposition}\label{PropProductOrders}
Suppose $\alpha$ and $\gamma$ are integers with $\gcd(\alpha,\gamma)=1$,
then the invariant order of $f_{N,\rho}(\tau)$ at the cusp 
$\frac{\alpha}{\gamma}$ is
$
	\frac{\gcd(N,\gamma)^2}{2N}
	\left(
		\Fractional{\frac{\alpha\rho}{\gcd(N,\gamma)} } 
		-
		\frac{1}{2}
	\right)^2
$.
\end{proposition}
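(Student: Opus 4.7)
The plan is to follow the strategy of Lemma 3.2 in Biagioli's paper by explicitly computing $(\gamma\tau+\delta)^{-1/2}f_{N,\rho}(B\tau)$ for a representative $B=\SmallMatrix{\alpha}{\beta}{\gamma}{\delta}\in\SLTwo$ of the cusp $\alpha/\gamma$ and extracting the smallest exponent of $q$ from its expansion at $i\infty$. By the symmetries $f_{N,\rho}=f_{N,-\rho}=f_{N,\rho+N}$ noted in the text, I may restrict to the range $0<\rho<N$.

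The first step is to express $f_{N,\rho}$ as a theta series. Substituting $z=\rho\tau$ and $\tau\mapsto N\tau$ into the product form of $\vartheta(z;\tau)$ recalled in Section 3, a direct manipulation of powers of $q$ yields
\begin{align*}
f_{N,\rho}(\tau)
=ie^{\pi i\rho^2\tau/N}\vartheta(\rho\tau;N\tau)
=i\sum_{m\in\mathbb{Z}}(-1)^m e^{\pi i\tau(2Nm+N+2\rho)^2/(4N)}.
\end{align*}

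The second step is to analyze $f_{N,\rho}(B\tau)$. Let $d=\gcd(N,\gamma)$ and write $N=dN_1$, $\gamma=dg_1$ with $\gcd(N_1,g_1)=1$. Using $B\tau=\alpha/\gamma-1/(\gamma(\gamma\tau+\delta))$ and splitting the sum over $m$ into residue classes modulo $g_1$, a Poisson (Gaussian) summation converts each subsum into a theta-like series in the local uniformizer at the cusp $\alpha/\gamma$, multiplied by character sums and phase factors that depend only on $\alpha,\beta,\gamma,\delta,\rho,N$. Because the original summand has all distinct exponents and nonvanishing coefficients, the same holds of the transformed sum, so no cancellations occur at the leading order.

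The third step is to identify that leading exponent as the minimum of a shifted quadratic. After the transformation, the exponents take the form $\frac{d^2}{2N}(n-\alpha\rho/d)^2$ with $n$ running through $\mathbb{Z}+\frac{1}{2}$, and the minimum distance from $\alpha\rho/d$ to the half-integer lattice is $|\Fractional{\alpha\rho/d}-\frac{1}{2}|$, giving the claimed invariant order $\frac{d^2}{2N}(\Fractional{\alpha\rho/d}-\frac{1}{2})^2$. The principal obstacle is bookkeeping the multiplier system and character sums arising from the Poisson transformation; Biagioli performs this calculation in detail, and my approach would follow his derivation, with the added verification that the leading coefficient is nonzero so that the bound is actually attained.
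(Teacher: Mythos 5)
The paper does not actually prove this proposition; it is quoted verbatim as Lemma 3.2 of Biagioli, so there is no in-paper argument to compare against. Your outline --- rewriting $f_{N,\rho}$ as the unary theta series $\pm\, e^{\pi i\rho^2\tau/N}\vartheta(\rho\tau;N\tau)$ (up to a constant; your second displayed equality drops a factor of $i$, which is harmless), transporting it to the cusp $\alpha/\gamma$ via the theta transformation law and the upper-triangular matrix decomposition, and then minimizing the resulting quadratic exponent $\frac{d^2}{2N}(n-\alpha\rho/d)^2$ over $n\in\mathbb{Z}+\frac12$ --- is precisely the argument behind Biagioli's lemma and is sound. The one substantive point that must genuinely be checked (you flag it yourself) is that the minimal exponent survives with nonzero coefficient: when the shifted characteristic is a half-integer multiple, two lattice points share the minimal exponent and their root-of-unity coefficients could in principle cancel (as they do for $\vartheta(0;\tau)\equiv 0$); for these particular characteristics they reinforce rather than cancel, so the formula gives the exact order and not merely a lower bound.
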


\section{Proof Of Theorem \ref{TheoremDissection} }

To begin we give the definitions of the remaining 
$R2_i(q)$ from the statement of Theorem 2.2. We let
\begin{align*}
	&R2_1(q)
	=
	\tfrac{A(-8,1,-3)q^{-3}J_{0}J_{7}J_{8}J_{10}^{2}}
		{J_{1}^{2}J_{2}^{2}J_{3}^{2}J_{4}^{2}J_{5}^{2}J_{11}^{2}J_{12}^{2}J_{13}^{2}J_{14}^{2}}
	+\tfrac{A(10,-7,2)q^{-3}J_{0}J_{7}^{2}J_{8}}
		{J_{1}^{2}J_{2}^{2}J_{3}^{2}J_{4}^{2}J_{5}^{2}J_{9}J_{11}^{2}J_{13}^{2}J_{14}^{2}}
	+\tfrac{A(0,-1,0)q^{-3}J_{0}J_{7}^{2}J_{8}^{2}}
		{J_{1}^{2}J_{2}^{2}J_{3}^{2}J_{4}^{2}J_{5}^{2}J_{9}^{2}J_{10}J_{11}J_{12}J_{13}^{2}J_{14}}
	\\&
	+\tfrac{A(-5,11,0)q^{-3}J_{0}J_{7}^{2}J_{8}^{2}}
		{J_{1}^{2}J_{2}^{2}J_{3}^{2}J_{4}^{2}J_{5}^{2}J_{9}J_{10}^{2}J_{11}^{2}J_{12}J_{13}^{2}}
	+\tfrac{A(4,10,3)q^{-3}J_{0}J_{6}J_{8}J_{9}^{2}J_{10}}
		{J_{1}^{2}J_{2}^{2}J_{3}^{2}J_{4}^{2}J_{5}^{2}J_{7}J_{11}^{2}J_{12}^{2}J_{13}^{2}J_{14}^{2}}
	+\tfrac{A(0,-1,0)q^{-3}J_{0}J_{6}J_{8}J_{10}}
		{J_{1}^{2}J_{2}^{2}J_{3}^{2}J_{4}^{2}J_{5}^{2}J_{11}^{2}J_{12}^{2}J_{13}J_{14}^{2}}
	+\tfrac{A(1,2,1)q^{-3}J_{0}J_{6}J_{8}J_{9}}
		{J_{1}^{2}J_{2}^{2}J_{3}^{2}J_{4}^{2}J_{5}^{2}J_{10}J_{11}^{2}J_{13}^{2}J_{14}^{2}}
	\\&	
	+\tfrac{A(-1,1,0)q^{-3}J_{0}J_{6}J_{8}^{2}}
		{J_{1}^{2}J_{2}^{2}J_{3}^{2}J_{4}^{2}J_{5}^{2}J_{9}J_{10}J_{12}J_{13}^{2}J_{14}^{2}}
	+\tfrac{A(8,-15,1)q^{-3}J_{0}J_{6}J_{8}^{2}}
		{J_{1}^{2}J_{2}^{2}J_{3}^{2}J_{4}^{2}J_{5}^{2}J_{10}^{2}J_{11}J_{12}J_{13}^{2}J_{14}}
	+\tfrac{A(-2,-17,-1)q^{-3}J_{0}J_{6}J_{7}J_{10}}
		{J_{1}^{2}J_{2}^{2}J_{3}^{2}J_{4}^{2}J_{5}^{2}J_{9}J_{11}J_{12}J_{13}^{2}J_{14}^{2}}
	+\tfrac{A(-5,19,1)q^{-3}J_{0}J_{6}J_{7}}
		{J_{1}^{2}J_{2}^{2}J_{3}^{2}J_{4}^{2}J_{5}^{2}J_{11}^{2}J_{12}J_{13}^{2}J_{14}}
	\\&	
	+\tfrac{A(4,-11,0)q^{-3}J_{0}J_{6}J_{7}J_{8}}
		{J_{1}^{2}J_{2}^{2}J_{3}^{2}J_{4}^{2}J_{5}^{2}J_{9}^{2}J_{12}^{2}J_{13}^{2}J_{14}}
	+\tfrac{A(1,9,2)q^{-3}J_{0}J_{6}J_{7}J_{8}J_{10}}
		{J_{1}^{2}J_{2}^{2}J_{3}^{2}J_{4}^{2}J_{5}^{2}J_{9}^{2}J_{11}^{2}J_{12}^{2}J_{14}^{2}}
	+\tfrac{A(-11,8,-4)q^{-3}J_{0}J_{6}J_{7}J_{8}}
		{J_{1}^{2}J_{2}^{2}J_{3}^{2}J_{4}^{2}J_{5}^{2}J_{9}J_{10}J_{11}^{2}J_{13}J_{14}^{2}}
	+\tfrac{A(-3,-2,-2)q^{-3}J_{0}J_{6}J_{7}J_{8}}
		{J_{1}^{2}J_{2}^{2}J_{3}^{2}J_{4}^{2}J_{5}^{2}J_{9}J_{10}J_{11}J_{12}^{2}J_{13}^{2}}
	\\&	
	+\tfrac{A(5,-11,0)q^{-3}J_{0}J_{6}J_{7}J_{8}J_{14}}
		{J_{1}^{2}J_{2}^{2}J_{3}^{2}J_{4}^{2}J_{5}^{2}J_{10}^{2}J_{11}^{2}J_{12}^{2}J_{13}^{2}}
	+\tfrac{A(4,2,3)q^{-3}J_{0}J_{6}J_{7}J_{8}^{2}}
		{J_{1}^{2}J_{2}^{2}J_{3}^{2}J_{4}^{2}J_{5}^{2}J_{9}^{2}J_{10}^{2}J_{11}J_{12}J_{13}J_{14}}
	+\tfrac{A(3,-11,0)q^{-2}J_{0}J_{6}J_{7}J_{8}^{2}}
		{J_{1}^{2}J_{2}^{2}J_{3}^{2}J_{4}^{2}J_{5}^{2}J_{10}J_{11}^{2}J_{12}J_{13}^{2}J_{14}^{2}}
	\\&	
	+\tfrac{A(3,3,1)q^{-3}J_{0}J_{6}J_{7}^{2}J_{10}^{2}}
		{J_{1}^{2}J_{2}^{2}J_{3}^{2}J_{4}^{2}J_{5}^{2}J_{8}J_{9}^{2}J_{11}J_{12}^{2}J_{13}^{2}J_{14}}
	+\tfrac{A(-3,-3,-1)q^{-3}J_{0}J_{6}J_{7}^{2}J_{10}}
		{J_{1}^{2}J_{2}^{2}J_{3}^{2}J_{4}^{2}J_{5}^{2}J_{8}J_{9}J_{11}^{2}J_{12}^{2}J_{13}^{2}}
	+\tfrac{A(3,2,1)q^{-3}J_{0}J_{6}J_{7}^{2}}
		{J_{1}^{2}J_{2}^{2}J_{3}^{2}J_{4}^{2}J_{5}^{2}J_{9}^{2}J_{11}^{2}J_{12}J_{13}J_{14}}
	+\tfrac{A(-3,-2,-2)q^{-3}J_{0}J_{6}J_{7}^{2}J_{12}}
		{J_{1}^{2}J_{2}^{2}J_{3}^{2}J_{4}^{2}J_{5}^{2}J_{9}J_{10}^{2}J_{11}^{2}J_{13}^{2}J_{14}}
	\\&	
	+\tfrac{A(-3,11,0)q^{-2}J_{0}J_{6}J_{7}^{2}J_{8}}
		{J_{1}^{2}J_{2}^{2}J_{3}^{2}J_{4}^{2}J_{5}^{2}J_{9}J_{11}^{2}J_{12}^{2}J_{13}^{2}J_{14}}
	+\tfrac{A(-1,0,-1)q^{-2}J_{0}J_{6}J_{7}^{2}J_{8}^{2}}
		{J_{1}^{2}J_{2}^{2}J_{3}^{2}J_{4}^{2}J_{5}^{2}J_{9}^{2}J_{10}J_{11}^{2}J_{12}J_{13}J_{14}^{2}}
	+\tfrac{A(1,-18,-3)q^{-3}J_{0}J_{6}^{2}J_{9}}
		{J_{1}^{2}J_{2}^{2}J_{3}^{2}J_{4}^{2}J_{5}^{2}J_{7}J_{11}J_{12}J_{13}^{2}J_{14}^{2}}
	\\&	
	+\tfrac{A(-5,11,0)q^{-3}J_{0}J_{6}^{2}J_{9}^{2}}
		{J_{1}^{2}J_{2}^{2}J_{3}^{2}J_{4}^{2}J_{5}^{2}J_{7}J_{10}J_{11}^{2}J_{12}J_{13}^{2}J_{14}}
	+\tfrac{A(-4,16,1)q^{-3}J_{0}J_{6}^{2}J_{8}J_{11}}
		{J_{1}^{2}J_{2}^{2}J_{3}^{2}J_{4}^{2}J_{5}^{2}J_{7}J_{9}J_{12}^{2}J_{13}^{2}J_{14}^{2}}
	+\tfrac{A(-1,5,0)q^{-3}J_{0}J_{6}^{2}J_{8}}
		{J_{1}^{2}J_{2}^{2}J_{3}^{2}J_{4}^{2}J_{5}^{2}J_{7}J_{10}J_{12}^{2}J_{13}^{2}J_{14}}
	+\tfrac{A(4,-16,-1)q^{-3}J_{0}J_{6}^{2}J_{8}}
		{J_{1}^{2}J_{2}^{2}J_{3}^{2}J_{4}^{2}J_{5}^{2}J_{7}J_{11}^{2}J_{12}^{2}J_{14}^{2}}
	\\&	
	+\tfrac{A(-2,-1,-1)q^{-3}J_{0}J_{6}^{2}J_{8}J_{9}}
		{J_{1}^{2}J_{2}^{2}J_{3}^{2}J_{4}^{2}J_{5}^{2}J_{7}J_{10}^{2}J_{11}^{2}J_{13}J_{14}^{2}}
	+\tfrac{A(3,3,1)q^{-3}J_{0}J_{6}^{2}J_{9}J_{10}^{2}}
		{J_{1}^{2}J_{2}^{2}J_{3}^{2}J_{4}^{2}J_{5}^{2}J_{8}^{2}J_{11}^{2}J_{12}J_{13}^{2}J_{14}^{2}}
	+\tfrac{A(0,-1,1)q^{-3}J_{0}J_{6}^{2}J_{10}^{2}}
		{J_{1}^{2}J_{2}^{2}J_{3}^{2}J_{4}^{2}J_{5}^{2}J_{8}J_{9}J_{12}^{2}J_{13}^{2}J_{14}^{2}}
	+\tfrac{A(-2,-4,-1)q^{-3}J_{0}J_{6}^{2}J_{10}}
		{J_{1}^{2}J_{2}^{2}J_{3}^{2}J_{4}^{2}J_{5}^{2}J_{8}J_{11}J_{12}^{2}J_{13}^{2}J_{14}}
	\\&	
	+\tfrac{A(3,2,2)q^{-3}J_{0}J_{6}^{2}J_{9}}
		{J_{1}^{2}J_{2}^{2}J_{3}^{2}J_{4}^{2}J_{5}^{2}J_{8}J_{11}^{2}J_{12}^{2}J_{13}^{2}}
	+\tfrac{A(-1,3,-1)q^{-3}J_{0}J_{6}^{2}}
		{J_{1}^{2}J_{2}^{2}J_{3}^{2}J_{4}^{2}J_{5}^{2}J_{10}J_{11}^{2}J_{12}J_{13}J_{14}}
	+\tfrac{A(3,3,1)q^{-3}J_{0}J_{6}^{2}J_{7}J_{10}}
		{J_{1}^{2}J_{2}^{2}J_{3}^{2}J_{4}^{2}J_{5}^{2}J_{8}J_{9}^{2}J_{11}J_{12}^{2}J_{13}J_{14}}
	+\tfrac{A(1,0,1)q^{-2}J_{0}J_{6}^{2}J_{7}J_{8}^{2}}
		{J_{1}^{2}J_{2}^{2}J_{3}^{2}J_{4}^{2}J_{5}^{2}J_{9}^{2}J_{10}^{2}J_{11}^{2}J_{12}J_{14}^{2}}
	\\&	
	+\tfrac{A(-1,0,-1)q^{-3}J_{0}J_{8}^{2}}
		{J_{1}^{2}J_{2}^{2}J_{3}^{2}J_{4}^{2}J_{5}J_{7}J_{11}J_{12}J_{13}^{2}J_{14}^{2}}
	+\tfrac{A(0,-1,0)q^{-3}J_{0}J_{8}^{2}J_{9}}
		{J_{1}^{2}J_{2}^{2}J_{3}^{2}J_{4}^{2}J_{5}J_{7}J_{10}J_{11}^{2}J_{12}J_{13}^{2}J_{14}}
	+\tfrac{A(-1,-1,-1)J_{0}J_{6}J_{7}J_{8}^{2}}
		{J_{1}^{2}J_{2}^{2}J_{3}^{2}J_{4}^{2}J_{9}^{2}J_{10}J_{11}^{2}J_{12}J_{13}^{2}J_{14}^{2}}
	\\&	
	+\tfrac{A(2,1,1)qJ_{0}J_{5}J_{6}^{2}J_{8}^{2}}
		{J_{1}^{2}J_{2}^{2}J_{3}^{2}J_{4}^{2}J_{7}J_{9}J_{10}^{2}J_{11}^{2}J_{12}J_{13}^{2}J_{14}^{2}}
	,\\
	&R2_2(q)
	=
	\tfrac{A(5,-1,5)q^{-3}J_{0}J_{7}J_{8}^{2}}
		{J_{1}^{2}J_{2}^{2}J_{3}^{2}J_{4}^{2}J_{5}^{2}J_{11}^{2}J_{12}J_{13}^{2}J_{14}^{2}}
	+\tfrac{A(0,2,2)q^{-3}J_{0}J_{7}^{2}J_{8}J_{10}^{2}}
		{J_{1}^{2}J_{2}^{2}J_{3}^{2}J_{4}^{2}J_{5}^{2}J_{9}^{2}J_{11}J_{12}^{2}J_{13}^{2}J_{14}^{2}}
	+\tfrac{A(-12,-6,-16)q^{-3}J_{0}J_{7}^{2}J_{8}J_{10}}
		{J_{1}^{2}J_{2}^{2}J_{3}^{2}J_{4}^{2}J_{5}^{2}J_{9}J_{11}^{2}J_{12}^{2}J_{13}^{2}J_{14}}
	\\&	
	+\tfrac{A(1,-4,1)q^{-3}J_{0}J_{7}^{2}J_{8}^{2}}
		{J_{1}^{2}J_{2}^{2}J_{3}^{2}J_{4}^{2}J_{5}^{2}J_{9}^{2}J_{11}^{2}J_{12}J_{13}J_{14}^{2}}
	+\tfrac{A(13,8,14)q^{-3}J_{0}J_{7}^{2}J_{8}^{2}J_{12}}
		{J_{1}^{2}J_{2}^{2}J_{3}^{2}J_{4}^{2}J_{5}^{2}J_{9}J_{10}^{2}J_{11}^{2}J_{13}^{2}J_{14}^{2}}
	+\tfrac{A(3,0,6)q^{-3}J_{0}J_{6}J_{8}^{2}J_{9}^{2}}
		{J_{1}^{2}J_{2}^{2}J_{3}^{2}J_{4}^{2}J_{5}^{2}J_{7}J_{10}J_{11}^{2}J_{12}J_{13}^{2}J_{14}^{2}}
	+\tfrac{A(2,4,1)q^{-3}J_{0}J_{6}J_{8}J_{10}}
		{J_{1}^{2}J_{2}^{2}J_{3}^{2}J_{4}^{2}J_{5}^{2}J_{11}J_{12}^{2}J_{13}^{2}J_{14}^{2}}
	\\&	
	+\tfrac{A(3,5,4)q^{-3}J_{0}J_{6}J_{8}J_{9}}
		{J_{1}^{2}J_{2}^{2}J_{3}^{2}J_{4}^{2}J_{5}^{2}J_{11}^{2}J_{12}^{2}J_{13}^{2}J_{14}}
	+\tfrac{A(-7,-2,-11)q^{-3}J_{0}J_{6}J_{8}^{2}}
		{J_{1}^{2}J_{2}^{2}J_{3}^{2}J_{4}^{2}J_{5}^{2}J_{10}J_{11}^{2}J_{12}J_{13}J_{14}^{2}}
	+\tfrac{A(4,-5,9)q^{-3}J_{0}J_{6}J_{7}J_{8}J_{10}}
		{J_{1}^{2}J_{2}^{2}J_{3}^{2}J_{4}^{2}J_{5}^{2}J_{9}^{2}J_{11}J_{12}^{2}J_{13}J_{14}^{2}}
	+\tfrac{A(-11,1,-13)q^{-3}J_{0}J_{6}J_{7}J_{8}}
		{J_{1}^{2}J_{2}^{2}J_{3}^{2}J_{4}^{2}J_{5}^{2}J_{9}J_{10}J_{11}J_{13}^{2}J_{14}^{2}}
	\\&	
	+\tfrac{A(-9,-5,-9)q^{-3}J_{0}J_{6}J_{7}J_{8}}
		{J_{1}^{2}J_{2}^{2}J_{3}^{2}J_{4}^{2}J_{5}^{2}J_{9}J_{11}^{2}J_{12}^{2}J_{13}J_{14}}
	+\tfrac{A(0,-1,1)q^{-3}J_{0}J_{6}J_{7}J_{8}}
		{J_{1}^{2}J_{2}^{2}J_{3}^{2}J_{4}^{2}J_{5}^{2}J_{10}^{2}J_{11}^{2}J_{13}^{2}J_{14}}
	+\tfrac{A(2,3,1)q^{-3}J_{0}J_{6}J_{7}J_{8}^{2}}
		{J_{1}^{2}J_{2}^{2}J_{3}^{2}J_{4}^{2}J_{5}^{2}J_{9}^{2}J_{10}^{2}J_{12}J_{13}^{2}J_{14}}
	+\tfrac{A(-6,-4,-5)q^{-3}J_{0}J_{6}J_{7}J_{8}^{2}}
		{J_{1}^{2}J_{2}^{2}J_{3}^{2}J_{4}^{2}J_{5}^{2}J_{9}^{2}J_{10}J_{11}^{2}J_{12}J_{14}^{2}}
	\\&	
	+\tfrac{A(-3,-2,-2)q^{-3}J_{0}J_{6}J_{7}^{2}J_{10}}
		{J_{1}^{2}J_{2}^{2}J_{3}^{2}J_{4}^{2}J_{5}^{2}J_{8}J_{9}J_{11}^{2}J_{13}^{2}J_{14}^{2}}
	+\tfrac{A(10,9,7)q^{-3}J_{0}J_{6}J_{7}^{2}}
		{J_{1}^{2}J_{2}^{2}J_{3}^{2}J_{4}^{2}J_{5}^{2}J_{9}^{2}J_{11}J_{12}J_{13}^{2}J_{14}}
	+\tfrac{A(3,1,5)q^{-3}J_{0}J_{6}J_{7}^{2}}
		{J_{1}^{2}J_{2}^{2}J_{3}^{2}J_{4}^{2}J_{5}^{2}J_{9}J_{10}J_{11}^{2}J_{12}J_{13}^{2}}
	+\tfrac{A(-11,-6,-14)q^{-2}J_{0}J_{6}J_{7}^{2}J_{8}^{2}}
		{J_{1}^{2}J_{2}^{2}J_{3}^{2}J_{4}^{2}J_{5}^{2}J_{9}^{2}J_{10}J_{11}J_{12}J_{13}^{2}J_{14}^{2}}
	\\&	
	+\tfrac{A(13,8,14)q^{-2}J_{0}J_{6}J_{7}^{2}J_{8}^{2}}
		{J_{1}^{2}J_{2}^{2}J_{3}^{2}J_{4}^{2}J_{5}^{2}J_{9}J_{10}^{2}J_{11}^{2}J_{12}J_{13}^{2}J_{14}}
	+\tfrac{A(5,7,6)q^{-3}J_{0}J_{6}^{2}J_{8}}
		{J_{1}^{2}J_{2}^{2}J_{3}^{2}J_{4}^{2}J_{5}^{2}J_{7}J_{11}J_{12}^{2}J_{13}J_{14}^{2}}
	+\tfrac{A(3,2,4)q^{-3}J_{0}J_{6}^{2}J_{8}J_{9}}
		{J_{1}^{2}J_{2}^{2}J_{3}^{2}J_{4}^{2}J_{5}^{2}J_{7}J_{10}J_{11}^{2}J_{12}^{2}J_{13}J_{14}}
	\\&	
	+\tfrac{A(-1,-6,0)q^{-3}J_{0}J_{6}^{2}J_{8}^{2}J_{11}}
		{J_{1}^{2}J_{2}^{2}J_{3}^{2}J_{4}^{2}J_{5}^{2}J_{7}J_{9}J_{10}^{2}J_{12}J_{13}^{2}J_{14}^{2}}
	+\tfrac{A(-4,-1,-6)q^{-3}J_{0}J_{6}^{2}J_{8}^{2}}
		{J_{1}^{2}J_{2}^{2}J_{3}^{2}J_{4}^{2}J_{5}^{2}J_{7}J_{10}^{2}J_{11}^{2}J_{12}J_{14}^{2}}
	+\tfrac{A(-1,1,0)q^{-3}J_{0}J_{6}^{2}J_{10}^{2}}
		{J_{1}^{2}J_{2}^{2}J_{3}^{2}J_{4}^{2}J_{5}^{2}J_{8}J_{11}^{2}J_{12}^{2}J_{13}J_{14}^{2}}
	+\tfrac{A(-4,-7,-5)q^{-3}J_{0}J_{6}^{2}J_{9}}
		{J_{1}^{2}J_{2}^{2}J_{3}^{2}J_{4}^{2}J_{5}^{2}J_{8}J_{11}^{2}J_{13}^{2}J_{14}^{2}}
	\\&	
	+\tfrac{A(1,-2,1)q^{-3}J_{0}J_{6}^{2}J_{7}J_{10}^{2}}
		{J_{1}^{2}J_{2}^{2}J_{3}^{2}J_{4}^{2}J_{5}^{2}J_{8}^{2}J_{9}J_{11}J_{12}J_{13}^{2}J_{14}^{2}}
	+\tfrac{A(-3,-2,-2)q^{-3}J_{0}J_{6}^{2}J_{7}J_{10}}
		{J_{1}^{2}J_{2}^{2}J_{3}^{2}J_{4}^{2}J_{5}^{2}J_{8}^{2}J_{11}^{2}J_{12}J_{13}^{2}J_{14}}
	+\tfrac{A(3,2,2)q^{-3}J_{0}J_{6}^{2}J_{7}J_{10}}
		{J_{1}^{2}J_{2}^{2}J_{3}^{2}J_{4}^{2}J_{5}^{2}J_{8}J_{9}^{2}J_{12}^{2}J_{13}^{2}J_{14}}
	+\tfrac{A(3,2,2)q^{-3}J_{0}J_{6}^{2}J_{7}J_{10}^{2}}
		{J_{1}^{2}J_{2}^{2}J_{3}^{2}J_{4}^{2}J_{5}^{2}J_{8}J_{9}^{2}J_{11}^{2}J_{12}^{2}J_{14}^{2}}
	\\&	
	+\tfrac{A(0,-1,1)q^{-3}J_{0}J_{6}^{2}J_{7}J_{12}^{2}}
		{J_{1}^{2}J_{2}^{2}J_{3}^{2}J_{4}^{2}J_{5}^{2}J_{8}J_{10}^{2}J_{11}^{2}J_{13}^{2}J_{14}^{2}}
	+\tfrac{A(0,1,-1)q^{-3}J_{0}J_{6}^{2}J_{7}J_{14}}
		{J_{1}^{2}J_{2}^{2}J_{3}^{2}J_{4}^{2}J_{5}^{2}J_{8}J_{10}J_{11}^{2}J_{12}^{2}J_{13}^{2}}
	+\tfrac{A(2,1,0)q^{-3}J_{0}J_{6}^{2}J_{7}}
		{J_{1}^{2}J_{2}^{2}J_{3}^{2}J_{4}^{2}J_{5}^{2}J_{9}^{2}J_{10}J_{11}J_{12}J_{13}J_{14}}
	\\&	
	+\tfrac{A(-2,-2,0)q^{-2}J_{0}J_{6}^{2}J_{7}J_{8}^{2}}
		{J_{1}^{2}J_{2}^{2}J_{3}^{2}J_{4}^{2}J_{5}^{2}J_{9}^{2}J_{10}^{2}J_{11}J_{12}J_{13}J_{14}^{2}}
	+\tfrac{A(1,1,1)q^{-1}J_{0}J_{6}^{2}J_{7}^{2}J_{8}}
		{J_{1}^{2}J_{2}^{2}J_{3}^{2}J_{4}^{2}J_{5}^{2}J_{9}J_{11}^{2}J_{12}^{2}J_{13}^{2}J_{14}^{2}}
	+\tfrac{A(-1,1,-2)q^{-3}J_{0}J_{8}J_{10}^{2}}
		{J_{1}^{2}J_{2}^{2}J_{3}^{2}J_{4}^{2}J_{5}J_{9}J_{11}^{2}J_{12}^{2}J_{13}J_{14}^{2}}
	+\tfrac{A(-1,-1,0)q^{-3}J_{0}J_{7}J_{10}}
		{J_{1}^{2}J_{2}^{2}J_{3}^{2}J_{4}^{2}J_{5}J_{9}J_{11}^{2}J_{12}J_{13}^{2}J_{14}}
	\\&	
	+\tfrac{A(-1,-1,-1)q^{-1}J_{0}J_{6}J_{7}^{2}J_{8}^{2}}
		{J_{1}^{2}J_{2}^{2}J_{3}^{2}J_{4}^{2}J_{5}J_{9}^{2}J_{10}^{2}J_{11}^{2}J_{12}J_{13}^{2}J_{14}}
	+\tfrac{A(-1,-1,-1)J_{0}J_{6}^{2}J_{7}^{2}J_{8}}
		{J_{1}^{2}J_{2}^{2}J_{3}^{2}J_{4}^{2}J_{5}J_{9}^{2}J_{11}^{2}J_{12}^{2}J_{13}^{2}J_{14}^{2}}
	+\tfrac{A(1,1,0)qJ_{0}J_{5}^{2}J_{6}J_{8}^{2}}
		{J_{1}^{2}J_{2}^{2}J_{3}^{2}J_{4}^{2}J_{7}J_{9}^{2}J_{10}J_{11}^{2}J_{12}J_{13}^{2}J_{14}^{2}}
	,\\
	&R2_3(q)
	=
	\tfrac{A(2,2,1)q^{-3}J_{0}J_{7}^{2}J_{8}^{2}}
		{J_{1}^{2}J_{2}^{2}J_{3}^{2}J_{4}^{2}J_{5}^{2}J_{9}^{2}J_{11}J_{12}J_{13}^{2}J_{14}^{2}}
	+\tfrac{A(-2,0,-2)q^{-3}J_{0}J_{7}^{2}J_{8}^{2}}
		{J_{1}^{2}J_{2}^{2}J_{3}^{2}J_{4}^{2}J_{5}^{2}J_{9}J_{10}J_{11}^{2}J_{12}J_{13}^{2}J_{14}}
	+\tfrac{A(5,0,2)q^{-3}J_{0}J_{6}J_{8}^{2}}
		{J_{1}^{2}J_{2}^{2}J_{3}^{2}J_{4}^{2}J_{5}^{2}J_{10}J_{11}J_{12}J_{13}^{2}J_{14}^{2}}
	\\&	
	+\tfrac{A(-1,-3,0)q^{-3}J_{0}J_{6}J_{8}^{2}J_{9}}
		{J_{1}^{2}J_{2}^{2}J_{3}^{2}J_{4}^{2}J_{5}^{2}J_{10}^{2}J_{11}^{2}J_{12}J_{13}^{2}J_{14}}
	+\tfrac{A(-1,0,-2)q^{-3}J_{0}J_{6}J_{7}J_{10}}
		{J_{1}^{2}J_{2}^{2}J_{3}^{2}J_{4}^{2}J_{5}^{2}J_{11}^{2}J_{12}J_{13}^{2}J_{14}^{2}}
	+\tfrac{A(5,2,4)q^{-3}J_{0}J_{6}J_{7}J_{8}J_{10}}
		{J_{1}^{2}J_{2}^{2}J_{3}^{2}J_{4}^{2}J_{5}^{2}J_{9}^{2}J_{12}^{2}J_{13}^{2}J_{14}^{2}}
	+\tfrac{A(5,4,5)q^{-3}J_{0}J_{6}J_{7}J_{8}}
		{J_{1}^{2}J_{2}^{2}J_{3}^{2}J_{4}^{2}J_{5}^{2}J_{9}J_{11}J_{12}^{2}J_{13}^{2}J_{14}}
	\\&	
	+\tfrac{A(3,4,2)q^{-3}J_{0}J_{6}J_{7}J_{8}}
		{J_{1}^{2}J_{2}^{2}J_{3}^{2}J_{4}^{2}J_{5}^{2}J_{10}J_{11}^{2}J_{12}^{2}J_{13}^{2}}
	+\tfrac{A(-9,-4,-7)q^{-3}J_{0}J_{6}J_{7}J_{8}^{2}}
		{J_{1}^{2}J_{2}^{2}J_{3}^{2}J_{4}^{2}J_{5}^{2}J_{9}^{2}J_{10}J_{11}J_{12}J_{13}J_{14}^{2}}
	+\tfrac{A(-4,-5,0)q^{-3}J_{0}J_{6}J_{7}J_{8}^{2}}
		{J_{1}^{2}J_{2}^{2}J_{3}^{2}J_{4}^{2}J_{5}^{2}J_{9}J_{10}^{2}J_{11}^{2}J_{12}J_{13}J_{14}}
	\\&	
	+\tfrac{A(-3,-3,-1)q^{-3}J_{0}J_{6}J_{7}^{2}J_{10}^{2}}
		{J_{1}^{2}J_{2}^{2}J_{3}^{2}J_{4}^{2}J_{5}^{2}J_{8}J_{9}J_{11}^{2}J_{12}^{2}J_{13}^{2}J_{14}}
	+\tfrac{A(10,7,6)q^{-3}J_{0}J_{6}J_{7}^{2}J_{12}}
		{J_{1}^{2}J_{2}^{2}J_{3}^{2}J_{4}^{2}J_{5}^{2}J_{9}J_{10}J_{11}^{2}J_{13}^{2}J_{14}^{2}}
	+\tfrac{A(5,3,3)q^{-3}J_{0}J_{6}J_{7}^{2}J_{8}}
		{J_{1}^{2}J_{2}^{2}J_{3}^{2}J_{4}^{2}J_{5}^{2}J_{9}^{2}J_{10}^{2}J_{11}J_{13}^{2}J_{14}}
	+\tfrac{A(-5,-2,-4)q^{-3}J_{0}J_{6}J_{7}^{2}J_{8}}
		{J_{1}^{2}J_{2}^{2}J_{3}^{2}J_{4}^{2}J_{5}^{2}J_{9}^{2}J_{10}J_{11}^{2}J_{12}^{2}J_{13}}
	\\&	
	+\tfrac{A(3,2,2)q^{-2}J_{0}J_{6}J_{7}^{2}J_{8}J_{10}}
		{J_{1}^{2}J_{2}^{2}J_{3}^{2}J_{4}^{2}J_{5}^{2}J_{9}J_{11}^{2}J_{12}^{2}J_{13}^{2}J_{14}^{2}}
	+\tfrac{A(1,3,0)q^{-3}J_{0}J_{6}^{2}J_{9}^{2}}
		{J_{1}^{2}J_{2}^{2}J_{3}^{2}J_{4}^{2}J_{5}^{2}J_{7}J_{11}^{2}J_{12}J_{13}^{2}J_{14}^{2}}
	+\tfrac{A(0,4,0)q^{-3}J_{0}J_{6}^{2}J_{8}}
		{J_{1}^{2}J_{2}^{2}J_{3}^{2}J_{4}^{2}J_{5}^{2}J_{7}J_{12}^{2}J_{13}^{2}J_{14}^{2}}
	+\tfrac{A(9,13,4)q^{-3}J_{0}J_{6}^{2}J_{8}J_{9}}
		{J_{1}^{2}J_{2}^{2}J_{3}^{2}J_{4}^{2}J_{5}^{2}J_{7}J_{10}J_{11}J_{12}^{2}J_{13}^{2}J_{14}}
	\\&	
	+\tfrac{A(0,-4,0)q^{-3}J_{0}J_{6}^{2}J_{8}^{2}}
		{J_{1}^{2}J_{2}^{2}J_{3}^{2}J_{4}^{2}J_{5}^{2}J_{7}J_{10}^{2}J_{11}J_{12}J_{13}J_{14}^{2}}
	+\tfrac{A(-1,1,0)q^{-3}J_{0}J_{6}^{2}J_{10}^{2}}
		{J_{1}^{2}J_{2}^{2}J_{3}^{2}J_{4}^{2}J_{5}^{2}J_{8}J_{11}J_{12}^{2}J_{13}^{2}J_{14}^{2}}
	+\tfrac{A(0,1,-1)q^{-3}J_{0}J_{6}^{2}J_{9}J_{10}}
		{J_{1}^{2}J_{2}^{2}J_{3}^{2}J_{4}^{2}J_{5}^{2}J_{8}J_{11}^{2}J_{12}^{2}J_{13}^{2}J_{14}}
	+\tfrac{A(-7,-13,-2)q^{-3}J_{0}J_{6}^{2}}
		{J_{1}^{2}J_{2}^{2}J_{3}^{2}J_{4}^{2}J_{5}^{2}J_{11}^{2}J_{12}J_{13}J_{14}^{2}}
	\\&	
	+\tfrac{A(-12,-11,-6)q^{-3}J_{0}J_{6}^{2}J_{8}}
		{J_{1}^{2}J_{2}^{2}J_{3}^{2}J_{4}^{2}J_{5}^{2}J_{9}J_{10}^{2}J_{13}^{2}J_{14}^{2}}
	+\tfrac{A(4,1,2)q^{-3}J_{0}J_{6}^{2}J_{8}}
		{J_{1}^{2}J_{2}^{2}J_{3}^{2}J_{4}^{2}J_{5}^{2}J_{10}^{2}J_{11}^{2}J_{12}^{2}J_{13}}
	+\tfrac{A(-3,-2,-2)q^{-3}J_{0}J_{6}^{2}J_{7}J_{10}}
		{J_{1}^{2}J_{2}^{2}J_{3}^{2}J_{4}^{2}J_{5}^{2}J_{8}J_{9}J_{11}^{2}J_{12}^{2}J_{13}J_{14}}
	+\tfrac{A(-2,1,-2)q^{-3}J_{0}J_{6}^{2}J_{7}}
		{J_{1}^{2}J_{2}^{2}J_{3}^{2}J_{4}^{2}J_{5}^{2}J_{9}^{2}J_{10}J_{12}J_{13}^{2}J_{14}}
	\\&	
	+\tfrac{A(-10,-12,-5)q^{-2}J_{0}J_{6}^{2}J_{7}J_{8}}
		{J_{1}^{2}J_{2}^{2}J_{3}^{2}J_{4}^{2}J_{5}^{2}J_{9}J_{11}^{2}J_{12}^{2}J_{13}J_{14}^{2}}
	+\tfrac{A(3,3,1)q^{-3}J_{0}J_{6}^{2}J_{7}^{2}J_{10}}
		{J_{1}^{2}J_{2}^{2}J_{3}^{2}J_{4}^{2}J_{5}^{2}J_{8}^{2}J_{9}^{2}J_{11}J_{12}J_{13}^{2}J_{14}}
	+\tfrac{A(7,11,3)q^{-2}J_{0}J_{6}^{2}J_{7}^{2}}
		{J_{1}^{2}J_{2}^{2}J_{3}^{2}J_{4}^{2}J_{5}^{2}J_{9}J_{10}J_{11}^{2}J_{12}J_{13}^{2}J_{14}}
	\\&	
	+\tfrac{A(0,-1,0)q^{-3}J_{0}J_{7}J_{8}^{2}J_{10}}
		{J_{1}^{2}J_{2}^{2}J_{3}^{2}J_{4}^{2}J_{5}J_{6}J_{9}J_{11}^{2}J_{12}J_{13}^{2}J_{14}^{2}}
	+\tfrac{A(-1,1,0)q^{-3}J_{0}J_{8}^{2}J_{9}}
		{J_{1}^{2}J_{2}^{2}J_{3}^{2}J_{4}^{2}J_{5}J_{7}J_{11}^{2}J_{12}J_{13}^{2}J_{14}^{2}}
	+\tfrac{A(-2,-5,-1)q^{-3}J_{0}J_{8}J_{10}}
		{J_{1}^{2}J_{2}^{2}J_{3}^{2}J_{4}^{2}J_{5}J_{11}^{2}J_{12}^{2}J_{13}^{2}J_{14}}
	+\tfrac{A(1,3,0)q^{-3}J_{0}J_{8}^{2}J_{12}}
		{J_{1}^{2}J_{2}^{2}J_{3}^{2}J_{4}^{2}J_{5}J_{10}^{2}J_{11}^{2}J_{13}^{2}J_{14}^{2}}
	\\&	
	+\tfrac{A(0,-1,0)q^{-2}J_{0}J_{7}^{2}J_{8}^{2}}
		{J_{1}^{2}J_{2}^{2}J_{3}^{2}J_{4}^{2}J_{5}J_{9}^{2}J_{10}J_{11}^{2}J_{12}J_{13}^{2}J_{14}}
	+\tfrac{A(1,0,0)J_{0}J_{6}^{2}J_{7}^{2}J_{8}^{2}}
		{J_{1}^{2}J_{2}^{2}J_{3}^{2}J_{4}^{2}J_{5}J_{9}^{2}J_{10}^{2}J_{11}^{2}J_{12}J_{13}^{2}J_{14}^{2}}
	+\tfrac{A(-4,-2,-2)qJ_{0}J_{5}J_{6}^{2}J_{8}}
		{J_{1}^{2}J_{2}^{2}J_{3}^{2}J_{4}^{2}J_{9}^{2}J_{11}^{2}J_{12}^{2}J_{13}^{2}J_{14}^{2}}
	,\\
	&R2_4(q)
	=
	A(0,1,-1)q^{-\frac{4}{7}}S(1,7,\tau/7)
	+A(0,1,-1)q^{-\frac{4}{7}}S(2,7,\tau/7)
	+\tfrac{A(7,-4,7)q^{2}J_{0}J_{12}}
		{J_{1}J_{2}J_{3}J_{4}J_{5}J_{6}J_{7}J_{8}J_{9}J_{10}^{2}J_{11}^{2}J_{13}^{2}}
	\\&
	+\tfrac{A(-14,0,-10)q^{3}J_{0}J_{10}^{2}}
		{J_{1}J_{2}J_{3}J_{4}J_{5}J_{6}J_{7}J_{8}J_{9}J_{11}^{2}J_{12}J_{13}^{2}J_{14}^{2}}
	+\tfrac{A(5,-12,11)q^{2}J_{0}J_{12}}
		{J_{1}J_{2}J_{3}J_{4}J_{5}J_{6}J_{7}J_{8}J_{9}^{2}J_{10}J_{11}J_{13}^{2}J_{14}}
	+\tfrac{A(-6,13,-14)q^{2}J_{0}}
		{J_{1}J_{2}J_{3}J_{4}J_{5}J_{6}J_{7}J_{8}J_{9}^{2}J_{11}^{2}J_{12}J_{13}}
	\\&
	+\tfrac{A(5,13,-3)q^{2}J_{0}J_{12}^{2}}
		{J_{1}J_{2}J_{3}J_{4}J_{5}J_{6}J_{7}J_{8}^{2}J_{9}J_{11}^{2}J_{13}^{2}J_{14}^{2}}
	+\tfrac{A(-3,-2,-2)q^{2}J_{0}J_{10}J_{14}}
		{J_{1}J_{2}J_{3}J_{4}J_{5}J_{6}J_{7}J_{8}^{2}J_{9}J_{11}^{2}J_{12}^{2}J_{13}^{2}}
	+\tfrac{A(-1,1,0)q^{2}J_{0}J_{10}^{2}}
		{J_{1}J_{2}J_{3}J_{4}J_{5}J_{6}J_{7}J_{8}^{2}J_{9}^{2}J_{11}J_{12}^{2}J_{13}^{2}}
	\\&
	+\tfrac{A(3,-4,4)q^{3}J_{0}}
		{J_{1}J_{2}J_{3}J_{4}J_{5}J_{6}J_{7}J_{9}J_{11}^{2}J_{12}^{2}J_{13}^{2}}
	+\tfrac{A(-3,1,-3)q^{2}J_{0}J_{14}^{2}}
		{J_{1}J_{2}J_{3}J_{4}J_{5}J_{6}J_{7}J_{9}^{2}J_{10}^{2}J_{11}^{2}J_{12}^{2}J_{13}}
	+\tfrac{A(-1,1,0)q^{3}J_{0}J_{10}}
		{J_{1}J_{2}J_{3}J_{4}J_{5}J_{6}J_{7}J_{9}^{2}J_{11}J_{12}^{2}J_{13}^{2}J_{14}}
	\\&
	+\tfrac{A(7,-5,8)q^{3}J_{0}J_{8}}
		{J_{1}J_{2}J_{3}J_{4}J_{5}J_{6}J_{7}J_{9}^{2}J_{10}J_{11}^{2}J_{12}J_{13}J_{14}}
	+\tfrac{A(7,0,5)q^{4}J_{0}J_{8}^{2}}
		{J_{1}J_{2}J_{3}J_{4}J_{5}J_{6}J_{7}J_{9}J_{10}J_{11}^{2}J_{12}^{2}J_{13}^{2}J_{14}}
	+\tfrac{A(7,0,5)qJ_{0}J_{14}^{2}}
		{J_{1}J_{2}J_{3}J_{4}J_{5}J_{6}J_{7}^{2}J_{8}J_{9}J_{10}^{2}J_{11}^{2}J_{12}J_{13}}
	\\&
	+\tfrac{A(4,-1,4)q^{2}J_{0}J_{10}}
		{J_{1}J_{2}J_{3}J_{4}J_{5}J_{6}J_{7}^{2}J_{8}J_{9}J_{11}J_{12}J_{13}^{2}J_{14}}
	+\tfrac{A(-11,5,-9)qJ_{0}J_{14}}
		{J_{1}J_{2}J_{3}J_{4}J_{5}J_{6}J_{7}^{2}J_{8}J_{9}^{2}J_{10}J_{11}J_{12}J_{13}}
	+\tfrac{A(4,1,2)qJ_{0}J_{12}}
		{J_{1}J_{2}J_{3}J_{4}J_{5}J_{6}J_{7}^{2}J_{8}J_{9}^{2}J_{10}J_{11}^{2}J_{14}}
	\\&	
	+\tfrac{A(0,-3,1)qJ_{0}J_{12}}
		{J_{1}J_{2}J_{3}J_{4}J_{5}J_{6}J_{7}^{2}J_{8}J_{9}^{2}J_{10}^{2}J_{13}^{2}}
	+\tfrac{A(15,-11,19)q^{2}J_{0}J_{10}^{2}}
		{J_{1}J_{2}J_{3}J_{4}J_{5}J_{6}J_{7}^{2}J_{8}J_{9}^{2}J_{12}J_{13}^{2}J_{14}^{2}}
	+\tfrac{A(-4,9,-9)q^{2}J_{0}}
		{J_{1}J_{2}J_{3}J_{4}J_{5}J_{6}J_{7}^{2}J_{8}J_{11}^{2}J_{12}J_{13}^{2}}
	\\&	
	+\tfrac{A(-7,-1,-4)qJ_{0}J_{12}^{2}}
		{J_{1}J_{2}J_{3}J_{4}J_{5}J_{6}J_{7}^{2}J_{8}^{2}J_{9}J_{10}J_{11}J_{13}^{2}J_{14}}
	+\tfrac{A(1,-1,0)qJ_{0}J_{14}^{2}}
		{J_{1}J_{2}J_{3}J_{4}J_{5}J_{6}J_{7}^{2}J_{8}^{2}J_{9}J_{11}J_{12}^{2}J_{13}^{2}}
	+\tfrac{A(-4,3,-5)qJ_{0}}
		{J_{1}J_{2}J_{3}J_{4}J_{5}J_{6}J_{7}^{2}J_{8}^{2}J_{9}J_{11}^{2}J_{13}}
	\\&
	+\tfrac{A(3,-4,4)J_{0}J_{14}^{2}}
		{J_{1}J_{2}J_{3}J_{4}J_{5}J_{6}J_{7}^{2}J_{8}^{2}J_{9}^{2}J_{10}^{2}J_{11}^{2}}
	+\tfrac{A(-2,12,-9)qJ_{0}J_{12}^{2}}
		{J_{1}J_{2}J_{3}J_{4}J_{5}J_{6}J_{7}^{2}J_{8}^{2}J_{9}^{2}J_{13}^{2}J_{14}^{2}}
	+\tfrac{A(15,-10,16)qJ_{0}J_{10}}
		{J_{1}J_{2}J_{3}J_{4}J_{5}J_{6}J_{7}^{2}J_{8}^{2}J_{9}^{2}J_{11}J_{13}J_{14}}
	\\&
	+\tfrac{A(6,-2,6)qJ_{0}J_{10}J_{14}}
		{J_{1}J_{2}J_{3}J_{4}J_{5}J_{6}J_{7}^{2}J_{8}^{2}J_{9}^{2}J_{12}^{2}J_{13}^{2}}
	+\tfrac{A(-13,-1,-10)q^{2}J_{0}J_{10}^{2}}
		{J_{1}J_{2}J_{3}J_{4}J_{5}J_{6}J_{7}^{2}J_{8}^{2}J_{11}^{2}J_{13}^{2}J_{14}^{2}}
	+\tfrac{A(8,-8,11)q^{2}J_{0}J_{14}}
		{J_{1}J_{2}J_{3}J_{4}J_{5}J_{6}J_{7}^{2}J_{9}J_{10}J_{11}J_{12}^{2}J_{13}^{2}}
	\\&
	+\tfrac{A(5,-12,11)q^{2}J_{0}}
		{J_{1}J_{2}J_{3}J_{4}J_{5}J_{6}J_{7}^{2}J_{9}J_{10}J_{11}^{2}J_{13}J_{14}}
	+\tfrac{A(-15,11,-19)q^{2}J_{0}}
		{J_{1}J_{2}J_{3}J_{4}J_{5}J_{6}J_{7}^{2}J_{9}^{2}J_{11}J_{13}J_{14}^{2}}
	+\tfrac{A(5,-7,8)q^{2}J_{0}}
		{J_{1}J_{2}J_{3}J_{4}J_{5}J_{6}J_{7}^{2}J_{9}^{2}J_{12}^{2}J_{13}^{2}}
	\\&
	+\tfrac{A(1,5,-2)q^{2}J_{0}J_{8}}
		{J_{1}J_{2}J_{3}J_{4}J_{5}J_{6}J_{7}^{2}J_{9}^{2}J_{10}^{2}J_{11}J_{12}J_{13}}
	+\tfrac{A(2,12,-6)q^{3}J_{0}J_{12}}
		{J_{1}J_{2}J_{3}J_{4}J_{5}J_{6}J_{8}J_{9}^{2}J_{11}^{2}J_{13}^{2}J_{14}^{2}}
	+\tfrac{A(-7,0,-5)q^{2}J_{0}J_{14}}
		{J_{1}J_{2}J_{3}J_{4}J_{5}J_{6}J_{8}^{2}J_{9}^{2}J_{10}J_{11}^{2}J_{13}^{2}}
	\\&	
	+\tfrac{A(-4,0,-3)q^{4}J_{0}J_{10}^{2}}
		{J_{1}J_{2}J_{3}J_{4}J_{5}J_{6}J_{9}^{2}J_{11}^{2}J_{12}^{2}J_{13}^{2}J_{14}^{2}}
	+\tfrac{A(3,-1,3)qJ_{0}J_{14}^{2}}
		{J_{1}J_{2}J_{3}J_{4}J_{5}J_{6}^{2}J_{7}J_{8}J_{9}J_{10}J_{11}^{2}J_{12}J_{13}^{2}}
	+\tfrac{A(2,-6,5)qJ_{0}J_{14}}
		{J_{1}J_{2}J_{3}J_{4}J_{5}J_{6}^{2}J_{7}J_{8}J_{9}^{2}J_{11}J_{12}J_{13}^{2}}
	\\&
	+\tfrac{A(-13,5,-12)qJ_{0}J_{10}}
		{J_{1}J_{2}J_{3}J_{4}J_{5}J_{6}^{2}J_{7}J_{8}^{2}J_{9}J_{11}^{2}J_{13}^{2}}
	+\tfrac{A(0,-3,1)J_{0}J_{14}^{2}}
		{J_{1}J_{2}J_{3}J_{4}J_{5}J_{6}^{2}J_{7}J_{8}^{2}J_{9}^{2}J_{10}J_{11}^{2}J_{13}}
	+\tfrac{A(-3,4,-4)J_{0}J_{12}^{2}J_{14}}
		{J_{1}J_{2}J_{3}J_{4}J_{5}J_{6}^{2}J_{7}J_{8}^{2}J_{9}^{2}J_{10}^{2}J_{11}J_{13}^{2}}
	\\&
	+\tfrac{A(-5,2,-4)q^{2}J_{0}}
		{J_{1}J_{2}J_{3}J_{4}J_{5}J_{6}^{2}J_{7}J_{9}J_{11}^{2}J_{13}^{2}J_{14}}
	+\tfrac{A(0,1,-1)q^{-1}J_{0}J_{12}^{2}J_{14}}
		{J_{1}J_{2}J_{3}J_{4}J_{5}J_{6}^{2}J_{7}^{2}J_{8}^{2}J_{9}^{2}J_{10}^{2}J_{11}^{2}}
	+\tfrac{A(0,1,-1)q^{2}J_{0}J_{14}}
		{J_{1}J_{2}J_{3}J_{4}J_{5}J_{7}J_{8}^{2}J_{9}^{2}J_{10}^{2}J_{11}^{2}J_{13}}
	\\&
	+\tfrac{A(-3,-1,-3)q^{3}J_{0}J_{10}^{2}}
		{J_{1}J_{2}J_{3}J_{4}J_{5}J_{7}^{2}J_{8}^{2}J_{11}^{2}J_{12}^{2}J_{13}^{2}J_{14}}
	+\tfrac{A(4,0,3)q^{5}J_{0}J_{8}}
		{J_{1}J_{2}J_{3}J_{4}J_{5}J_{9}^{2}J_{10}J_{11}^{2}J_{12}J_{13}^{2}J_{14}^{2}}
	+\tfrac{A(3,-4,4)q^{3}J_{0}J_{6}J_{14}^{2}}
		{J_{1}J_{2}J_{3}J_{4}J_{5}J_{7}J_{8}^{2}J_{9}^{2}J_{10}^{2}J_{11}^{2}J_{12}^{2}J_{13}}
	,\\
	&R2_5(q)
	=
	\tfrac{A(21,15,5)q^{-3}J_{0}J_{7}^{2}J_{8}^{2}}
		{J_{1}^{2}J_{2}^{2}J_{3}^{2}J_{4}^{2}J_{5}^{2}J_{9}J_{11}^{2}J_{12}J_{13}^{2}J_{14}^{2}}
	+\tfrac{A(-13,-8,-2)q^{-3}J_{0}J_{6}J_{8}^{2}J_{9}}
		{J_{1}^{2}J_{2}^{2}J_{3}^{2}J_{4}^{2}J_{5}^{2}J_{10}J_{11}^{2}J_{12}J_{13}^{2}J_{14}^{2}}
	+\tfrac{A(-4,0,-2)q^{-3}J_{0}J_{6}J_{7}J_{8}J_{10}}
		{J_{1}^{2}J_{2}^{2}J_{3}^{2}J_{4}^{2}J_{5}^{2}J_{9}J_{11}J_{12}^{2}J_{13}^{2}J_{14}^{2}}
	\\&	
	+\tfrac{A(-4,-6,-1)q^{-3}J_{0}J_{6}J_{7}J_{8}}
		{J_{1}^{2}J_{2}^{2}J_{3}^{2}J_{4}^{2}J_{5}^{2}J_{11}^{2}J_{12}^{2}J_{13}^{2}J_{14}}
	+\tfrac{A(23,13,9)q^{-3}J_{0}J_{6}J_{7}J_{8}^{2}}
		{J_{1}^{2}J_{2}^{2}J_{3}^{2}J_{4}^{2}J_{5}^{2}J_{9}J_{10}J_{11}^{2}J_{12}J_{13}J_{14}^{2}}
	+\tfrac{A(-25,-15,-6)q^{-3}J_{0}J_{6}J_{7}^{2}J_{8}}
		{J_{1}^{2}J_{2}^{2}J_{3}^{2}J_{4}^{2}J_{5}^{2}J_{9}^{2}J_{10}J_{11}J_{13}^{2}J_{14}^{2}}
	+\tfrac{A(-22,-17,-9)q^{-3}J_{0}J_{6}J_{7}^{2}J_{8}}
		{J_{1}^{2}J_{2}^{2}J_{3}^{2}J_{4}^{2}J_{5}^{2}J_{9}^{2}J_{11}^{2}J_{12}^{2}J_{13}J_{14}}
	\\&	
	+\tfrac{A(-13,-6,-6)q^{-3}J_{0}J_{6}J_{7}^{2}J_{8}}
		{J_{1}^{2}J_{2}^{2}J_{3}^{2}J_{4}^{2}J_{5}^{2}J_{9}J_{10}^{2}J_{11}^{2}J_{13}^{2}J_{14}}
	+\tfrac{A(-5,-2,-3)q^{-3}J_{0}J_{6}^{2}J_{8}J_{9}}
		{J_{1}^{2}J_{2}^{2}J_{3}^{2}J_{4}^{2}J_{5}^{2}J_{7}J_{11}J_{12}^{2}J_{13}^{2}J_{14}^{2}}
	+\tfrac{A(-2,-1,-2)q^{-3}J_{0}J_{6}^{2}J_{8}J_{9}^{2}}
		{J_{1}^{2}J_{2}^{2}J_{3}^{2}J_{4}^{2}J_{5}^{2}J_{7}J_{10}J_{11}^{2}J_{12}^{2}J_{13}^{2}J_{14}}
	+\tfrac{A(9,6,3)q^{-3}J_{0}J_{6}^{2}J_{9}J_{10}^{2}}
		{J_{1}^{2}J_{2}^{2}J_{3}^{2}J_{4}^{2}J_{5}^{2}J_{8}J_{11}^{2}J_{12}^{2}J_{13}^{2}J_{14}^{2}}
	\\&	
	+\tfrac{A(-21,-11,-7)q^{-3}J_{0}J_{6}^{2}J_{8}}
		{J_{1}^{2}J_{2}^{2}J_{3}^{2}J_{4}^{2}J_{5}^{2}J_{9}J_{11}J_{12}^{2}J_{13}J_{14}^{2}}
	+\tfrac{A(9,8,2)q^{-3}J_{0}J_{6}^{2}J_{8}}
		{J_{1}^{2}J_{2}^{2}J_{3}^{2}J_{4}^{2}J_{5}^{2}J_{10}^{2}J_{11}J_{13}^{2}J_{14}^{2}}
	+\tfrac{A(-8,-6,-1)q^{-3}J_{0}J_{6}^{2}J_{8}}
		{J_{1}^{2}J_{2}^{2}J_{3}^{2}J_{4}^{2}J_{5}^{2}J_{10}J_{11}^{2}J_{12}^{2}J_{13}J_{14}}
	+\tfrac{A(15,10,5)q^{-3}J_{0}J_{6}^{2}J_{8}^{2}J_{11}}
		{J_{1}^{2}J_{2}^{2}J_{3}^{2}J_{4}^{2}J_{5}^{2}J_{9}^{2}J_{10}^{2}J_{12}J_{13}^{2}J_{14}^{2}}
	\\&	
	+\tfrac{A(11,3,5)q^{-3}J_{0}J_{6}^{2}J_{8}^{2}}
		{J_{1}^{2}J_{2}^{2}J_{3}^{2}J_{4}^{2}J_{5}^{2}J_{9}J_{10}^{2}J_{11}^{2}J_{12}J_{14}^{2}}
	+\tfrac{A(3,2,2)q^{-3}J_{0}J_{6}^{2}J_{7}J_{10}^{2}}
		{J_{1}^{2}J_{2}^{2}J_{3}^{2}J_{4}^{2}J_{5}^{2}J_{8}J_{9}J_{11}^{2}J_{12}^{2}J_{13}J_{14}^{2}}
	+\tfrac{A(10,10,2)q^{-3}J_{0}J_{6}^{2}J_{7}}
		{J_{1}^{2}J_{2}^{2}J_{3}^{2}J_{4}^{2}J_{5}^{2}J_{8}J_{11}^{2}J_{13}^{2}J_{14}^{2}}
	+\tfrac{A(-15,-14,-4)q^{-3}J_{0}J_{6}^{2}J_{7}}
		{J_{1}^{2}J_{2}^{2}J_{3}^{2}J_{4}^{2}J_{5}^{2}J_{9}^{2}J_{12}J_{13}^{2}J_{14}^{2}}
	\\&	
	+\tfrac{A(19,10,7)q^{-3}J_{0}J_{6}^{2}J_{7}}
		{J_{1}^{2}J_{2}^{2}J_{3}^{2}J_{4}^{2}J_{5}^{2}J_{9}J_{10}J_{11}J_{12}J_{13}^{2}J_{14}}
	+\tfrac{A(4,1,0)q^{-3}J_{0}J_{6}^{2}J_{7}J_{8}}
		{J_{1}^{2}J_{2}^{2}J_{3}^{2}J_{4}^{2}J_{5}^{2}J_{9}^{2}J_{10}^{2}J_{11}J_{13}J_{14}^{2}}
	+\tfrac{A(0,1,2)q^{-3}J_{0}J_{6}^{2}J_{7}J_{8}}
		{J_{1}^{2}J_{2}^{2}J_{3}^{2}J_{4}^{2}J_{5}^{2}J_{9}^{2}J_{10}^{2}J_{12}^{2}J_{13}^{2}}
	\\&	
	+\tfrac{A(-7,-8,-2)q^{-2}J_{0}J_{6}^{2}J_{7}J_{8}^{2}}
		{J_{1}^{2}J_{2}^{2}J_{3}^{2}J_{4}^{2}J_{5}^{2}J_{9}J_{10}^{2}J_{11}J_{12}J_{13}^{2}J_{14}^{2}}
	+\tfrac{A(6,4,1)q^{-3}J_{0}J_{6}^{2}J_{7}^{2}J_{10}^{2}}
		{J_{1}^{2}J_{2}^{2}J_{3}^{2}J_{4}^{2}J_{5}^{2}J_{8}^{2}J_{9}^{2}J_{11}J_{12}J_{13}^{2}J_{14}^{2}}
	+\tfrac{A(-9,-7,-2)q^{-3}J_{0}J_{6}^{2}J_{7}^{2}J_{10}}
		{J_{1}^{2}J_{2}^{2}J_{3}^{2}J_{4}^{2}J_{5}^{2}J_{8}^{2}J_{9}J_{11}^{2}J_{12}J_{13}^{2}J_{14}}
	\\&	
	+\tfrac{A(2,2,1)q^{-3}J_{0}J_{6}^{2}J_{7}^{2}}
		{J_{1}^{2}J_{2}^{2}J_{3}^{2}J_{4}^{2}J_{5}^{2}J_{8}J_{9}^{2}J_{11}^{2}J_{13}J_{14}^{2}}
	+\tfrac{A(9,7,2)q^{-3}J_{0}J_{6}^{2}J_{7}^{2}}
		{J_{1}^{2}J_{2}^{2}J_{3}^{2}J_{4}^{2}J_{5}^{2}J_{8}J_{9}^{2}J_{11}J_{12}^{2}J_{13}^{2}}
	+\tfrac{A(6,4,1)q^{-3}J_{0}J_{6}^{2}J_{7}^{2}J_{12}^{2}}
		{J_{1}^{2}J_{2}^{2}J_{3}^{2}J_{4}^{2}J_{5}^{2}J_{8}J_{9}J_{10}^{2}J_{11}^{2}J_{13}^{2}J_{14}^{2}}
	+\tfrac{A(-6,-4,-1)q^{-3}J_{0}J_{6}^{2}J_{7}^{2}J_{14}}
		{J_{1}^{2}J_{2}^{2}J_{3}^{2}J_{4}^{2}J_{5}^{2}J_{8}J_{9}J_{10}J_{11}^{2}J_{12}^{2}J_{13}^{2}}
	\\&	
	+\tfrac{A(7,8,2)q^{-2}J_{0}J_{6}^{2}J_{7}^{2}J_{8}}
		{J_{1}^{2}J_{2}^{2}J_{3}^{2}J_{4}^{2}J_{5}^{2}J_{9}J_{10}^{2}J_{11}^{2}J_{12}^{2}J_{13}^{2}}
	+\tfrac{A(-4,-4,0)q^{-3}J_{0}J_{8}J_{10}^{2}}
		{J_{1}^{2}J_{2}^{2}J_{3}^{2}J_{4}^{2}J_{5}J_{11}^{2}J_{12}^{2}J_{13}^{2}J_{14}^{2}}
	+\tfrac{A(11,8,4)q^{-3}J_{0}J_{7}J_{8}J_{10}^{2}}
		{J_{1}^{2}J_{2}^{2}J_{3}^{2}J_{4}^{2}J_{5}J_{9}^{2}J_{11}^{2}J_{12}^{2}J_{13}J_{14}^{2}}
	+\tfrac{A(-4,-1,-2)q^{-3}J_{0}J_{7}J_{8}}
		{J_{1}^{2}J_{2}^{2}J_{3}^{2}J_{4}^{2}J_{5}J_{9}J_{11}^{2}J_{13}^{2}J_{14}^{2}}
	\\&	
	+\tfrac{A(1,1,0)q^{-3}J_{0}J_{7}J_{8}^{2}}
		{J_{1}^{2}J_{2}^{2}J_{3}^{2}J_{4}^{2}J_{5}J_{9}J_{10}^{2}J_{11}^{2}J_{12}J_{13}^{2}}
	+\tfrac{A(1,-1,0)q^{-3}J_{0}J_{6}J_{7}J_{10}}
		{J_{1}^{2}J_{2}^{2}J_{3}^{2}J_{4}^{2}J_{5}J_{8}J_{9}J_{11}^{2}J_{12}^{2}J_{13}^{2}}
	+\tfrac{A(-5,-2,-3)q^{-3}J_{0}J_{6}^{2}J_{8}J_{9}}
		{J_{1}^{2}J_{2}^{2}J_{3}^{2}J_{4}^{2}J_{5}J_{7}^{2}J_{10}^{2}J_{11}^{2}J_{13}J_{14}^{2}}
	+\tfrac{A(-2,-1,-2)J_{0}J_{5}J_{6}J_{8}^{2}}
		{J_{1}^{2}J_{2}^{2}J_{3}^{2}J_{4}^{2}J_{9}^{2}J_{10}J_{11}^{2}J_{12}J_{13}^{2}J_{14}^{2}}
	\\&	
	+\tfrac{A(5,2,3)qJ_{0}J_{5}^{2}J_{6}^{2}J_{8}^{2}}
		{J_{1}^{2}J_{2}^{2}J_{3}^{2}J_{4}^{2}J_{7}^{2}J_{9}J_{10}^{2}J_{11}^{2}J_{12}J_{13}^{2}J_{14}^{2}}	
	,\\
	&R2_6(q)
	=
	A(-1,1,0)q^{-\frac{6}{7}}S(4,7,\tau/7)
	+A(1,-1,0)q^{-\frac{6}{7}}S(6,7,\tau/7)
	+\tfrac{A(-30,4,-15)q^{-1}J_{0}J_{6}^{2}J_{7}J_{8}}
		{J_{1}^{2}J_{2}^{2}J_{3}^{2}J_{4}^{2}J_{5}J_{9}J_{11}^{2}J_{12}^{2}J_{13}^{2}J_{14}^{2}}
	\\&
	+\tfrac{A(10,2,5)q^{-1}J_{0}J_{6}^{2}J_{7}^{2}J_{8}}
		{J_{1}^{2}J_{2}^{2}J_{3}^{2}J_{4}^{2}J_{5}J_{9}^{2}J_{10}^{2}J_{11}^{2}J_{13}^{2}J_{14}^{2}}
	+\tfrac{A(-22,3,-5)q^{-1}J_{0}J_{6}J_{7}J_{8}^{2}}
		{J_{1}^{2}J_{2}^{2}J_{3}^{2}J_{4}^{2}J_{9}^{2}J_{10}^{2}J_{11}^{2}J_{12}J_{13}^{2}J_{14}}
	+\tfrac{A(7,-1,1)q^{-1}J_{0}J_{6}^{2}J_{8}^{2}}
		{J_{1}^{2}J_{2}^{2}J_{3}^{2}J_{4}^{2}J_{7}J_{9}J_{10}^{2}J_{11}J_{12}J_{13}^{2}J_{14}^{2}}
	+\tfrac{A(34,-7,21)q^{-1}J_{0}J_{6}^{2}}
		{J_{1}^{2}J_{2}^{2}J_{3}^{2}J_{4}^{2}J_{9}J_{11}^{2}J_{12}J_{13}^{2}J_{14}^{2}}
	\\&
	+\tfrac{A(27,4,11)q^{-1}J_{0}J_{6}^{2}J_{8}}
		{J_{1}^{2}J_{2}^{2}J_{3}^{2}J_{4}^{2}J_{9}^{2}J_{10}J_{11}J_{12}^{2}J_{13}^{2}J_{14}}
	+\tfrac{A(6,-1,0)q^{-1}J_{0}J_{6}^{2}J_{8}}
		{J_{1}^{2}J_{2}^{2}J_{3}^{2}J_{4}^{2}J_{9}J_{10}^{2}J_{11}^{2}J_{12}^{2}J_{13}^{2}}
	+\tfrac{A(-3,-2,-2)q^{-1}J_{0}J_{6}^{2}J_{7}J_{10}}
		{J_{1}^{2}J_{2}^{2}J_{3}^{2}J_{4}^{2}J_{8}J_{9}^{2}J_{11}^{2}J_{12}^{2}J_{13}^{2}J_{14}}
	+\tfrac{A(14,1,7)q^{-1}J_{0}J_{6}^{2}J_{7}J_{12}}
		{J_{1}^{2}J_{2}^{2}J_{3}^{2}J_{4}^{2}J_{9}^{2}J_{10}^{2}J_{11}^{2}J_{13}^{2}J_{14}^{2}}
	\\&
	+\tfrac{A(17,-5,7)J_{0}J_{6}^{2}J_{7}J_{8}}
		{J_{1}^{2}J_{2}^{2}J_{3}^{2}J_{4}^{2}J_{9}^{2}J_{11}^{2}J_{12}^{2}J_{13}^{2}J_{14}^{2}}
	+\tfrac{A(-3,0,-3)q^{-1}J_{0}J_{5}J_{6}J_{8}^{2}}
		{J_{1}^{2}J_{2}^{2}J_{3}^{2}J_{4}^{2}J_{7}^{2}J_{10}J_{11}^{2}J_{12}J_{13}^{2}J_{14}^{2}}
	+\tfrac{A(-17,1,-10)q^{-1}J_{0}J_{5}J_{6}J_{8}J_{10}}
		{J_{1}^{2}J_{2}^{2}J_{3}^{2}J_{4}^{2}J_{7}J_{9}^{2}J_{11}J_{12}^{2}J_{13}^{2}J_{14}^{2}}
	+\tfrac{A(-8,0,-6)q^{-1}J_{0}J_{5}J_{6}J_{8}}
		{J_{1}^{2}J_{2}^{2}J_{3}^{2}J_{4}^{2}J_{7}J_{9}J_{11}^{2}J_{12}^{2}J_{13}^{2}J_{14}}
	\\&
	+\tfrac{A(2,0,2)q^{-1}J_{0}J_{5}J_{6}J_{8}^{2}}
		{J_{1}^{2}J_{2}^{2}J_{3}^{2}J_{4}^{2}J_{7}J_{9}^{2}J_{10}J_{11}^{2}J_{12}J_{13}J_{14}^{2}}
	+\tfrac{A(-46,0,-21)q^{-1}J_{0}J_{5}J_{6}J_{8}}
		{J_{1}^{2}J_{2}^{2}J_{3}^{2}J_{4}^{2}J_{9}^{2}J_{10}^{2}J_{11}^{2}J_{13}^{2}J_{14}}
	+\tfrac{A(-21,-2,-10)q^{-1}J_{0}J_{5}J_{6}^{2}J_{8}}
		{J_{1}^{2}J_{2}^{2}J_{3}^{2}J_{4}^{2}J_{7}^{2}J_{9}^{2}J_{11}J_{12}^{2}J_{13}J_{14}^{2}}
	+\tfrac{A(-18,-2,-7)q^{-1}J_{0}J_{5}J_{6}^{2}J_{8}}
		{J_{1}^{2}J_{2}^{2}J_{3}^{2}J_{4}^{2}J_{7}^{2}J_{9}J_{10}^{2}J_{11}J_{13}^{2}J_{14}^{2}}
	\\&
	+\tfrac{A(5,2,5)q^{-1}J_{0}J_{5}J_{6}^{2}J_{8}}
		{J_{1}^{2}J_{2}^{2}J_{3}^{2}J_{4}^{2}J_{7}^{2}J_{9}J_{10}J_{11}^{2}J_{12}^{2}J_{13}J_{14}}
	+\tfrac{A(21,2,10)q^{-1}J_{0}J_{5}J_{6}^{2}J_{8}^{2}}
		{J_{1}^{2}J_{2}^{2}J_{3}^{2}J_{4}^{2}J_{7}^{2}J_{9}^{2}J_{10}^{2}J_{11}^{2}J_{12}J_{14}^{2}}
	+\tfrac{A(1,-1,0)q^{-1}J_{0}J_{5}J_{6}^{2}J_{10}^{2}}
		{J_{1}^{2}J_{2}^{2}J_{3}^{2}J_{4}^{2}J_{7}J_{8}J_{9}^{2}J_{11}^{2}J_{12}^{2}J_{13}J_{14}^{2}}
	\\&
	+\tfrac{A(-12,2,-3)q^{-1}J_{0}J_{5}J_{6}^{2}}
		{J_{1}^{2}J_{2}^{2}J_{3}^{2}J_{4}^{2}J_{7}J_{9}^{2}J_{10}J_{11}J_{12}J_{13}^{2}J_{14}}
	+\tfrac{A(11,-2,3)q^{-1}J_{0}J_{5}J_{6}^{2}}
		{J_{1}^{2}J_{2}^{2}J_{3}^{2}J_{4}^{2}J_{7}J_{9}J_{10}^{2}J_{11}^{2}J_{12}J_{13}^{2}}
	+\tfrac{A(-20,3,-10)J_{0}J_{5}J_{6}^{2}J_{8}^{2}}
		{J_{1}^{2}J_{2}^{2}J_{3}^{2}J_{4}^{2}J_{7}J_{9}^{2}J_{10}^{2}J_{11}J_{12}J_{13}^{2}J_{14}^{2}}
	\\&
	+\tfrac{A(3,2,2)q^{-1}J_{0}J_{5}J_{6}^{2}J_{10}}
		{J_{1}^{2}J_{2}^{2}J_{3}^{2}J_{4}^{2}J_{8}^{2}J_{9}^{2}J_{11}^{2}J_{12}J_{13}^{2}J_{14}}
	+\tfrac{A(29,-1,12)q^{-1}J_{0}J_{5}J_{6}^{2}J_{12}^{2}}
		{J_{1}^{2}J_{2}^{2}J_{3}^{2}J_{4}^{2}J_{8}J_{9}^{2}J_{10}^{2}J_{11}^{2}J_{13}^{2}J_{14}^{2}}
	+\tfrac{A(0,-1,1)q^{-1}J_{0}J_{5}J_{6}^{2}J_{14}}
		{J_{1}^{2}J_{2}^{2}J_{3}^{2}J_{4}^{2}J_{8}J_{9}^{2}J_{10}J_{11}^{2}J_{12}^{2}J_{13}^{2}}
	+\tfrac{A(7,0,3)q^{-1}J_{0}J_{5}^{2}J_{8}J_{10}^{2}}
		{J_{1}^{2}J_{2}^{2}J_{3}^{2}J_{4}^{2}J_{7}^{2}J_{9}J_{11}^{2}J_{12}^{2}J_{13}^{2}J_{14}^{2}}
	\\&
	+\tfrac{A(29,-2,13)q^{-1}J_{0}J_{5}^{2}J_{8}}
		{J_{1}^{2}J_{2}^{2}J_{3}^{2}J_{4}^{2}J_{7}J_{9}^{2}J_{11}^{2}J_{13}^{2}J_{14}^{2}}
	+\tfrac{A(1,0,0)q^{-1}J_{0}J_{5}^{2}J_{6}J_{8}}
		{J_{1}^{2}J_{2}^{2}J_{3}^{2}J_{4}^{2}J_{7}^{2}J_{9}^{2}J_{10}J_{11}J_{12}^{2}J_{13}^{2}}
	+\tfrac{A(-7,0,-3)J_{0}J_{5}^{2}J_{6}J_{8}^{2}}
		{J_{1}^{2}J_{2}^{2}J_{3}^{2}J_{4}^{2}J_{7}^{2}J_{9}J_{10}J_{11}^{2}J_{12}J_{13}^{2}J_{14}^{2}}
	+\tfrac{A(-29,2,-13)q^{-1}J_{0}J_{5}^{2}J_{6}J_{12}}
		{J_{1}^{2}J_{2}^{2}J_{3}^{2}J_{4}^{2}J_{7}J_{9}^{2}J_{10}^{2}J_{11}^{2}J_{13}^{2}J_{14}}
	\\&
	+\tfrac{A(7,0,3)q^{-1}J_{0}J_{5}^{2}J_{6}^{2}J_{12}}
		{J_{1}^{2}J_{2}^{2}J_{3}^{2}J_{4}^{2}J_{7}^{2}J_{8}^{2}J_{9}J_{11}^{2}J_{13}^{2}J_{14}^{2}}
	+\tfrac{A(2,-1,1)q^{-1}J_{0}J_{6}J_{7}J_{8}^{2}}
		{J_{1}^{2}J_{2}^{2}J_{3}^{2}J_{4}J_{5}^{2}J_{10}J_{11}^{2}J_{12}^{2}J_{13}^{2}J_{14}^{2}}
	+\tfrac{A(-1,1,0)q^{-1}J_{0}J_{6}J_{7}^{2}J_{8}^{2}}
		{J_{1}^{2}J_{2}^{2}J_{3}^{2}J_{4}J_{5}^{2}J_{9}^{2}J_{10}J_{11}^{2}J_{12}^{2}J_{13}J_{14}^{2}}
	\\&
	+\tfrac{A(5,1,3)q^{-1}J_{0}J_{6}^{2}J_{8}^{2}}
		{J_{1}^{2}J_{2}^{2}J_{3}^{2}J_{4}J_{5}^{2}J_{10}^{2}J_{11}^{2}J_{12}^{2}J_{13}J_{14}^{2}}
	+\tfrac{A(5,-2,2)q^{-1}J_{0}J_{6}^{2}J_{7}J_{8}^{2}}
		{J_{1}^{2}J_{2}^{2}J_{3}^{2}J_{4}J_{5}^{2}J_{9}^{2}J_{10}^{2}J_{11}^{2}J_{12}^{2}J_{14}^{2}}
	+\tfrac{A(-5,-5,-4)q^{-1}J_{0}J_{6}^{2}J_{7}^{2}}
		{J_{1}^{2}J_{2}^{2}J_{3}^{2}J_{4}J_{5}^{2}J_{8}J_{9}J_{11}^{2}J_{12}J_{13}^{2}J_{14}^{2}}
	\\&
	+\tfrac{A(-1,1,0)q^{-1}J_{0}J_{6}^{2}J_{7}^{2}}
		{J_{1}^{2}J_{2}^{2}J_{3}^{2}J_{4}J_{5}^{2}J_{9}^{2}J_{10}J_{11}J_{12}^{2}J_{13}^{2}J_{14}}
	+\tfrac{A(10,2,5)J_{0}J_{6}^{2}J_{7}^{2}J_{8}^{2}}
		{J_{1}^{2}J_{2}^{2}J_{3}^{2}J_{4}J_{5}^{2}J_{9}^{2}J_{10}^{2}J_{11}J_{12}^{2}J_{13}^{2}J_{14}^{2}}
	+\tfrac{A(-2,-2,-3)q^{-1}J_{0}J_{7}^{2}J_{8}}
		{J_{1}^{2}J_{2}^{2}J_{3}^{2}J_{4}J_{5}J_{9}^{2}J_{11}^{2}J_{12}J_{13}^{2}J_{14}^{2}}
	\\&
	+\tfrac{A(1,0,0)q^{-1}J_{0}J_{7}^{2}J_{8}^{2}}
		{J_{1}^{2}J_{2}^{2}J_{3}^{2}J_{4}J_{5}J_{9}^{2}J_{10}^{2}J_{11}^{2}J_{12}^{2}J_{13}^{2}}
	+\tfrac{A(-11,4,-1)q^{-1}J_{0}J_{6}J_{8}}
		{J_{1}^{2}J_{2}^{2}J_{3}^{2}J_{4}J_{5}J_{10}J_{11}^{2}J_{12}J_{13}^{2}J_{14}^{2}}
	+\tfrac{A(-1,-3,-3)q^{-1}J_{0}J_{6}J_{8}^{2}}
		{J_{1}^{2}J_{2}^{2}J_{3}^{2}J_{4}J_{5}J_{9}J_{10}^{2}J_{11}J_{12}^{2}J_{13}^{2}J_{14}}
	+\tfrac{A(3,3,1)q^{-1}J_{0}J_{6}^{2}J_{10}^{2}}
		{J_{1}^{2}J_{2}^{2}J_{3}^{2}J_{4}J_{5}J_{8}^{2}J_{11}^{2}J_{12}^{2}J_{13}^{2}J_{14}^{2}}
	\\&
	+\tfrac{A(-2,-2,-1)qJ_{0}J_{6}J_{7}J_{8}^{2}}
		{J_{1}^{2}J_{2}^{2}J_{3}^{2}J_{4}J_{9}^{2}J_{10}J_{11}^{2}J_{12}^{2}J_{13}^{2}J_{14}^{2}}
	+\tfrac{A(3,2,3)q^{2}J_{0}J_{5}J_{6}^{2}J_{8}^{2}}
		{J_{1}^{2}J_{2}^{2}J_{3}^{2}J_{4}J_{7}J_{9}J_{10}^{2}J_{11}^{2}J_{12}^{2}J_{13}^{2}J_{14}^{2}}
.
\end{align*}
Due to the length of the identities, we perform the calculations in Maple and
only sketch the proof here. We note that while the 
calculations are quite long in terms of their statements, they are quite quick
to compute. A Maple file containing calculations relevant to proving Theorem
\ref{TheoremDissection} can be found in the publications section of the 
author's website,
\url{http://people.oregonstate.edu/~jennichr/#research}.

We multiply both sides of the identity
in Theorem \ref{TheoremDissection} by $q^{-\frac{1}{8}}$, in doing so we find
that the exponents simplify correctly to give
an identity between $\mathcal{R}2(1,7;\tau)$, various $\mathcal{S}(k,7;\tau)$,
and various quotients of $f_{196,7k}(\tau)$.
When then multiply both sides by $\frac{\eta(4\tau)\eta(\tau)}{\eta(2\tau)}$
and take the difference of both sides, we call this difference
$LHS-RHS$. We are to show $0=LHS-RHS$.
Upon inspection we find that the resulting generalized eta quotients
are all modular forms of weight $1$ on $\Gamma_0(196)\cap\Gamma_1(28)$
by Corollary \ref{CorollaryProductsModular}
and more importantly 
the roots of unity have been chosen correctly so
that part (3) of Theorem \ref{TheoremModularity} applies 
to give that the remaining terms also constitute a weight 1 modular form
on $\Gamma_0(196)\cap\Gamma_1(28)$.
We then choose one of the generalized eta quotients, call it $g_1$,
so that $\frac{LHS-RHS}{g_1}$ is a modular function on
$\Gamma_0(196)\cap\Gamma_1(28)$. We then verify that
$0=\frac{LHS-RHS}{g_1}$ according to the valence formula.

To state the valence formula, we need to recall the concept of the order
at a point with 
respect to a subgroup.
Suppose $f$ is a modular function on some congruence subgroup 
$\Gamma\subset\SLTwo$.
Suppose $B=\SmallMatrix{\alpha}{\beta}{\gamma}{\delta}\in\SLTwo$,
we then have a cusp $\zeta=B(\infty)=\frac{\alpha}{\gamma}$.
We let $ord(f;\zeta)$ denote the invariant order of $f$ at
$\zeta$. We define the width of $\zeta$ with respect to $\Gamma$
as $width_\Gamma(\zeta):=w$, where $w$ is the least positive integer such that
$B\SmallMatrix{1}{w}{0}{1}B^{-1}\in\Gamma$. We then define
the order of $f$ at $\zeta$ with respect to $\Gamma$ as
$ORD_\Gamma(f;\zeta)=ord(f;\zeta) width_\Gamma(\zeta)$.
For $z\in\mathcal{H}$ we let $ord(f;z)$ denote the order of $f$ at $z$ as a
meromorphic function. We then define the order of $f$ at $z$ with respect to
$\Gamma$ as $ORD_\Gamma(f;z)=ord(f;z)/m$ where $m$ is the order of $z$ as a
fixed point of $\Gamma$ (so $m=1$, $2$, or $3$).

The valence formula, for modular functions can be stated as follows.
Suppose $f$ is a modular function that is not the zero function 
and $\mathcal{D}\subset\mathcal{H}\cup\mathbb{Q}\cup\{\infty\}$ is a
fundamental domain for the action of $\Gamma$ on $\mathcal{H}$ along 
with a complete set of inequivalent cusps for the action,
then
\begin{align*}
	\sum_{\zeta \in \mathcal{D}} ORD_\Gamma(f;\zeta) = 0
.
\end{align*}

A complete set of inequivalent cusps, along with their widths, 
for $\Gamma_0(196)\cap\Gamma_1(28)$ is
given in Table \ref{Table1}. We let $\mathcal{D}$ denote these cusps along with a fundamental region of the 
action of $\Gamma$.

\begin{table}\caption{}\label{Table1}
$\renewcommand{\arraystretch}{1.1}	
\setlength\arraycolsep{3pt}
	\begin{array}{l|cccccccccccccccccccccc}
		\mbox{cusp}&
		0& \tfrac{1}{28}& \tfrac{3}{80}& \tfrac{2}{53}& \tfrac{1}{26}& 
		\tfrac{3}{77}& \tfrac{2}{51}& \tfrac{1}{24}& \tfrac{2}{47}& \tfrac{3}{70}& 
		\tfrac{2}{45}& \tfrac{5}{112}& \tfrac{1}{22}& \tfrac{1}{21}& \tfrac{1}{20}& 
		\tfrac{5}{98}& \tfrac{3}{56}& \tfrac{1}{18}& \tfrac{2}{35}& \tfrac{1}{16}&
	 	\tfrac{1}{15}& \tfrac{1}{14}
		\\		
		\mbox{width}&				
		196& 1& 49& 196& 98& 4& 196& 49& 196& 2& 
		196& 1& 98& 4& 49& 2& 1& 98& 4& 49& 
		196& 2
		\\			
		\hline
		\mbox{cusp}&
		\tfrac{5}{63}& \tfrac{1}{12}& \tfrac{3}{35}& \tfrac{8}{91}& \tfrac{5}{56}& 
	 	\tfrac{2}{21}& \tfrac{11}{112}& \tfrac{5}{49}& \tfrac{8}{77}& \tfrac{3}{28}& 
	 	\tfrac{5}{42}& \tfrac{11}{91}& \tfrac{6}{49}& \tfrac{1}{8}& \tfrac{8}{63}& 
		\tfrac{9}{70}& \tfrac{12}{91}& \tfrac{1}{7}& \tfrac{12}{77}& \tfrac{10}{63}& 
	 	\tfrac{9}{56}& \tfrac{6}{35}
		\\
		\mbox{width}&
		4& 49& 4& 4& 1& 4& 1& 4& 4& 1& 
		2& 4& 4& 49& 4& 2& 4& 4& 4& 4& 
		1& 4 
		\\
		\hline			
		\mbox{cusp}&
	 	\tfrac{11}{63}& \tfrac{16}{91}& \tfrac{5}{28}& \tfrac{13}{70}& \tfrac{4}{21}& 
		\tfrac{10}{49}& \tfrac{13}{63}& \tfrac{22}{105}& \tfrac{3}{14}& \tfrac{11}{49}& 
		\tfrac{19}{84}& \tfrac{13}{56}& \tfrac{18}{77}& \tfrac{5}{21}& \tfrac{20}{77}& 
		\tfrac{51}{196}& \tfrac{15}{56}& \tfrac{2}{7}& \tfrac{25}{84}& \tfrac{23}{77}& 
		\tfrac{17}{56}& \tfrac{13}{42}
		\\
		\mbox{width}&
		4& 4& 1& 2& 4& 4& 4& 4& 2& 4&
		1& 1& 4& 4& 4& 1& 1& 4& 1& 4& 
		1& 2 
		\\
		\hline			
		\mbox{cusp}&
		\tfrac{11}{35}& \tfrac{9}{28}& \tfrac{19}{56}& \tfrac{31}{91}& \tfrac{67}{196}&
		\tfrac{12}{35}& \tfrac{29}{84}& \tfrac{69}{196}& \tfrac{5}{14}& \tfrac{75}{196}& 
		\tfrac{11}{28}& \tfrac{17}{42}& \tfrac{23}{56}& \tfrac{81}{196}& \tfrac{29}{70}& 
		\tfrac{3}{7}& \tfrac{43}{98}& \tfrac{25}{56}& \tfrac{45}{98}& \tfrac{13}{28}& 
		\tfrac{10}{21}& \tfrac{27}{56}
		\\
		\mbox{width}&
		4& 1& 1& 4& 1& 4& 1& 1& 2& 1& 
		1& 2& 1& 1& 2& 4& 2& 1& 2& 1& 
		4& 1 
		\\
		\hline			
		\mbox{cusp}&
		\tfrac{41}{84}& \tfrac{43}{84}& \tfrac{29}{56}& \tfrac{15}{28}& \tfrac{23}{42}& 
		\tfrac{4}{7}& \tfrac{37}{63}& \tfrac{29}{49}& \tfrac{25}{42}& \tfrac{17}{28}& 
		\tfrac{13}{21}& \tfrac{22}{35}& \tfrac{9}{14}& \tfrac{55}{84}& \tfrac{19}{28}& 
		\tfrac{29}{42}& \tfrac{39}{56}& \tfrac{59}{84}& \tfrac{5}{7}& \tfrac{37}{49}& 
		\tfrac{53}{70}& \tfrac{65}{84} 
		\\
		\mbox{width}&
		1& 1& 1& 1& 2& 4& 4& 4& 2& 1& 
		4& 4& 2& 1& 1& 2& 1& 1& 4& 4& 
		2& 1 
		\\
		\hline			
		\mbox{cusp}&
		\tfrac{11}{14}& \tfrac{23}{28}& \tfrac{6}{7}& \tfrac{61}{70}& \tfrac{25}{28}& 
		\tfrac{101}{112}& \tfrac{13}{14}& \tfrac{107}{112}& \tfrac{27}{28}& \infty&
		&&&&&&&&&&
		&
		\\
		\mbox{width}&
		2& 1& 4& 2& 1& 1& 2& 1& 1& 1&
		&&&&&&&&&&
		&
	\end{array}
$\end{table}

We note $\frac{LHS-RHS}{g_1}$ has no poles in $\mathcal{H}$, but it may have zeros in $\mathcal{H}$. 
We take a lower bound on the
orders at the non-infinite cusps by taking the minimum order of each of the 
individual summands, which we compute with
Propositions 
\ref{PropositionOrderForR2Tilde}, \ref{PropositionOrderForSTilde}, and \ref{PropProductOrders}.
This lower bound yields
\begin{align*}
	\sum_{\zeta\in\mathcal{D}} ORD_{\Gamma}\left( \frac{LHS-RHS}{g_1}; \zeta \right)
	\ge	
	ord\left( \frac{LHS-RHS}{g_1}   ,\infty\right)
	-522.
\end{align*}
However, we can expand $\frac{LHS-RHS}{g_1}$ as a series in $q$
and find the coefficients of $\frac{LHS-RHS}{g_1}$ are zero to at least 
$q^{523}$. Thus
\begin{align*}
	\sum_{\zeta\in\mathcal{D}} ORD_{\Gamma}\left(\frac{LHS-RHS}{g_1};\zeta\right)
	\ge	
	1,
\end{align*}
and so $\frac{LHS-RHS}{g_1}$ must be identically zero by the valence formula. 
This establishes Theorem \ref{TheoremDissection}.

\bibliographystyle{abbrv}
\bibliography{M2Ref}

\end{document}